\definecolor{darkblue}{rgb}{0,0,0.6}
\newtheorem{proposition}{Proposition}[section]
\newtheorem{theorem}[proposition]{Theorem}
\newtheorem{corollary}[proposition]{Corollary}
\newtheorem{lemma}[proposition]{Lemma}
\theoremstyle{definition}
\newtheorem{definition}[proposition]{Definition}
\newtheorem{question}[proposition]{Question}
\newtheorem{example}[proposition]{Example}
\newtheorem{construction}[proposition]{Construction}
\theoremstyle{remark}
\newtheorem{remark}[proposition]{Remark}
\newtheorem*{remark*}{Remark}
\numberwithin{equation}{section}
\newcommand{\sm}{\setminus}
\newcommand{\Q}{\mathbb{Q}}
\newcommand{\R}{\mathbb{R}}
\newcommand{\Z}{\mathbb{Z}}
\newcommand{\Id}{\operatorname{Id}}
\newcommand{\pt}{\operatorname{pt}}
\newcommand{\id}{\operatorname{Id}}
\newcommand{\wh}{\widehat}
\let\int\relax
 \newcommand{\int}{\mathring}
\newcommand{\ol}{\overline}
\newcommand{\wt}{\widetilde}
\newcommand{\CP}{\mathbb{C}P}
\newcommand{\RP}{\mathbb{R}P}
\newcommand{\smfrac}[2]{\mbox{\footnotesize$\displaystyle\frac{#1}{#2}$}} 
\def\HMf{\widehat{HM}}
    \def\HMt{%
       \setbox0=\hbox{$\widehat{\mathit{HM}}$}
       \setbox1=\hbox{$\mathit{HM}$}
       \dimen0=1.1\ht0
       \advance\dimen0 by 1.17\ht1
       \smash{\mskip2mu\raise\dimen0\rlap{%
          \begin{turn}{180}
              {$\widehat{\phantom{\mathit{HM}}}$}
           \end{turn}} \mskip-2mu
                \mathit{HM}
    }{\vphantom{\widehat{\mathit{HM}}}}{}}
\DeclareMathOperator{\Hom}{Hom}
\DeclareMathOperator{\Diff}{Diff}
\DeclareMathOperator{\Emb}{Emb}
\DeclareMathOperator{\Homeo}{Homeo}
\DeclareMathOperator{\FSW}{FSW}
\DeclareMathOperator{\Wh}{Wh}
\DeclareMathOperator{\spin}{spin}
\newcommand{\spineQ}{Q^{\rm sp}}
\begin{document}

\title[Corks for diffeomorphisms]{Corks for exotic diffeomorphisms}

 \author[V.\ Krushkal]{Vyacheslav Krushkal}
 \address{Department of Mathematics, University of Virginia, Charlottesville, VA 22903, USA}
 \email{krushkal@virginia.edu}

\author[A.\ Mukherjee]{Anubhav Mukherjee}
\address{Department of Mathematics, Princeton University, Princeton, NJ 08540, USA}
\email{anubhavmaths@princeton.edu}

 \author[M.\ Powell]{Mark Powell}
 \address{School of Mathematics and Statistics, University of Glasgow, United Kingdom}
 \email{mark.powell@glasgow.ac.uk}

 \author[T.\ Warren]{\\Terrin Warren}
 \address{Department of Mathematics, California Polytechnic State University, San Luis Obispo, CA, 93407, USA }  \email{terrinwarren@gmail.com}

\def\subjclassname{\textup{2020} Mathematics Subject Classification}
\expandafter\let\csname subjclassname@1991\endcsname=\subjclassname
\subjclass{
57K40, 
57N37, 
57R58. 
}

\begin{abstract}
We prove a localization theorem for exotic diffeomorphisms, showing that every diffeomorphism of a compact simply-connected 4-manifold that is isotopic to the identity after stabilizing with one copy of $S^2 \times S^2$,  is smoothly isotopic to a diffeomorphism that is supported on a contractible submanifold. 
For those that require more than one copy of $S^2 \times S^2$, we prove that the diffeomorphism can be isotoped to one that is supported in a submanifold homotopy equivalent to a wedge of 2-spheres, with null-homotopic inclusion map. 
We investigate the implications of these results by applying them to known exotic diffeomorphisms. 
\end{abstract}

\maketitle

\section{Introduction}\label{sec:intro}

This article concerns exotic diffeomorphisms of simply-connected 4-mani-folds, and our goal is to investigate to what extent they can be localized.

Let $X$ be a compact, simply-connected, smooth 4-manifold. 
We say that a diffeomorphism $f \colon X \to X$ is \emph{exotic} if it is topologically but not smoothly isotopic to the identity~$\Id_X$. If $X$ has a nonempty boundary then we will assume that diffeomorphisms and isotopies are \emph{boundary fixing}, that is they restrict to $\Id_{\partial X}$, for all time in the case of isotopies.  We say that a diffeomorphism $f \colon X \rightarrow X$ is \emph{supported on a submanifold $C \subseteq X$} if $f$ restricts to the identity on the complement of its interior $X \sm \mathring{C}$. We say that $f$ is \emph{$n$-stably isotopic to $\Id$} if $f\# \id \colon X\#^n S^2\times S^2 \to X\#^n S^2\times S^2$ is smoothly isotopic to the identity. 
A diffeomorphism $f \colon X \to X$ is topologically isotopic to $\Id$ if and only if $f$ is $n$-stably isotopic to $\Id$ for some~$n$; see \cref{sec: stable diffeomorphisms} for details and citations. 

The Cork Theorem~\cites{CFHS,Matveyev}, states that any exotic pair of compact, simply-connected, smooth 4-manifolds $X$ and $X'$ are related by a \emph{cork twist}, i.e.\ there is a compact, contractible, smooth codimension zero submanifold $C\subseteq X$, the eponymous cork, with an involution $\tau \colon \partial C \to \partial C$, such that $X \sm \mathring{C} \cup_\tau C \cong X'$. The first cork was discovered by Akbulut~\cite{Akbulut-cork}, and they have been studied extensively, e.g.~\cites{Akbulut-Matveyev,Akbulut-Yasui-08,Akbulut-Yasui-09,Akbulut-Yasui-13,Gompf-infinite-order-corks}. The Cork Theorem was extended to any finite collection of smooth structures by Melvin-Schwartz~\cite{Melvin-Schwartz}. 

Our main result is an analogue of the cork theorem for diffeomorphisms. 

\begin{restatable}[Diffeomorphism cork theorem]{theorem}{oneeyethm}
\label{thm:one-eye-thm}
 Let $X$ be a compact, simply-connected, smooth 4-manifold, and let $f \colon X \rightarrow X$ be a boundary fixing diffeomorphism such that $f$ is 1-stably isotopic to $\Id$.  
    Then there exists a compact, contractible submanifold $C \subseteq X$, and a boundary fixing isotopy of $f$ to a  diffeomorphism $f' \colon X \to X$ that is supported on $C$. 
\end{restatable}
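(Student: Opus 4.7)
The plan is to adapt the strategy of the classical Cork Theorem of Curtis--Freedman--Hsiang--Stong~\cite{CFHS} and Matveyev~\cite{Matveyev} to the setting of diffeomorphisms, using the given 1-stable smooth isotopy as the source of cancellable handles in place of an h-cobordism. Fix a boundary-fixing smooth isotopy $\Phi = (\Phi_t)_{t\in [0,1]}$ from $\Phi_0 = f\#\id$ to $\Phi_1 = \id$ on $X\#S^2\times S^2$. I will interpret this isotopy as a 5-dimensional object (a pseudo-isotopy, equivalently a level-preserving self-diffeomorphism of $(X\#S^2\times S^2)\times I$ whose boundary behavior records $f$), and then seek to deform it, rel endpoints, so that its support projects into a contractible codimension-zero submanifold of $X$.

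\textbf{Step 1.} I would first exploit the fact that $\Phi_t$ is the identity on the stabilization summand $(S^2\times S^2)\setminus \mathring{B}^4$ at both $t=0$ and $t=1$: this lets me extract from the summand a cancellable pair of $(2,3)$-handles in the associated 5-dimensional pseudo-isotopy, which in turn furnishes a pair of geometrically dual $2$-spheres that serve as the workhorses for subsequent manipulations. \textbf{Step 2.} Next, I would put the pseudo-isotopy into generic form and perform a Cerf-theoretic (graphic) analysis, cancelling remaining birth/death pairs against this dual-sphere pair. Simply-connectedness of $X$ ensures that the algebraic intersection data needed for cancellation lifts to geometric Whitney moves in the 5-dimensional picture. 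The target state is a pseudo-isotopy that agrees with the product pseudo-isotopy outside a 5-dimensional region $V\subset X\times I$ which deformation retracts onto a contractible $2$-complex. \textbf{Step 3.} Setting $C := V\cap (X\times\{0\})$ gives a compact contractible codimension-zero submanifold of $X$, and the controlled pseudo-isotopy restricts, on the complement of $V$, to an isotopy of $f$ to a diffeomorphism $f'$ supported on $C$.

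\textbf{Main obstacle.} The principal difficulty is performing the 5-dimensional handle cancellations while keeping the residual support inside a region that retracts onto a contractible complex. For homological reasons a contractible 4-manifold cannot contain an embedded pair of geometrically dual 2-spheres, so the stabilization's dual-sphere pair must be fully \emph{absorbed} by the cancellation procedure rather than surviving as embedded features of~$C$. This is precisely where the 1-stable hypothesis does its work: one copy of $S^2\times S^2$ provides exactly enough room to absorb all obstructions, whereas with more stabilizations one can only guarantee a support homotopy equivalent to a wedge of $2$-spheres (as in the companion statement of the abstract). Realizing the absorption smoothly will require the simply-connected Whitney trick in the 5-dimensional pseudo-isotopy together with smooth sphere-manipulation tools in the ambient 4-manifold (of Gabai 4D-light-bulb flavor) to avoid leaving spurious 2-handles in $C$.
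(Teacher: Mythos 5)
Your overall plan --- reinterpret the $1$-stable isotopy as a $5$-dimensional object, analyse its Cerf graphic, and adapt the CFHS/Matveyev cork argument to localize the support to a contractible submanifold --- is indeed the paper's strategy. But the proposal leaves the two hardest steps as black boxes, and in one place the framing is not quite right.

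\textbf{Conversion to a one-eyed pseudo-isotopy of $X$.} You write that the isotopy $\Phi$ on $X\#S^2\times S^2$ is ``equivalently a level-preserving self-diffeomorphism of $(X\#S^2\times S^2)\times I$'' and that you will deform it so its ``support projects into'' a submanifold of $X$. A level-preserving self-diffeomorphism of $(X\#S^2\times S^2)\times I$ is not a pseudo-isotopy of anything that has $X$ as its fibre, and there is no natural projection from $(X\#S^2\times S^2)\times I$ to $X$. The precise mechanism the paper uses is Gabai's theorem (\cref{lemma:n-stable-isotopy-iff-n-eyed-PI}), which says that $f\#\id$ being isotopic to $\Id$ on $X\#S^2\times S^2$ is \emph{equivalent} to $f$ admitting a pseudo-isotopy on $X\times I$ whose Cerf graphic has exactly one eye. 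Your Step~1 gestures at the right idea (the stabilization summand corresponds to a cancelling $(2,3)$-pair), but the logical structure is backwards: after Gabai's theorem there is exactly one eye, so there are no ``remaining birth/death pairs'' to cancel against it. The single eye \emph{is} the whole picture, and the content of the argument is to decompose that one eye.

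\textbf{Controlling the fundamental group of the support.} Your Step~2 asserts that the support of the modified pseudo-isotopy will retract onto a contractible $2$-complex, but gives no mechanism. This is the heart of the matter, and it is not automatic. The paper proceeds by introducing the Quinn core $Q$ (a regular neighbourhood of $A\cup B\cup V\cup W$ in the middle-middle level $M_{1/2}\cong X\#S^2\times S^2$), imposing Quinn's arc condition (\cref{lemma:quinn-arc-condition}) so that in the one-eyed case the boundary arcs of the finger/Whitney discs on $A$ and $B$ are embedded arcs, and then computing $Q\simeq S^2\vee S^2\vee\bigvee^m S^1$. Crucially, $\pi_1(Q)$ is a nontrivial free group in general, so one must add $1$-handles and then $2$-handles from the complement (\cref{lemma:additional2handles}) to kill it, obtaining a simply-connected $R\simeq S^2\vee S^2$. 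Only then does surgering out $A$ and $B$ (via the flow through the index-$2$ and index-$3$ critical points) produce a contractible $C\subseteq X$. Without this $\pi_1$-killing step, the resulting $C$ would merely have $H_*(C)\cong H_*(\mathrm{pt})$ but could have nontrivial $\pi_1$, and would not be contractible. Your proposal has nothing playing this role. Relatedly, the ``absorption'' picture in your obstacle paragraph is not what happens: the $A,B$ spheres are not absorbed into $C$, they are removed from the picture by surgery when flowing down from the middle level to $X\times\{0\}$; what would spoil contractibility of $C$ is a leftover free fundamental group (which you do not address) or, in the many-eyed case, extra $S^2$-summands coming from intersection circles between $A_i$ and $B_j$ for $i\neq j$ (which is exactly what the $C_{ij}$ in \cref{lemma:homotopy-type-of-Q} count).

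So: same strategy, but the two load-bearing ingredients --- Gabai's reduction to a one-eyed pseudo-isotopy and the Quinn-core plus handle-augmentation argument for making the support simply-connected --- are each only vaguely indicated or missing, and the proposal as written does not close the argument.
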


A more detailed statement, \cref{thm:one-eye-thm-full-version}, strengthens the result in two ways. Namely, we show that any finite collection of diffeomorphisms that are $1$-stably isotopic to $\Id$ can be localized to a compact, contractible submanifold $C$. Moreover,  $C$ can be chosen to be a 4-manifold that admits a handle decomposition into 0-, 1-, and 2-handles.

A \emph{diff-cork} is a pair $(C,g)$ consisting of a smooth, compact, contractible $4$-manifold $C$ together with a diffeomorphism $g \colon C \to C$ such that $g|_{\partial C} = \Id_{\partial C}$. In the terminology of \cref{thm:one-eye-thm}, $g = f'|_C$. Note that for any diff-cork, the diffeomorphism $g\colon C \to C$ is topologically isotopic to the identity by \cites{Perron,Quinn:isotopy}.

\begin{remark}
The contractible 4-manifold $C$ of a diff-cork is not a cork in the sense of \cites{Kirby_corks,Akbulut-Matveyev,Akbulut-Yasui-08}, as $C$ does not come with the data of an involution of the boundary, although this would arguably be unnatural to expect in this context.
\end{remark}

The proof of the classical cork theorem involves analyzing the structure of an $h$-cobordism from $X$ to $X'$, decomposing it into a contractible sub-$h$-cobordism and a product cobordism. Our proof of \cref{thm:one-eye-thm} is somewhat analogous, where a pseudo-isotopy plays the role of the $h$-cobordism. 
Recall that a \emph{pseudo-isotopy} of $X$ is a diffeomorphism $F \colon X \times I \to X \times I$ such that $F$ restricts to the identity on $X \times \{0\} \cup \partial X \times I$. We say that $f:= F|_{X \times \{1\}}$ is \emph{pseudo-isotopic} to $\Id$.  

A diffeomorphism of a compact, simply-connected 4-manifold that is stably isotopic to $\Id$ is pseudo-isotopic to $\id$; see \cref{thm:TFAE-isotopy-notations} for a detailed discussion and citations.
A result by Gabai~\cite{gabai20223spheres} implies that if the diffeomorphism is 1-stably  isotopic to the identity then the pseudo-isotopy  can be assumed to have one eye. This  means that it admits an associated Cerf graphic with one eye, corresponding to a birth and subsequent death of a single pair of critical points of indices 2 and 3 (see \cref{section:preliminaries} for further details). So to prove \cref{thm:one-eye-thm}  it suffices to study one-eyed pseudo-isotopies.  
We analyze the structure of a one-eyed pseudo-isotopy and show that it can be decomposed into a pseudo-isotopy supported on $C \times I \subseteq X \times I$, where $C$ is a contractible submanifold of $X$. 

\begin{remark}
    Our method can be contrasted with that of Gay~\cite{gay2021diffeomorphisms} (in the case of~$S^4$) and Krannich-Kupers~\cite{krannich-kupers} (for arbitrary simply-connected 4-manifolds). They characterized exotic diffeomorphisms via embedding spaces. Their proofs also employed pseudo-isotopy theory, but the outcomes are rather different.
\end{remark}

When $f$ must be stabilized by more than one copy of $S^2 \times S^2$ 
in order to smoothly trivialize it, we do not know whether there is a cork theorem.  We can instead localize the diffeomorphism to a 4-manifold homotopy equivalent to a wedge of 2-spheres, whose inclusion in $X$ is null-homotopic.

\begin{restatable}{theorem}{genbarbellthm}\label{thm:gen-barbell-cork-thm}
    Let $X$ be a smooth, compact, simply-connected 4-manifold, and let $f \colon X\to X$ be a diffeomorphism  that is $n$-stably isotopic to identity. Then there exists $k\leq {n(n-1)}$ and a compact 4-manifold $\mathcal{B}$ and a smooth embedding $\iota \colon \mathcal{B} \to X$, such that $\iota \colon \mathcal{B} \to X$ is null-homotopic, $\vee^k S^2 \simeq \mathcal{B}$, and such that $f$ is smoothly isotopic to a diffeomorphism supported on~$\iota(\mathcal{B})$.
\end{restatable}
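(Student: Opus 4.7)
The plan is to extend the one-eye argument underlying Theorem~\ref{thm:one-eye-thm} to pseudo-isotopies with $n$ eyes. By the discussion in \cref{sec: stable diffeomorphisms} combined with Gabai's result~\cite{gabai20223spheres}, the hypothesis that $f$ is $n$-stably isotopic to $\Id$ implies the existence of a pseudo-isotopy $F \colon X \times I \to X \times I$ from $\Id_X$ to $f$ whose Cerf graphic has at most $n$ eyes, each corresponding to a birth-death pair of critical points of indices $2$ and $3$. Note that the case $n = 1$ is precisely \cref{thm:one-eye-thm}, which produces a contractible submanifold, consistent with $k \leq n(n-1) = 0$.

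First, I would apply the localization procedure from the proof of \cref{thm:one-eye-thm} to each individual eye. If the eyes $E_1, \ldots, E_n$ were mutually independent in the Cerf graphic, this would yield $n$ compact contractible submanifolds $C_i \subseteq X$, each supporting the portion of the isotopy arising from $E_i$. The essential difficulty for $n > 1$ is that the eyes interact: as the parameter is varied, the belt $2$-sphere of the $2$-handle associated with eye $E_i$ can generically intersect the attaching $2$-sphere of the $3$-handle associated with eye $E_j$ for $j \neq i$, and these intersections must be absorbed into the support.

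Next, I would enlarge each $C_i$ by tubing in regular neighborhoods of the interaction $2$-spheres arising from ordered pairs $(i,j)$ with $i \neq j$. Each such ordered pair contributes at most one essential $2$-sphere class to the support of the composite diffeomorphism, for a total of at most $n(n-1)$ spheres. Amalgamating the contractible cores $C_i$ with these intersection spheres along the connecting tubes produces a single compact codimension-zero submanifold $\mathcal{B} \subseteq X$ that contains the support of $f$ after a further isotopy. Because each $C_i$ is contractible and the added pieces are $2$-spheres attached by tubes to a contractible base, the homotopy type of $\mathcal{B}$ collapses to a wedge $\vee^k S^2$ with $k \leq n(n-1)$. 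To check that $\iota \colon \mathcal{B} \to X$ is null-homotopic, it suffices to show each constituent sphere bounds in $X$: each pair-$(i,j)$ sphere lies in a small neighborhood of $C_i \cup C_j$ together with its connecting tubes, a set which is itself null-homotopic in $X$ since the $C_i$ are contractible.

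The main obstacle I anticipate is the second step: one must verify simultaneously that (i) $f$ can genuinely be isotoped into $\mathcal{B}$, (ii) the pairwise interactions produce only $2$-sphere summands in the homotopy type and no higher-dimensional cells, and (iii) the sphere count is bounded by $n(n-1)$ rather than a looser bound such as $n^2$. A natural approach to control these at once is to induct on $n$, peeling off one eye at a time from the Cerf graphic and tracking how the previously constructed support absorbs each new interaction; the inductive step should be close in spirit to the one-eye argument, with the complication that the ambient manifold is now $X$ rather than its interior of a contractible region.
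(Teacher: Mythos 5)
Your proposal has the right shape—work in the middle-middle level, account for $A_i\cap B_j$ interactions for $i\neq j$, surger out the $A,B$-spheres—but it has a genuine gap at the most delicate point: the null-homotopy of $\iota\colon\mathcal{B}\to X$. Your reason (each interaction sphere ``lies in a small neighborhood of $C_i\cup C_j$ \dots which is itself null-homotopic in $X$ since the $C_i$ are contractible'') does not hold: the union of two contractible submanifolds need not be null-homotopic in $X$, and more to the point, the interaction sphere lives in the middle-middle level $M_{1/2}\cong X\#^nS^2\times S^2$, where it defines an element $\theta_{i,j}\in\pi_2(X)$ (via the projection $M_{1/2}\to X$) that \emph{a priori} can be any class. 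Nothing about contractibility of the one-eye cores forces $\theta_{i,j}=0$. The paper makes $\theta_{i,j}=0$ an explicit \emph{achievement}, not a consequence: it concatenates the given $n$-eyed pseudo-isotopy with a specially engineered trivial pseudo-isotopy carrying $-\theta_{i,j}$ (\cref{lemma:trivial-PI-with prescribed-pi2-element}), then uses the sum square move and the additivity of these $\pi_2$-classes under the move (\cref{lemma:pi_2-elements-adding}) to cancel. Without this mechanism the inclusion is not null-homotopic and the theorem fails.

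A second, related gap is the bound $k\leq n(n-1)$. You assert that each ordered pair $(i,j)$ with $i\neq j$ contributes ``at most one essential $2$-sphere,'' but in general the boundary arcs of the finger and Whitney discs pairing $A_i\cap B_j$ intersections form arbitrarily many circles ($C_{ij}$ of them), each of which gives a $2$-sphere summand in the homotopy type of the Quinn core (\cref{lemma:homotopy-type-of-Q}). Getting $C_{ij}\leq 1$ is again a deformation that must be earned, via the sum square move (\cref{lemma:assume-one-circle-using-sum-square}). Finally, the overall architecture also differs: you propose to peel off eyes one at a time, producing disjoint contractible cores $C_1,\dots,C_n$ and then amalgamating, but when eyes interact the one-eye localization cannot be applied independently—the $A_i,B_i$ data are entangled with $A_j,B_j$ through the Cerf family. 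The paper instead builds a single global Quinn core for the full nested-eye family, adds $1$- and $2$-handles (\cref{lemma:additional2handles}) to kill $\pi_1$, forms the sub-$h$-cobordism $Y$ by attaching the two collections of $3$-handles, and reads off $\mathcal{B}$ as an end of $Y$.
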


It would be interesting to know whether this result is optimal. To this end, consider a $4$-dimensional Dehn twist $\delta$ along the separating $3$-sphere in $K_3\#K_3$. 
This diffeomorphism is topologically isotopic to the identity \cites{Kreck-isotopy-classes,Perron,Quinn:isotopy,GGHKP},  not smoothly isotopic to the identity \cite{Kronheimer-Mrowka-K3}, and  moreover not 1-stably isotopic to the identity \cite{Lin-dehn-twist-stabn}. This leads to the following question. 

\begin{question}
    Can $\delta$ be isotoped to a diffeomorphism of $K_3 \# K_3$ supported on a contractible 4-manifold?  
\end{question}

More examples of non 1-stably isotopic exotic diffeomorphisms were constructed in \cites{Konno-Mallick-Taniguchi}, and the same question applies to these diffeomorphisms as well.

\subsection*{Applications of the Diffeomorphism Cork Theorem} 
The first examples of exotic diffeomorphisms of simply-connected 4-manifolds are due to Ruberman~\cites{ruberman-isotopy,Ruberman-polynomial-invariant}. These examples were shown to be $1$-stably isotopic to the identity by Auckly-Kim-Melvin-Ruberman~\cite{AKMR-stable-isotopy}*{Theorem~C}. 
We check in \cref{Example:BK,Example:Auckly} that this also holds for the examples of Baraglia-Konno from~\cite{Baraglia-Konno} and those of Auckly from~\cite{auckly:stable-surfaces}.  

Then, as a consequence of \cref{thm:one-eye-thm}, each of these examples  admits a diff-cork $(C,g)$, and $g$ is an exotic diffeomorphism of~$C$. %

We note that the existence of exotic diffeomorphisms on contractible 4-manifolds was first shown by Konno-Mallick-Taniguchi~\cite{Konno-Mallick-Taniguchi}. 
However our construction allows us to take finite collections of Ruberman's diffeomorphisms, and obtain the following result.

\begin{restatable}{theorem}{Zmdiffcork}\label{thm:Zm-in-a-diff-cork} For each $m \geq 1$ there exists a contractible, compact, smooth 4-manifold~$\mathcal{C}_m$ and a collection $\{g_1,\dots,g_m\}$ of boundary-fixing diffeomorphisms of~$\mathcal{C}_m$ that generate a subgroup of $\pi_0 \Diff_{\partial}(\mathcal{C}_m)$ that abelianizes to~$\Z^m$. 
\end{restatable}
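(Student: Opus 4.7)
The plan is to take finitely many of Ruberman's exotic diffeomorphisms of some compact, simply-connected smooth 4-manifold $X$, simultaneously localize them to a single contractible submanifold via the strengthened form of \cref{thm:one-eye-thm} noted after its statement, and then detect linear independence of the restrictions using a Ruberman-type invariant.

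First, fix $m$ of Ruberman's exotic diffeomorphisms $f_1, \dots, f_m$ of $X$. By \cite{AKMR-stable-isotopy}*{Theorem~C}, each $f_i$ is 1-stably isotopic to the identity, and the polynomial Donaldson-type invariants of \cite{Ruberman-polynomial-invariant} supply a homomorphism
\[
\Phi \colon \pi_0 \Diff(X) \longrightarrow \Z^m,
\]
whose values on the chosen diffeomorphisms can, by Ruberman's linear independence results, be arranged to give $\Phi([f_i]) = e_i$, the $i$-th standard basis vector. In particular $[f_1], \dots, [f_m]$ are $\Z$-linearly independent in $\pi_0 \Diff(X)$.

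Next, apply the strengthened version of \cref{thm:one-eye-thm}, namely \cref{thm:one-eye-thm-full-version}, to the finite collection $\{f_1,\dots,f_m\}$. This yields a single compact, contractible submanifold $\mathcal{C}_m \subseteq X$ together with boundary-fixing smooth isotopies from each $f_i$ to a diffeomorphism $f_i' \colon X \to X$ that restricts to the identity on $X \sm \mathring{\mathcal{C}_m}$. Setting $g_i := f_i'|_{\mathcal{C}_m}$ defines a boundary-fixing self-diffeomorphism of $\mathcal{C}_m$, topologically isotopic to $\Id_{\mathcal{C}_m}$ by \cites{Perron,Quinn:isotopy}.

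Finally, let $G := \langle [g_1],\dots,[g_m]\rangle \subseteq \pi_0 \Diff_\partial(\mathcal{C}_m)$. Since $G$ is generated by $m$ elements, its abelianization $G^{\mathrm{ab}}$ is a quotient of $\Z^m$ via a surjection $p\colon \Z^m \twoheadrightarrow G^{\mathrm{ab}}$ with $p(e_i) = [g_i]^{\mathrm{ab}}$. On the other hand, extension by the identity defines a group homomorphism
\[
E\colon \pi_0 \Diff_\partial(\mathcal{C}_m) \longrightarrow \pi_0 \Diff(X),
\]
satisfying $E([g_i]) = [f_i'] = [f_i]$, so the composite $\Phi \circ E|_G$ factors through $G^{\mathrm{ab}}$ to yield a homomorphism $q \colon G^{\mathrm{ab}} \to \Z^m$ with $q\circ p = \id_{\Z^m}$. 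Hence $p$ is an isomorphism and $G^{\mathrm{ab}} \cong \Z^m$. The main obstacle is securing the homomorphism $\Phi$ with the required linear independence; this is established in Ruberman's original work, and alternatively one could use the Baraglia-Konno or Auckly diffeomorphisms whose 1-stable triviality is verified in \cref{Example:BK,Example:Auckly}.
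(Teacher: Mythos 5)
Your proof is correct and takes essentially the same route as the paper: pick finitely many of Ruberman's diffeomorphisms, co-localize them to a single contractible submanifold $\mathcal{C}_m$ via \cref{thm:one-eye-thm-full-version}, and then use the extension homomorphism $E\colon \pi_0\Diff_\partial(\mathcal{C}_m)\to \pi_0\Diff(X)$ composed with Ruberman's invariant to split $\Z^m \twoheadrightarrow G^{\mathrm{ab}}$.

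One small imprecision worth flagging: Ruberman's polynomial invariant $\mathbb{D}$ is a homomorphism defined on the Torelli subgroup $\pi_0\operatorname{TDiff}(Z_n)$ (not all of $\pi_0\Diff(X)$), with values in the real power series ring $\R[\![H_2(Z_n)^*]\!]$, not in $\Z^m$. What Ruberman shows is that $\{\mathbb{D}(f_p)\}_{p=1}^{m}$ is $\R$-linearly independent; there is no canonical $\Z^m$-valued invariant hitting the standard basis. This does not affect your conclusion: the $f_i$ and the restrictions $g_i$ (via $E$) all act trivially on $H_2$, so the composite $\mathbb{D}\circ E$ is defined, and any $\R$-linear projection to $\R^m$ dual to the independent set gives $q'\colon G^{\mathrm{ab}}\to\R^m$ with $q'\circ p$ injective, which already forces $p$ to be an isomorphism. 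With that substitution of $\R^m$ for $\Z^m$ in the splitting step, your argument coincides with the one in the paper.
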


Here the subscript $\partial$ indicates that all maps fix the boundary pointwise. 
\cref{thm:Zm-in-a-diff-cork} produces subgroups of mapping class groups of contractible 4-manifolds that determine arbitrarily large but finite rank subgroups of the abelianization, by localizing families of diffeomorphisms of closed 4-manifolds with nonzero second Betti number. 
Konno-Mallick~\cite{Konno-Mallick-localizing} proved that  localizing  cannot produce infinite rank subgroups, so \cref{thm:Zm-in-a-diff-cork} is in this sense optimal. 

Relatedly, we mention an application of corks for diffeomorphisms, due to Shivkumar~\cite{Shivkumar-compactly-supp-diffeos}, that was inspired by our article. He showed that diff-corks can be assumed to have simply-connected complements. Combined with a proof strategy first used in \cite{KLMME}*{p.~64}, he applied this to construct exotic copies of $\R^4$ with compactly-supported smooth mapping class groups of arbitrarily large rank.  

Now we come to our final application of \cref{thm:one-eye-thm}.
Galatius and Randal-Williams \cite{galatius2023alexander}*{Theorem~B} proved that for dimension $n$ at least 6, and $C$ a contractible, compact, smooth $n$-manifold, the extension map $\Diff_\partial(D^n)\to \Diff_\partial(C)$ is a weak equivalence, for any embedding $D^n \hookrightarrow \mathring{C}$ of the $n$-dimensional disc into the interior of $C$.  The analogous statement was proven for $n=5$ by Krannich-Kupers~\cite{Krannich-Kupers-operadic-foundations}*{Theorem~6.18}. 
We show that this does not hold in dimension $4$.

\begin{restatable}{theorem}{inclusionnotweak}\label{thm:inclusion-not-he}
     There exists a smooth, compact, contractible $4$-manifold $C$ and a smooth embedding $D^{4} \subseteq C$ such that the extension map $\Diff_{\partial}(D^4) \hookrightarrow \Diff_{\partial} (C)$ is not surjective on path components, so is not a weak equivalence. 
\end{restatable}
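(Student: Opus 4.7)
The plan is to deduce \cref{thm:inclusion-not-he} from \cref{thm:one-eye-thm} applied to a single Ruberman-type exotic diffeomorphism. Following the examples recalled earlier in this section, I fix a closed, simply-connected smooth 4-manifold $X$ and an exotic diffeomorphism $f\colon X\to X$ that is $1$-stably isotopic to the identity and whose nontriviality in $\pi_0\Diff(X)$ is detected by a nonzero family Seiberg-Witten invariant $\SW(f)\neq 0$. Then \cref{thm:one-eye-thm} produces a compact contractible submanifold $C\subseteq X$ together with a diffeomorphism $f'\colon X\to X$, smoothly isotopic to $f$ and supported on $C$. Set $g:=f'|_C \in \Diff_\partial(C)$; this is the diff-cork exhibited by the construction, and $C$ is the candidate for the manifold in the statement.

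Pick any smoothly embedded $D^4\subseteq \mathring{C}$. The goal is to show that $[g] \in \pi_0\Diff_\partial(C)$ is not in the image of the extension-by-identity map $\pi_0\Diff_\partial(D^4)\to \pi_0\Diff_\partial(C)$; since any weak equivalence must induce a bijection on $\pi_0$, this will immediately yield the conclusion. Suppose, for contradiction, that $g$ is smoothly isotopic rel $\partial C$ to a diffeomorphism $g''$ supported inside $D^4 \subseteq \mathring{C}$. Extending this isotopy by the identity from $C$ to $X$, we would then obtain that $f$ is smoothly isotopic to a diffeomorphism $h\colon X\to X$ supported in a smoothly embedded $4$-disc inside $X$.

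The main technical input is the locality of the relevant family Seiberg-Witten invariant: any diffeomorphism $h\colon X\to X$ supported in a smoothly embedded $4$-disc satisfies $\SW(h)=0$. The heuristic is that the mapping torus $X_h\to S^1$ is trivial as an $X$-bundle outside the region $D^4\times S^1$, and the parameterized SW equations can be set up using families of metrics and perturbations pulled back from the trivial bundle structure on $(X\sm \mathring{D^4})\times S^1$; because $D^4$ admits a positive scalar curvature metric with round boundary, the parameterized moduli space localizes to $D^4 \times S^1$ and is empty for generic data, forcing $\SW(h)=0$. This contradicts $\SW(h)=\SW(f)\neq 0$, so $[g]$ is not in the image, as required.

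The main obstacle is establishing the locality statement rigorously for the specific family invariant used to detect the chosen Ruberman-type example. While the heuristic above is the expected proof, the technical work of setting up the parameterized moduli spaces, verifying transversality, and ruling out boundary contributions on $\partial D^4 \times S^1$ must be carried out with care; this should follow from a standard cut-and-paste argument in parameterized gauge theory, or alternatively by appealing to a wall-crossing formulation of Ruberman's invariant along a path of metrics that is constant outside $D^4$.
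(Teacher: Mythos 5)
Your overall architecture matches the paper's: fix an exotic $f$ with nonzero family Seiberg--Witten invariant that is $1$-stably trivial, apply the diffeomorphism cork theorem to localize it to a contractible $C$, suppose for contradiction that the resulting diff-cork diffeomorphism is isotopic into some $D^4\subseteq \mathring C$, extend back to $X$, and seek a vanishing result for the family SW invariant of any disc-supported diffeomorphism. That is the correct skeleton, and up to that point your reasoning is sound.

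However, the step you flag as ``the main obstacle'' is in fact a genuine gap, not a technical formality. Your proposed justification --- that the parameterized SW equations localize to $D^4\times S^1$ because $D^4$ admits a positive scalar curvature metric with round boundary, so a ``standard cut-and-paste argument in parameterized gauge theory'' gives $\FSW(h)=0$ --- is precisely the folklore statement that the paper explicitly declines to rely on. As the authors note in a remark after the proof, the assertion ``an exotic diffeomorphism detected by $1$-parameter family Seiberg--Witten invariants cannot be isotopic to one supported in a $4$-ball'' is known to experts but has no published proof, and the paper deliberately avoids it. Appealing to a proof that does not exist is not a proof. The paper instead uses Lin's gluing formula (\cref{thm:lin-gluing}): writing $X = D^4\cup_{S^3} X'$, puncturing once more, and expressing $\FSW(f',\wt{\mathfrak{s}})$ as a Kronecker pairing of a family cobordism map from the disc region against an ordinary monopole Floer cobordism map $\vv{HM}^*(W_1,\mathfrak{s}_1)\colon\HMt^*(S^3)\to\widehat{HM}^*(S^3)$. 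The vanishing is then proved \emph{not} by a PSC/localization argument, but by a clean grading count: since $d(\widetilde X,\widetilde{\mathfrak s})=0$ forces $d(X,\mathfrak s)=-1$ and hence $d(W_1,\mathfrak{s}_1)=0$, \cref{lemma:degree-of-map} gives the cobordism map nonnegative degree, and \cref{remark:degree-nonnegative-implies-zero} shows every nonnegative-degree map $\HMt^*(S^3)\to\widehat{HM}^*(S^3)$ is zero for purely algebraic reasons (the positive gradings of $\HMt^*(S^3)$ land in degrees where $\widehat{HM}^*(S^3)$ vanishes). To close the gap, you should replace your heuristic with this gluing-plus-grading argument, or supply an actual proof of the locality claim you want to invoke.
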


Galatius--Randal-Williams also show that $\Homeo_\partial(D^n) \to \Homeo_\partial(C)$ is a weak equivalence for $n \geq 6$ ~\cite{galatius2023alexander}*{Theorem~A}, and again this was extended to $n=5$ in~\cite{Krannich-Kupers-operadic-foundations}. It is unknown whether this holds in dimension 4. 
However we do have an isomorphism $\pi_0\Homeo_{\partial}(D^4) \xrightarrow{\cong} \pi_0\Homeo_{\partial} (C)$ for any contractible compact 4-manifold~$C$ by \cite{Quinn:isotopy}*{Proposition~2.2} together with \cite{Perron}, \cite{Quinn:isotopy}*{Theorem~1.4}.

Following a suggestion of David Gabai, we prove the following result. In contrast with \cref{thm:inclusion-not-he}, it shows that every exotic diffeomorphism of $\natural^n S^2 \times D^2$ is induced from $D^4$. 
We will recall the definition of barbell diffeomorphisms in \cref{section:generalised-barbell}.

\begin{restatable}{theorem}{barbellgenerates} \label{thm:barbell-generate}
For the 4-manifold $X_n := \natural^n S^2 \times D^2$, $n\geq 1$, there is an exact sequence
\[\pi_0\Diff_{\partial}(D^4) \to \pi_0 \Diff_{\partial}(X_n) \to \pi_0 \Homeo_{\partial}(X_n) \to 0.\]
Moreover, $\pi_0 \Homeo_{\partial}(X_n)$ is generated by standard barbell diffeomorphisms $\phi_{i,j}$ for $1 \leq i <j \leq n$. 
\end{restatable}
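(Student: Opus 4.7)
The plan is to prove the two claims of the theorem in turn: exactness of the sequence, and generation of $\pi_0\Homeo_\partial(X_n)$ by barbells. The exactness step is where the specific topology of $X_n$ enters crucially, so I describe it first.

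For exactness at $\pi_0\Diff_\partial(X_n)$, given $f \in \Diff_\partial(X_n)$ topologically isotopic to $\Id$, the goal is to smoothly isotope $f$ to a diffeomorphism supported in a smoothly embedded $D^4 \subseteq X_n$. For $i = 1, \ldots, n$, let $D_i \subseteq X_n$ denote the properly embedded co-core disk $\{*\} \times D^2$ in the $i$-th $S^2 \times D^2$ summand, with boundary on $\partial X_n$; its geometric dual is the core sphere $S_i = S^2 \times \{0\}$, meeting $D_i$ transversely in a single point. Since $f$ is topologically isotopic to $\Id$ rel boundary, $f(D_i)$ is homotopic to $D_i$ rel $\partial$. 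A relative version of Gabai's $4$-dimensional light bulb theorem, applied with geometric dual $S_i$, then produces a smooth isotopy of $f(D_i)$ back to $D_i$ rel boundary. Iterating over $i = 1,\dots,n$, at each stage keeping the previously normalized co-cores $D_j$ ($j<i$) fixed (which is possible since the cores $S_i$ can be chosen disjoint from the earlier $D_j$), and extending each stage to an ambient smooth isotopy, we smoothly isotope $f$ to a diffeomorphism $f'$ that is the identity on a tubular neighborhood $N$ of $\bigcup_i D_i$. Since removing a thickened co-core from $S^2 \times D^2$ leaves $(S^2 \setminus \{*\}) \times D^2 \cong D^4$, the complement $X_n \setminus N$ is diffeomorphic to $D^4$, and $f'$ is supported there. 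Hence $[f]$ lies in the image of $\pi_0\Diff_\partial(D^4)$.

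For surjectivity of $\pi_0\Diff_\partial(X_n) \to \pi_0\Homeo_\partial(X_n)$ and the barbell generation statement, it suffices to show $\pi_0\Homeo_\partial(X_n)$ is generated by the $\phi_{i,j}$, since each barbell is a smooth diffeomorphism. Given a topological homeomorphism $h$ of $X_n$ rel boundary, I would post-compose with a suitable product of $\phi_{i,j}$ so that $h(D_i)$ is topologically isotopic to $D_i$ rel boundary for every $i$, using the action of barbells on the residue classes of the properly embedded co-core disks and the topological isotopy theorem of \cites{Perron,Quinn:isotopy}. Once all $h(D_i)$ are topologically standardized, a topological analogue of the argument above---using topological light bulb theorems together with uniqueness of topological tubular neighborhoods---exhibits $h$ as topologically isotopic to the identity rel boundary, completing the proof.

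The main technical obstacle is the relative/parametric version of Gabai's light bulb theorem required to iterate through all the co-cores without disturbing the previously normalized ones. This should follow from a straightforward extension of the uniqueness arguments for dually-paired properly embedded disks, but care is needed to verify that each normalization isotopy can be chosen to fix the data already arranged in earlier stages, and that the resulting composite ambient isotopy is smooth throughout.
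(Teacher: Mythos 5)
Your argument for exactness at $\pi_0\Diff_\partial(X_n)$ follows the paper's strategy---Hurewicz, light bulb, isotopy extension, tubular-neighborhood uniqueness---but with an organizational choice that introduces exactly the obstacle you flag at the end. You iterate over $i=1,\dots,n$ while requiring each isotopy to fix the previously normalized co-cores $D_j$, $j<i$, which requires a relative light bulb theorem. The paper instead normalizes $D_1$, cuts along its tubular neighborhood to reduce to a boundary-fixing diffeomorphism of $X_{n-1}$, and inducts with base case $X_0\cong D^4$; each stage is then an unconditional application of the ordinary light bulb theorem (Gabai, Kosanovi\'{c}--Teichner), so no relative version is ever needed. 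Your remark that removing all the thickened co-cores leaves $D^4$ is correct, but the cut-and-induct route avoids the relative bookkeeping entirely and is the way to make your iteration rigorous.

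Your treatment of the generation of $\pi_0\Homeo_\partial(X_n)$ by barbells is genuinely different, and here there is a real gap. You propose to topologically standardize the discs by post-composing with barbells to match ``residues,'' and then run a topological analogue of the light-bulb/tubular-neighborhood argument to conclude that the result is topologically isotopic to the identity. This requires (a) a topological light bulb theorem, which is substantially more delicate than its smooth counterpart and which you cite only as a wish, and (b) a verification that barbells realize every admissible residue pattern. The paper sidesteps both: by Orson--Powell, for a compact simply-connected $4$-manifold with connected boundary, $\pi_0\Homeo_\partial$ is isomorphic to the group of Poincar\'{e} variations; since $H_2(\partial X_n)\to H_2(X_n)$ is onto, every boundary-fixing homeomorphism acts trivially on $H_2(X_n)$, so this group is $\wedge^2 H_1(\partial X_n)^*\cong\wedge^2\Z^n$. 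One then computes directly that $\phi_{i,j}$ has variation $e_i\wedge e_j$, and these elements generate $\wedge^2\Z^n$. Without the Orson--Powell computation (or an equivalent determination of $\pi_0\Homeo_\partial(X_n)$) plus the explicit variation calculation, your second step does not close; supplying the topological light bulb technology from scratch would be considerably more work than citing the mapping class group computation.
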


\begin{remark}
    \cref{thm:barbell-generate} implies that if there exists an exotic diffeomorphism of $\natural^n S^2\times D^2$ for some $n$, this would immediately produce an exotic diffeomorphism of the 4-ball. 
\end{remark}

\subsubsection*{Organization}

In \cref{section:preliminaries} we recall the necessary background on pseudo-isotopies and their connections to 1-parameter families of Morse functions on $X \times I$, as well as to stable diffeomorphisms.  
In \cref{section:structure-of-PI} we analyse the structure of pseudo-isotopies, introducing the Quinn core, and we prove several key lemmas. 
Then in \cref{section:1-eyed-cork-thm} we  prove \cref{thm:one-eye-thm}. 
\cref{section:sum-square-move-and-pi2-element} recalls Quinn's sum square move and defines a collection of elements of $\pi_2(X)$ determined by a nested eye Cerf family.  
In \cref{section:many-eyed-theorem}, making use of these methods, we prove \cref{thm:gen-barbell-cork-thm}. 
\cref{section:examples-1-stable} gives a unified treatment of several examples of exotic diffeomorphisms of simply-connected 4-manifolds from the literature that are 1-stably isotopic to the identity, and proves \cref{thm:Zm-in-a-diff-cork}.  
\cref{section:FSW-applications}  shows that these examples give rise to diffeomorphisms of contractible 4-manifolds with nontrivial family Seiberg-Witten invariants, and proves \cref{thm:inclusion-not-he}. Finally \cref{section:generalised-barbell} proves \cref{thm:barbell-generate}.

\subsubsection*{Acknowledgements}

We are grateful to David Auckly, Michael Freedman, David Gabai, David Gay, Daniel Hartman, Ailsa Keating, Hokuto Konno, Alexander Kupers, Roberto Ladu, Francesco Lin, Abhishek Mallick, and Danny Ruberman for many helpful conversations and suggestions. We are especially grateful to Hokuto Konno for discussing his related work and for detailed comments on a previous draft. We would also like to thank the anonymous referee for helpful suggestions which improved the paper.

\cref{thm:one-eye-thm} appeared independently in the PhD thesis of the fourth-named author, who would like to thank David Gay and David Auckly for their encouragement and helpful suggestions.  
This collaboration formed after the two teams learned that they had proven similar results.

VK was supported in part by NSF grants DMS-2105467 and DMS-2405044. 
AM was supported in part by NSF grant DMS-2405270. 
MP was supported in part by EPSRC grants EP/T028335/2 and EP/V04821X/2. TW was supported in part by NSF grant DMS-2005554 and by Simons Foundation grant MP-TSM-00002714. VK and MP would like to thank the International Centre for Mathematical Sciences in Edinburgh for hospitality and support during some of the work on this paper.


\section{Pseudo-isotopies and Cerf families of generalized Morse functions}\label{section:preliminaries}

Let $X$ be a smooth, compact 4-manifold. A \emph{pseudo-isotopy} on $X$ is a diffeomorphism $F \colon X \times I \xrightarrow{\cong} X \times I$ that restricts to the identity on $X \times \{0\} \cup \partial X \times I$.
If such a diffeomorphism preserves level sets $X\times \{s\}$ for all $s\in I$, then it is a smooth isotopy. 

\subsection{From pseudo-isotopies to Cerf families}
From a pseudo-isotopy $F$, we obtain a family of functions and gradient-like vector fields denoted by $\{(q_{t}, v_{t})\}$ and constructed as follows. Let $q_0 \colon X\times I \to I$ be the projection $q_0(x,s)= s$, and let $v_{0}$ be the unit vector field $\partial / \partial s$
on $X \times I$. Define \[(q_{1}, v_{1}) := (q_0 \circ F^{-1}, DF(v_0)).\]  
Both $q_0$ and $q_1$ are Morse functions without critical points. 

There is a generic 1-parameter family of generalized Morse functions $q_t \colon X \times I \to I$, along with an associated family of gradient-like vector fields $v_{t}$, interpolating between $(q_0,v_0)$ and $(q_1,v_1)$ ~\cite{Cerf}*{Section 4}. Here, a \emph{generalized Morse function} is permitted, unlike a Morse function, to have isolated degenerate critical points, however, they are singularities of codimension at most 1, corresponding to births and deaths of critical points. We call such a family $\{(q_t,v_t)\}_{t \in [0,1]}$ a \emph{Cerf family}.

Since $q_1$ is a Morse function with no critical points, we can integrate $v_1$ to obtain a diffeomorphism of $X \times I$. In fact, this recovers the pseudo-isotopy $F$, as we explain in the next lemma. 

\begin{lemma}\label{lemma:how-to-recover}
The diffeomorphism $X\times I \to X \times I$ obtained by flowing upwards from $X\times \{ 0\}$ along the vector field $v_1$ is precisely the diffeomorphism $F \colon X \times I \rightarrow X \times I$. 
\end{lemma}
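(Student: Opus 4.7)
The plan is to directly unpack the definitions $q_1 = q_0 \circ F^{-1}$ and $v_1 = DF(v_0)$, and observe that the curves $s \mapsto F(x,s)$ are precisely the integral curves of $v_1$, parametrized in exactly the way demanded by the phrase ``flowing upwards from $X \times \{0\}$.''

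First, I would note that the integral curves of $v_0 = \partial / \partial s$ are the vertical lines $s \mapsto (x,s)$, so by naturality of the pushforward, the integral curves of $v_1 = DF(v_0)$ are exactly the images $s \mapsto F(x,s)$. Since the pseudo-isotopy $F$ restricts to the identity on $X \times \{0\}$, each such curve $\gamma_x(s) := F(x,s)$ starts at $(x,0)$.

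Next, I would check that this parametrization matches the convention of flowing by $v_1$ relative to level sets of $q_1$. The phrase in the lemma refers to the diffeomorphism sending $(x,s)$ to the unique point on the $v_1$-trajectory through $(x,0)$ lying in $q_1^{-1}(s)$. From the identity $q_1 \circ F = q_0 \circ F^{-1} \circ F = q_0$ and $q_0(x,s)=s$, one gets $q_1(\gamma_x(s)) = q_1(F(x,s)) = s$, so $\gamma_x$ meets the level set $q_1^{-1}(s)$ precisely at the point $F(x,s)$. Combining these two steps identifies the flow diffeomorphism with $F$.

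There is no substantive obstacle here: once the conventions are pinned down, the statement is tautological. The only mild subtlety worth flagging in the write-up is distinguishing between the natural-time flow of $v_1$ and its reparametrization by the function $q_1$, which is the parametrization implicit in the lemma. One may also note in passing that $v_1$ is automatically gradient-like for $q_1$, since it is the $F$-pushforward of a gradient-like vector field for $q_0 = q_1 \circ F$.
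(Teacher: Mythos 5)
Your proof is correct and takes essentially the same approach as the paper: both arguments hinge on the observation that $s \mapsto F(p,s)$ is the integral curve of $v_1 = DF(v_0)$ starting at $p \in X \times \{0\}$, with you invoking naturality of pushforward where the paper writes out the ODE and cites uniqueness. Your additional check that $q_1(F(x,s)) = s$ is a nice clarification of the parametrization convention, but it does not change the route.
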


\begin{proof}
Recall that $q_1 := q_0 \circ F^{-1} \colon X \times I \to I$ and $v_1 := DF(v_0)$. Fix $p \in X \times \{0\}$, and let $\alpha_p \colon I \to X \times I$ be the integral curve of $v_1$. That is, $\alpha_p$ is the unique solution to the ODE \[\smfrac{\mathrm{d}}{\mathrm{d}s} \alpha_p(s) = v_1(\alpha_p(s)) \in T_{\alpha_p(s)} (X \times I)\] 
 with initial condition $\alpha_p(0)=p$.  Observe that $s \mapsto F(p,s)$ satisfies the same differential equation, i.e.\ \[\smfrac{\partial}{\partial s} F(p,s) = DF_{(p,s)}(v_0) =:  v_1(F(p,s)) \in T_{F(p,s)}(X \times I).\] Then uniqueness of solutions to ODEs implies that $\alpha_p(s) = F(p,s)$ for all $s \in I$.   
\end{proof}

\begin{remark}\label{remark:get-Id-integrating}
Similarly, the diffeomorphism $X\times I \to X \times I$ obtained by flowing upwards from $q_0^{-1}(0)$ along $v_0$ is the identity $\id_{X \times I}$.  
\end{remark}

\begin{remark}\label{remark:no-crit-pts-gives-isotopy}
If $q_{t}$ has no critical points for all $t$, then by \cref{lemma:how-to-recover} and \cref{remark:get-Id-integrating}, integrating $v_t$ yields a  family of diffeomorphisms $F_t \colon X \times I \rightarrow X \times I$ interpolating between $\id_{X \times I}$ and $F$.
By the smooth dependence of the solutions of ODEs on initial conditions, this is a smooth family, i.e.\ a smooth isotopy.  
The restriction of $F_t$ to the top slice, $X \times \{1\}$, gives an isotopy $f_{t} \colon X \rightarrow X$, with $f_{0} = \id$ and  $f_{1} = f$.  
However, typically $q_{t}$ will have critical points for some $t$. 
\end{remark}

Let $t \in (0,1)$ and let $p \in X \times I$ be a critical point of $q_t$. Let $Y_{p,t} \subseteq X \times \{0\}$ be the set of points $x \in X \times \{0\}$ such that the trajectory of $v_t$ starting at $x$ limits to $p$. The following lemma will be used in the discussion of the Quinn core in \cref{section:structure-of-PI}.

\begin{lemma}\label{lemma:pseudo-isotopy-support}
Let $F \colon X \times I \rightarrow X \times I$ be a pseudo-isotopy and let $(q_t, v_t)$ be a Cerf family for $F$. Let $Y \subseteq \mathring{X} \times \{0\}$ be a compact codimension zero submanifold such that
\[\bigcup_{t\in (0,1),\, p \textup{  crit. pt. of }q_t} Y_{p,t} \subseteq \mathring{Y}.\]
Then $F$ is isotopic to a pseudo-isotopy that is supported in $Y \times I$. 
\end{lemma}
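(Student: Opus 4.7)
The idea is to modify the Cerf family $(q_t,v_t)$ outside a small neighborhood of its critical locus, so that it becomes the trivial family $(s,\partial/\partial s)$ outside $Y\times I$; integrating the modified $v_1$ via \cref{lemma:how-to-recover} then yields a pseudo-isotopy $\tilde F$ supported in $Y\times I$, and performing the modification through a one-parameter family produces the desired isotopy from $F$ to $\tilde F$.

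First I would define the \emph{critical trajectory locus} $K\subseteq X\times I$ to be the union, over $t\in(0,1)$ and over critical points $p$ of $q_t$, of $p$ together with the $v_t$-trajectories from $Y_{p,t}\times\{0\}$ flowing up to $p$. By hypothesis $K\cap (X\times\{0\})=\bigcup_{t,p}Y_{p,t}\subseteq\mathring Y$. Choose nested open neighborhoods $K\subseteq N_0\subseteq \overline{N_0}\subseteq N_1$ of $K$ in $X\times I$ with $N_1\cap (X\times\{0\})\subseteq \mathring Y$, so that the closure of $N_1$ meets the bottom only inside $Y$.

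Next, pick a smooth cutoff $\rho\colon X\times I\to[0,1]$ with $\rho\equiv 1$ on $N_0$ and $\mathrm{supp}(\rho)\subseteq N_1$, and define
\[
\tilde q_t \;=\; \rho\, q_t + (1-\rho)\, s,\qquad \tilde v_t \;=\; \rho\, v_t + (1-\rho)\, \tfrac{\partial}{\partial s}.
\]
On $N_0$ this agrees with $(q_t,v_t)$, preserving the critical points and their basins; outside $N_1$ it equals the trivial pair $(s,\partial/\partial s)$. In the transition region $N_1\setminus N_0$ there are no critical points of $q_t$ (they all lie in $K\subseteq N_0$), and a preliminary deformation of $(q_t,v_t)$ in a collar of $\partial N_0$---using the contractibility of the space of regular, critical-point-free Morse/gradient-like data on a product cobordism---can be arranged so that $q_t$ is $C^1$-close to $s$ and $v_t$ is close to $\partial/\partial s$ there. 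Together with a sufficiently slowly-varying $\rho$, this forces the interpolation $\tilde v_t(\tilde q_t)>0$ off the critical points of $q_t$, so that $(\tilde q_t,\tilde v_t)$ is a bona fide Cerf family whose critical points coincide with those of $(q_t,v_t)$.

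Applying \cref{lemma:how-to-recover} to the modified family, the upward flow of $\tilde v_1$ from $X\times\{0\}$ yields a diffeomorphism $\tilde F\colon X\times I\to X\times I$. Outside $Y\times I$ we have $\tilde v_1=\partial/\partial s$, so the flow is vertical and $\tilde F$ restricts to the identity on $(X\setminus Y)\times I$; hence $\tilde F$ is a pseudo-isotopy supported in $Y\times I$. Replacing $\rho$ by $u\rho$ and letting $u$ run from $0$ to $1$ produces a smooth $1$-parameter family of Cerf families interpolating between $(q_t,v_t)$ and $(\tilde q_t,\tilde v_t)$; integrating each associated $v_1^u$ from the bottom via \cref{lemma:how-to-recover} gives a smooth family of pseudo-isotopies connecting $F$ to $\tilde F$, as desired.

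The main obstacle is the control in the transition region: ensuring that $\tilde v_t$ remains gradient-like for $\tilde q_t$ and that no new critical points appear throughout the $u$-family. This is handled by the pre-deformation of $(q_t,v_t)$ outside $N_0$ and by taking the cutoff $\rho$ to vary slowly enough relative to the norms of $\nabla q_t$ and $v_t$ in $N_1\setminus N_0$.
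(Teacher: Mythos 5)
There is a genuine gap in your argument: the claim that $\tilde v_1 = \partial/\partial s$ outside $Y\times I$ does not follow from your construction. You arrange $N_1 \cap (X\times\{0\}) \subseteq \mathring Y$, but that does not force $N_1 \subseteq Y\times I$. The hypothesis of the lemma only controls where the trajectories comprising $K$ meet the bottom $X\times\{0\}$: as a trajectory from $Y_{p,t}\subseteq\mathring Y$ flows upward toward the critical point $p$, it is free to drift in the $X$-direction and pass through $(X\setminus Y)\times I$, and the critical points $p$ themselves need not lie over $Y$. So $K$, and hence \emph{every} open neighborhood $N_1$ of $K$, can intersect $(X\setminus Y)\times I$. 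In that case $\operatorname{supp}(\rho)$ meets $(X\setminus Y)\times I$, the upward flow of $\tilde v_1$ from $(X\setminus Y)\times\{0\}$ is not vertical, and the integrated diffeomorphism $\tilde F$ is supported on the $X$-shadow of $N_1$, which can be strictly larger than $Y$. To repair this you would first have to push $K$ into $\mathring Y\times I$ by an ambient isotopy of $X\times I$ (fixing $X\times\{0\}\cup\partial X\times I$), and constructing that isotopy essentially reproduces the paper's argument, at which point the cutoff is redundant.

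The paper's proof sidesteps the problem by not attempting to make the Cerf data trivial off $Y\times I$. Setting $Z := X\setminus\mathring Y$, it uses that the flow of $v_t$ from $Z\times\{0\}$ avoids the critical locus to produce a family of embeddings $\varphi_t\colon Z\times I\to X\times I$, whose images are permitted to wander through $X\times I$. Isotopy extension gives ambient diffeomorphisms $G_t$ with $G_0 = \id$ and $G_t\circ\varphi_0 = \varphi_t$, and then $F' := G_1^{-1}\circ F$ is the identity on $Z\times I$ by \cref{lemma:how-to-recover}, with $G_t^{-1}\circ F$ the connecting isotopy. As a secondary concern, your ``preliminary deformation'' making $(q_t,v_t)$ $C^1$-close to $(s,\partial/\partial s)$ on $N_1\setminus N_0$ is asserted via a contractibility principle but not constructed, and it is not clear that it can be done relative to $N_0$ simultaneously for the whole $t$-family without disturbing the critical basins; this is precisely the kind of quantitative control the isotopy-extension route avoids needing.
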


\begin{proof}
Define $Z := X \sm \mathring{Y}$. 
    If $X$ has nonempty boundary, then  $\partial X \times I \subseteq Z \times I$ since $Y$ lies in the interior of $X$. Since, for all $t$, $q_t$ has no critical points in $Z$, the flow along $v_t$ gives a family of embeddings $\varphi_t \colon Z \times I \rightarrow X \times I$ with $\varphi_t (Z \times \{0\}) = Z \times \{0\}$ and $\varphi_t (Z \times \{1\}) \subseteq X \times \{1\}$. We can fix $q_t$ and $v_t$ to be such that 
    $\varphi_{t}$ restricts to the  identity on $Z \times \{0\} \cup \partial X \times I$.  Note that $\varphi_0 \colon Z \times I \to X \times I$ is the standard embedding, by definition of $q_0$ and $v_0$.   
    Using the isotopy extension theorem, extend $\varphi_{t}$ to a family of diffeomorphisms $G_{t} \colon X \times I \rightarrow X \times I$ with $G_0 = \id$ and $G_t \circ \varphi_0 = \varphi_t$ for all $t$. Using the relative version of the isotopy extension theorem, we can further arrange that $G_t$ restricts to the identity on $X \times \{0\}$ for all $t \in [0,1]$. Then \[F' := G_1^{-1} \circ F  \colon X \times I \rightarrow X \times I\] is a pseudo-isotopy, with an isotopy $G_t^{-1} \circ F$ from $F= G_0^{-1} \circ F$ to $F' = G_1^{-1} \circ F$.  
    Moreover,~$F'$ is supported in $\mathring{Y} \times I$. To see this, note that by \cref{lemma:how-to-recover} we have that $F|_{Z\times I} = \varphi_1$. Since $G_1 \circ \varphi_0 = \varphi_1$ we have that $G_1^{-1}\circ \varphi_1 = \varphi_0$. Hence 
    \[F'|_{Z \times I} = G_1^{-1} \circ F|_{Z \times I} = G_1^{-1} \circ \varphi_1 = \varphi_0 = \Id_{Z \times I}.\]
    It follows that $F'$ is supported in the complement of $Z \times I$, namely $\mathring{Y} \times I$. 
\end{proof}

\subsection{Nested eye graphics}\label{subsection:nested-eye-families}
Hatcher and Wagoner~\cite{HW}  introduced the secondary Whitehead group $\Wh_2(\pi)$ of a group $\pi$, and defined an obstruction $\Sigma(F) \in \Wh_2(\pi_1(X))$ of a  pseudo-isotopy of~$X$. 

When $\Sigma (F)$ vanishes, Hatcher and Wagoner showed that one can deform the 1-parameter family $(q_t,v_t)$ until its Cerf diagram is a \textit{nested eye} diagram with critical points of index 2 and 3 only.
Here, for each $t$, all critical points are assumed to have distinct critical values, and apart from birth and death times critical points of index 2 have critical values below those of critical points of index 3. Moreover, a nested eye diagram has the following features. 
\begin{enumerate}[(i)]
    \item There are $n$ birth points, of canceling index 2 and 3 pairs of critical points, for some $n\geq 0$.
    \item There are no rearrangements, and then all $n$ pairs cancel against one another.
    \item  There are independent birth and death points, and no handle slides. 
\end{enumerate}

\begin{figure}[ht]
\centering
\includegraphics[height=3.8cm]{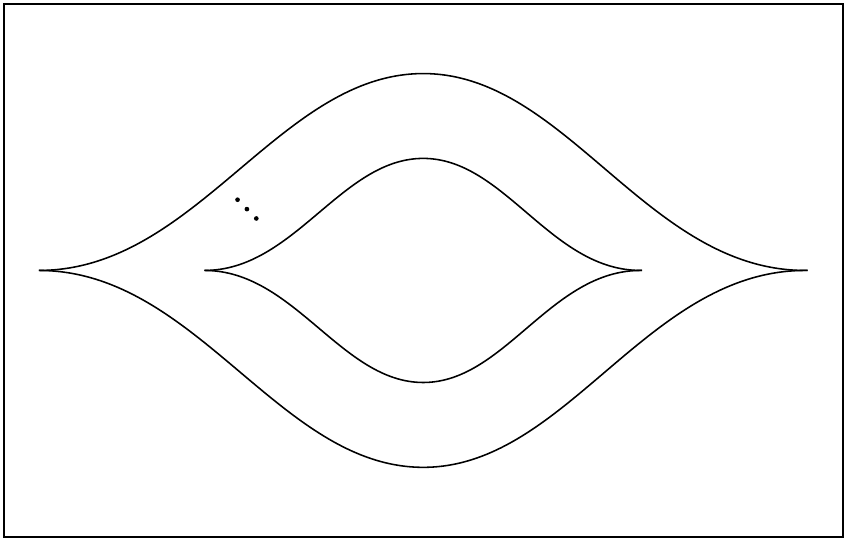}
\caption{A Cerf graphic for a family in nested eye position. The horizontal direction is the $t$-axis and the vertical direction is the $[0,1]$ direction, recording the critical values of the critical points in the Cerf family. } 
\label{figure:Cerf}
\end{figure}

For $\pi$ the trivial group, $\Wh_2(\{e\})=0$, and so the Hatcher-Wagoner obstruction $\Sigma(F)$ necessarily vanishes when $X$ is a simply-connected 4-manifold.  
Consequently, there is always a deformation of $(q_t,v_t)$ to a nested eye family with indices~$2$ and~$3$. Throughout the rest of this article all Cerf families will be assumed to be in nested eye position.  

\subsection{Stable diffeomorphisms}\label{sec: stable diffeomorphisms} 
We recall some  results on stable diffeomorphisms of $4$-manifolds used in follow-up sections.

The main results of this paper, in particular \cref{thm:one-eye-thm,thm:gen-barbell-cork-thm}, are stated for diffeomorphisms that are stably isotopic to the identity. 
The main applications of these results are to exotic diffeomorphisms, namely those that are topologically but not smoothly isotopic to the identity. Indeed, for a simply-connected $4$-manifold $X$, a diffeomorphism $f \colon X \to X$ is topologically isotopic to $\Id$ if and only if $f$ is $n$-stably isotopic to $\Id$ for some~$n$. We recall how to deduce this from the literature in the following theorem, and we elucidate the relationship with smooth and topological pseudo-isotopy. 

\begin{theorem}\label{thm:TFAE-isotopy-notations}
    Let $f \colon X \to X$ be a diffeomorphism of a simply-connected, compact, smooth 4-manifold, and if $\partial X \neq \emptyset$ then assume that $f|_{\partial X} = \Id_{\partial X}$. The following are equivalent:
    \begin{enumerate}[(i)]
      \item\label{item:TFAE-i} $f$ is topologically isotopic rel.\ boundary to $\Id$; 
        \item\label{item:TFAE-ii} $f$ is smoothly pseudo-isotopic rel.\ boundary to $\Id$;
        \item\label{item:TFAE-iii} $f$ is smoothly stably isotopic rel.\ boundary to $\Id$; 
        \item\label{item:TFAE-iv} $f$ is topologically pseudo-isotopic rel.\ boundary to $\Id$.
    \end{enumerate}
\end{theorem}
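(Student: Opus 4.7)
The plan is to establish the four equivalences via a cycle of implications, each being either immediate from definitions or a citation of an established result. Concretely, I would prove the triangle $(i)\Leftrightarrow(ii)\Leftrightarrow(iv)$ separately from the equivalence $(ii)\Leftrightarrow(iii)$.

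For the topological content $(i)\Leftrightarrow(ii)\Leftrightarrow(iv)$: the implication $(ii)\Rightarrow(iv)$ is immediate, since a smooth map is topological. For $(i)\Rightarrow(ii)$ I would cite the Perron--Quinn smoothing theorem \cites{Perron,Quinn:isotopy}, which asserts that a diffeomorphism of a simply-connected smooth $4$-manifold that is topologically isotopic to the identity is smoothly pseudo-isotopic to the identity. For $(iv)\Rightarrow(i)$ I would cite Quinn's topological isotopy theorem \cite{Quinn:isotopy}: any topological pseudo-isotopy of a simply-connected $4$-manifold can be deformed to a topological isotopy. Both of these deep results rely on Freedman's disk embedding theorem.

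The stable-isotopy content $(ii)\Leftrightarrow(iii)$ is the Cerf-theoretic heart of the theorem. For $(ii)\Rightarrow(iii)$, given a smooth pseudo-isotopy $F$, I would pass to an associated Cerf family $(q_t,v_t)$ of generalized Morse functions on $X\times I$. Since $\Wh_2(\pi_1(X))=\Wh_2(\{e\})=0$, the Hatcher--Wagoner obstruction $\Sigma(F)$ vanishes and the Cerf family can be placed in nested eye position with some number $n$ of eyes, as recalled in \cref{subsection:nested-eye-families}. Each eye, representing a cancelling birth--death pair of index-$2$ and index-$3$ critical points, can be traded for a single $S^2\times S^2$ stabilization via Gabai's one-eye argument \cite{gabai20223spheres}, yielding after $n$ iterations a smooth isotopy of $f\#\Id$ on $X\#^n(S^2\times S^2)$.

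For the converse $(iii)\Rightarrow(ii)$, I would use a descent argument: a smooth isotopy of $X\#^n(S^2\times S^2)$ from $\Id$ to $f\#\Id$ gives a level-preserving smooth pseudo-isotopy of the stabilized manifold; using isotopy extension and the fact that the stabilizing $S^2\times S^2$ summands are isotopically standard, one arranges this pseudo-isotopy to be the identity on each $(S^2\times S^2)\setminus D^4$ summand, and thereby descends it to a smooth pseudo-isotopy on $X$. The main obstacle to carrying out the plan is the Cerf-theoretic step $(ii)\Rightarrow(iii)$: converting each nested eye into a single $S^2\times S^2$ stabilization requires the recent input of \cite{gabai20223spheres}, and one must verify that this trade can be performed cleanly and independently for each of the $n$ eyes of the nested family, without the eliminations interfering with one another.
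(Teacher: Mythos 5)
Your implication cycle is a reasonable way to organize the proof, and your treatment of $(ii)\Rightarrow(iii)$ and $(iv)\Rightarrow(i)$ is essentially the same as the paper's. But there are two concrete problems.

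First, the citation for $(i)\Rightarrow(ii)$ is wrong. What Perron \cite{Perron} and Quinn \cite{Quinn:isotopy} prove is that a \emph{topological pseudo-isotopy} can be improved to a topological isotopy; that is precisely what the paper (and you, correctly) use for $(iv)\Rightarrow(i)$. The result you actually need for $(i)\Rightarrow(ii)$ is different: a topological isotopy gives trivial action on $H_2(X)$ (and trivial Poincar\'e variation, and trivial action on relative spin structures in the bounded case), and then one needs the implication ``trivial algebraic invariants $\Rightarrow$ smoothly pseudo-isotopic.'' For $X$ closed that is Kreck \cite{Kreck-isotopy-classes} and Quinn \cite{Quinn:isotopy} (with the Cochran--Habegger correction \cite{Cochran-Habegger}); for $X$ with connected boundary it is Saeki \cite{Saeki}; and for arbitrary boundary it is Orson--Powell \cite{Orson-Powell}. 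Your proposal cites none of these and ignores the boundary case entirely, even though the theorem explicitly allows $\partial X \neq \emptyset$.

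Second, your ``descent'' argument for $(iii)\Rightarrow(ii)$ is a genuine gap. You say one should take the level-preserving pseudo-isotopy of $X\#^n(S^2\times S^2)$ coming from the stable isotopy, arrange it to be the identity on each $S^2\times S^2$ summand, and ``descend'' to $X$. Making this rigorous requires knowing that the isotopy (or the induced pseudo-isotopy) can be made to preserve the stabilizing summands setwise, which is not at all routine --- it is related to the uniqueness of $S^2\times S^2$ connected summands, itself a deep theorem. The paper sidesteps this entirely: smooth stable isotopy immediately gives trivial action on homology, Poincar\'e variation, and relative spin structures, and then $(ii)$ follows by the same Kreck/Quinn/Saeki/Orson--Powell results already invoked for $(i)\Rightarrow(ii)$. (Alternatively, $(iii)\Rightarrow(ii)$ also follows directly from Gabai's Theorem 2.5 \cite{gabai20223spheres}, which is stated as an equivalence --- but that is again a citation, not a direct descent.) As written, your step is incomplete and should either be replaced by the homological-invariants route or by a direct appeal to Gabai's if-and-only-if statement.
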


\begin{proof}
Suppose \eqref{item:TFAE-i}, that $f$ is topologically isotopic to $\id$. Then $f$ acts trivially on the integral homology of $X$. If $X$ is closed,  it was shown by Kreck~\cite{Kreck-isotopy-classes}, and later by Quinn~\cite{Quinn:isotopy} (with a correction by Cochran-Habegger~\cite{Cochran-Habegger}), that $f$ is smoothly pseudo-isotopic to the identity. 
For $X$ with nonempty, connected boundary, one also observes that $f$ has trivial Poincar\'{e} variation, and it was shown by Saeki~\cite{Saeki} that $f$ is smoothly pseudo-isotopic to the identity. This was generalized to arbitrary boundary by Orson-Powell~\cite{Orson-Powell} by also noting that $f$ acts trivially on relative spin structures. This proves that \eqref{item:TFAE-i} implies \eqref{item:TFAE-ii}.

Then, assuming \eqref{item:TFAE-ii} it follows from Quinn~\cite{Quinn:isotopy} (with a correction in \cite{GGHKP}), and independently Gabai~\cite{gabai20223spheres}*{Theorem~2.5}, that $f$ is smoothly stably isotopic to the identity (rel.\ boundary). This proves that \eqref{item:TFAE-ii} implies \eqref{item:TFAE-iii}.

Now suppose that \eqref{item:TFAE-iii} holds, namely that $f$ is smoothly stably isotopic to the identity. Then, the invariants of $f$ from the first paragraph vanish, and by \cite{Kreck-isotopy-classes}, \cite{Quinn:isotopy}, and \cite{Orson-Powell}, $f$ is smoothly pseudo-isotopic to the identity. So \eqref{item:TFAE-ii} and \eqref{item:TFAE-iii} are equivalent. 

It is immediate that \eqref{item:TFAE-ii} implies \eqref{item:TFAE-iv}. 
So it remains to see that \eqref{item:TFAE-iv} implies \eqref{item:TFAE-i}. 
For this, Perron~\cite{Perron} and Quinn~\cite{Quinn:isotopy} (with a different correction to the latter in \cite{GGHKP}) showed that if $f$ is topologically pseudo-isotopic to the identity then it is topologically isotopic to the identity.
\end{proof}

The following theorem of Gabai gives a quantitative version of \cref{thm:TFAE-isotopy-notations}~\eqref{item:TFAE-ii}$\Longleftrightarrow$\eqref{item:TFAE-iii}. 

\begin{theorem}[{\cite{gabai20223spheres}*{Theorem~2.5~and~Corollary~2.10}}]\label{lemma:n-stable-isotopy-iff-n-eyed-PI}
Let $f \colon X \rightarrow X$ be a diffeomorphism with $f|_{\partial X} = \Id$. Then 
\[f\# \id \colon X\#^n (S^2\times S^2) \rightarrow X\#^n (S^2\times S^2)\] 
is smoothly isotopic to the identity rel.\ boundary if and only if $f$ is pseudo-isotopic to the identity via an $n$-eyed pseudo-isotopy. 
\end{theorem}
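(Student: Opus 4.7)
The plan is to establish both implications by viewing each eye in a Cerf graphic as a temporary $S^2\times S^2$ stabilization of $X$ that is born at the start of the eye and died at its end.

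For the direction $n$-eyed pseudo-isotopy $\Rightarrow$ stable isotopy, let $F$ be an $n$-eyed pseudo-isotopy of $X$ with Cerf family $(q_t, v_t)$ in nested eye position. Choose $t^* \in (0,1)$ lying after all $n$ births but before any deaths, so that $q_{t^*}$ is a Morse function on $X\times I$ with $n$ index-$2$ and $n$ index-$3$ critical points paired by the eye structure. Since nested eye position forbids handle slides and rearrangements (conditions (ii) and (iii) of \cref{subsection:nested-eye-families}), each canceling pair remains in the local model dictated by its birth: the index-$2$ handle is attached along an unknot with framing $0$ in a small ball disjoint from the other pairs, and the companion $3$-handle cancels it there. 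Consequently, for $c$ a regular value lying between the index-$2$ and index-$3$ critical values of $q_{t^*}$, the level set $q_{t^*}^{-1}(c)\subset X\times I$ is diffeomorphic to $X\#^n(S^2\times S^2)$. As $t$ sweeps through the middle interval where all $n$ pairs coexist, these level sets form a smooth family of embedded copies of $X\#^n(S^2\times S^2)$ inside $X\times I$, and integrating $v_t$ through this family (as in \cref{remark:no-crit-pts-gives-isotopy}) produces a smooth isotopy of $X\#^n(S^2\times S^2)$, rel.\ boundary, that begins at $\id$ and ends at a diffeomorphism isotopic to $f\#\id$.

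For the reverse direction, suppose $H_s$ for $s\in[0,1]$ is a smooth isotopy on $X\#^n(S^2\times S^2)$, rel.\ boundary, from $\id$ to $f\#\id$. We construct a Cerf family on $X\times I$ with exactly $n$ eyes realizing $H_s$ in the middle region. Insert $n$ disjoint canceling $2$/$3$ birth-death pairs into $X\times I$, each modeled on the canonical local birth inside a small ball in $X\times\{1/2\}$, so that just after all $n$ births the intermediate level set is diffeomorphic to $X\#^n(S^2\times S^2)$. Interpolate the isotopy $H_s$ across the parameter $t$ on this intermediate region while holding the birth and death local models fixed, and arrange the $n$ births early and the $n$ deaths late so that the resulting graphic is in nested eye position. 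By \cref{lemma:how-to-recover}, integrating the resulting gradient-like vector field $v_1$ produces a pseudo-isotopy of $X$ whose restriction to $X\times\{1\}$ is isotopic to $f$; by construction this pseudo-isotopy has an $n$-eyed Cerf graphic.

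The main obstacle is the reverse direction: extending the stabilized isotopy $H_s$ to a globally defined gradient-like vector field $v_t$ on $X\times I$ that agrees with the standard birth/death models near the singular times, while keeping the Cerf graphic in nested eye position. Coordinating the $n$ local birth pictures with the interpolated family of diffeomorphisms requires enough care to ensure no spurious handle slides or rearrangements are introduced when gluing; this topological bookkeeping is the substance of Gabai's argument in \cite{gabai20223spheres}*{Theorem~2.5 and Corollary~2.10}.
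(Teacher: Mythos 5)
The paper itself does not prove this statement: it is a direct citation to Gabai (\cite{gabai20223spheres}*{Theorem~2.5 and Corollary~2.10}), with no argument reproduced. So there is no ``paper's proof'' to compare against; your sketch is a genuine attempt to fill in Gabai's argument from scratch.

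Your outline of the correspondence -- each eye in the Cerf graphic temporarily manufactures an $S^2\times S^2$ summand, alive between its birth and death -- is the right picture. However, the forward direction as written has a real gap, and it is not where you say it is. You claim that the level sets $M_t = q_t^{-1}(1/2)$ form an isotopy of embedded copies of $X\#^n(S^2\times S^2)$ and that ``integrating $v_t$ through this family (as in \cref{remark:no-crit-pts-gives-isotopy}) produces a smooth isotopy of $X\#^n(S^2\times S^2)$, rel.\ boundary, that begins at $\id$ and ends at $f\#\id$.'' That step does not parse. \cref{remark:no-crit-pts-gives-isotopy} applies only when $q_t$ has no critical points; in the middle region of an $n$-eyed family, $q_t$ has $2n$ critical points, and flowing $v_t$ upward or downward from $M_t$ hits them -- it performs surgery, not a diffeomorphism. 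An isotopy of embedded submanifolds of $X\times I$ does not by itself give a self-isotopy of the abstract manifold $X\#^n(S^2\times S^2)$; for that you need a coherent family of identifications $M_t\cong X\#^n(S^2\times S^2)$, which is exactly the content of \cref{construction:identification-of-the-middle-level} (this uses the flow on the complement of the ascending and descending data, plus isotopy extension on the attaching circles, not a naive integration). Once such an identification is set up, one must still argue why the resulting loop/path of diffeomorphisms starts at $\id$ (using the standard birth model at $t=1/4$) and ends at a representative of $f\#\id$ (tracking what happens near $t=3/4$ and $t=1$, after the deaths, when $v_t$ really does integrate to $F$). None of this is immediate, and it is the actual substance of the ``easy'' direction.

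Your reverse direction is closer to the mark, and you correctly flag that making the births/deaths local, interpolating the given isotopy $H_s$ in the middle, and verifying that no spurious handle slides or rearrangements appear is where the real work lies. One small point of hygiene: your construction produces a pseudo-isotopy whose top restriction is only \emph{isotopic} to $f$, not equal; this is harmless (compose with a level-preserving isotopy, which does not change the Cerf graphic), but it should be said. Overall, the sketch is a plausible roadmap for Gabai's proof, but as a self-contained argument both directions rely on unproven assertions -- the forward direction's ``integration'' claim in particular would not survive a referee, and needs to be replaced by the middle-level identification machinery.
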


\begin{remark}
Note that in order to define $f\#\id$ one has to make a choice of an isotopy of~$f$ to a diffeomorphism that fixes a $4$-ball. It was shown in \cite{AKMR-stable-isotopy}*{Theorem~5.3} that all choices give rise to isotopic stabilized diffeomorphisms, i.e.\ that $f\#\id$ is well-defined up to isotopy.
\end{remark}

\section{The structure of a pseudo-isotopy}\label{section:structure-of-PI}

In this section we discuss the structure of the middle-middle level and its relation to the rest of the pseudo-isotopy. 

 \subsection{The Quinn core} \label{sec: the Quinn core}
Let $F \colon X \times I \rightarrow X \times I$ be a pseudo-isotopy of a simply-connected $4$-manifold with a nested eye family $(q_t,v_t)$.  
For each $t$ such that $q_t$ is a Morse function (which holds for all but the finitely many values of $t$ where births and deaths occur), the data $(q_t,v_t)$ determines a handle decomposition of $X \times I$ obtained by attaching $5$-dimensional $2$- and $3$-handles to $X \times [0,\varepsilon]$.
We describe the properties of this family of handle structures. 

Assume that there are $n$ nested eyes, and that all births happen before 
$t = 1/4$ and all deaths happen after $t = 3/4$.  Using Cerf's uniqueness of birth and death lemmas~\cite{Cerf}*{Chap.~III}, after a deformation of the family we assume that the Cerf data is given by the standard model births and deaths for a small interval of time around the births and deaths, $t \in (1/4-\delta,1/4)$ and $t \in (3/4,3/4 + \delta)$ respectively. The standard model births or deaths take place in respective $5$-balls in $X\times I$, and after a deformation we assume that $(q_t, v_t)$ is constant away from these balls for $t \in (1/4-\delta,1/4)$ and $t \in (3/4,3/4 + \delta)$.

For each $t \in [1/4,3/4]$, let $M_t := q_t^{-1}(1/2)$ denote the middle level set between the $2$- and $3$-handles. 
We call $M_{1/2}$ the \emph{middle-middle level}.  

Let $A^{t} := \{A^{t}_1, \ldots, A^{t}_{n}\}$ denote the ascending spheres of the 2-handles  and let $B^{t} := \{B^{t}_1, \ldots, B^{t}_{n}\}$ denote the descending spheres of      the $3$-handles, all in $M_t$.  For each $t \in [1/4, 3/4]$, the middle level $M_t$ is    diffeomorphic to \[M:=X \#^{n} S^{2} \times S^{2}.\] 

\begin{construction}\label{construction:identification-of-the-middle-level}
Throughout this paper we will use the following identification of $M_t$ with $M$ for $1/4\leq t \leq 3/4$.  A closely related argument was given in \cite{gay2021diffeomorphisms}*{Proof~of~Theorem~9}, for the case $X=S^4$.

Consider the framed  attaching circles $\alpha^t:=\sqcup_{i=1}^n \alpha^t_i \colon \sqcup^n S^1 \times D^3 \to X\times \{ \varepsilon\}$ of the $2$-handles at time $t$. 
Use  the product   structure on $X\times [0, \varepsilon ]$ to view the circles $\alpha^t$ as  embeddings 
$\sqcup_i\alpha^t_i \colon \sqcup^n S^1 \times D^3 \to  X\times \{ 0\}$.  

It is convenient to introduce the notation \[\gamma := \alpha^{1/4} \colon \sqcup^n S^1 \times D^3 \to X \] for the embeddings of thickened circles at time $t=1/4$.  We sometimes abuse notation and conflate $\gamma$ and $\alpha^t$  with their images, which are $n$ disjointly embedded framed circles $\gamma = \sqcup_i \gamma_i\subseteq X$.  
Let $X_{\gamma}$ denote the result of surgering $X$ along the framed circles $\gamma$
\begin{equation} \label{Xgamma}
X_{\gamma} := X \sm \gamma(\sqcup^n S^1 \times \mathring{D}^3) \cup_{\gamma|^{}_{\partial}} (\sqcup^n D^2 \times S^2). \end{equation}
Since there are embedded discs in $X$, with boundaries the $\gamma_i$, across which the framing extends, it follows that the result of the surgery is diffeomorphic to $M = X \#^n S^2 \times S^2$.  
We use a fixed choice of such discs to fix once and for all an identification \[M = X_\gamma.\]

Let $\varphi_t$ be an isotopy of $X\times \{ 0 \}$, $t\in[1/4, 3/4]$, that for each $t$ takes $\gamma$ to $\alpha^t$. We obtain this by applying the isotopy extension theorem to the isotopy of embeddings $\alpha^t$.  For each $t$, the diffeomorphism $\varphi_t$ induces a diffeomorphism 
\begin{equation}\label{eqn:diffeo-on-complemement-of-attaching-circles}
    X \sm \gamma(\sqcup^n S^1 \times \mathring{D}^3) \xrightarrow{\cong} X \sm \alpha^t(\sqcup^n S^1 \times \mathring{D}^3).
\end{equation}

Next, the flow of $v_t$ induces a diffeomorphism $X\sm \alpha^t(\sqcup^n S^1\times \mathring{D}^3) \xrightarrow{\cong}  M_t\sm \mathcal{N}(A)$, where~$\mathcal{N}(A)$ is an open tubular neighborhood of $A$ diffeomorphic to $\sqcup^n S^2\times \mathring{D}^2$.
We obtain a diffeomorphism because we removed neighborhoods of the ascending and descending manifolds of the index 2 critical points, so the flow encounters no critical points.  
Combining these two diffeomorphisms, we obtain:
\[{X\sm \gamma(\sqcup^n S^1\times \mathring{D}^3)} \xrightarrow{\cong}  {X\sm \alpha^t(\sqcup^n S^1\times \mathring{D}^3)} \xrightarrow{\cong}  {M_t\sm {\mathcal N}(A)}. \]
Attaching $\sqcup^n D^2 \times S^2$ to the domain yields $X_{\gamma}$, as in \eqref{Xgamma}.
We extend the diffeomorphism from \eqref{eqn:diffeo-on-complemement-of-attaching-circles} over $\sqcup^n D^2 \times S^2$.  
The manifold $M_t$ is obtained from $M$ by surgery along $\alpha^t$, and hence using the flow of $v_t$ and the standard fact that passing a critical point gives rise to a surgery, we obtain an identification
\[M =X_\gamma \xrightarrow{\cong} M_t\]
for each $t \in [1/4,3/4]$.
This concludes the construction of an identification of $M$ with $M_t$.
\end{construction}

We continue the discussion of the geometric data in the middle level.
At $t = 1/4$ and $t= 3/4$, the spheres intersect transversely with $|A^{t}_i \pitchfork B^{t}_{j}| = \delta_{ij}$. As $t$ varies from $1/4$ to $3/4$, we see a regular homotopy of $A^t \cup B^t$ that restricts to an isotopy of the $A$-spheres and to an isotopy of the $B$-spheres.
During this regular homotopy, new intersection points are introduced by finger moves,  and removed by  Whitney moves. We can assume, after a deformation, that all finger moves are performed at time $t=3/8$ and all the Whitney moves are performed at time $t=5/8$. 
The finger and Whitney moves are guided by two collections of disjointly embedded discs in the middle-middle level $M_{1/2}$,  pairing excess intersections among $A^{1/2}$ and $B^{1/2}$, the finger discs $V$ and the Whitney discs $W$. Since a finger move with time reversed is a Whitney move, both collections of discs can be used as the data for a collection of Whitney moves to cancel excess intersections between $A^{1/2}$ and $B^{1/2}$. 
The moves corresponding to $V$ are performed with time reversed at $t=3/8$, while the moves corresponding to $W$ are performed at $t=5/8$. 
Each of the collections $V$ and $W$ consists of framed Whitney discs. 
For ease of notation, at $t=1/2$ we will suppress the mention of $t$ and denote the collections of spheres in $M_{1/2}$ by $A = \{A_1, \ldots, A_n\}$ and $B = \{B_1, \ldots, B_n\}$. 

\begin{definition}\label{defn:Quinn-core}
Define the {\em Quinn core} to be a regular neighborhood of $A \cup B \cup V \cup W$ in the middle-middle level,
\begin{equation} \label{eq:Q}
Q := \mathcal{N}(A \cup B \cup V \cup W ) \subseteq M_{1/2}=X \#^n S^{2} \times S^{2}.
\end{equation}
\end{definition}

Considering the trajectories of $v_{1/2}$ in $X\times I$ at $t=1/2$ that intersect the Quinn core, together with the trajectories starting from the index 3 critical points or ending at index 2 critical points, determines a sub-$h$-cobordism $P \subseteq X \times I$. This $h$-cobordism is obtained from $Q\times [-\varepsilon, \varepsilon] \subseteq M_{1/2} \times [1/2-\varepsilon,1/2 + \varepsilon]$ by attaching two collections of $3$-handles: to $Q\times \{ \varepsilon \}$ along $B$, and to $Q\times \{ -\varepsilon \}$ along $A$.
For $i=0,1$ we define \[Q_i := P \cap (X \times \{i\}).\]
The following statement is implicit in  
\cite{Quinn:isotopy}, and will be used to establish conventions and describe the framework used to prove \cref{thm:one-eye-thm}.

\begin{lemma}[Quinn Core Lemma]\label{lem:Quinn Core Lemma}
    Let $F \colon X \times I \rightarrow X \times I$ be a smooth pseudo-isotopy of a simply-connected $4$-manifold.  
    Then there is a smooth isotopy $F \simeq F'$ such that $F' = \id$ on $(X \sm \mathring{Q_0}) \times I$. 
\end{lemma}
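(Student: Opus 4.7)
My plan is to apply \cref{lemma:pseudo-isotopy-support} with $Y = Q_0$, which reduces \cref{lem:Quinn Core Lemma} to verifying two points: that $Q_0$ is a compact codimension zero submanifold of $\mathring{X} \times \{0\}$, and that each descending set $Y_{p,t}$ lies in $\mathring{Q_0}$, as $p$ ranges over the critical points of $q_t$ and $t$ over $(0,1)$.

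For the first point, $P$ is by construction a compact codimension zero sub-$h$-cobordism, obtained by thickening $Q \subseteq M_{1/2}$ in the level direction and attaching $3$-handles along $A$ below and along $B$ above. Consequently $Q_0 = P \cap (X \times \{0\})$ is a compact codimension zero submanifold; since $Q$ lies in the interior of $M_{1/2}$ and downward $v_{1/2}$-flow respects $\partial X \times I$, we have $Q_0 \subseteq \mathring{X} \times \{0\}$. For the second point at $t = 1/2$: if $p$ is an index-$2$ critical point of $q_{1/2}$, then $Y_{p,1/2}$ is the attaching circle of the associated $2$-handle, which by \cref{construction:identification-of-the-middle-level} is some $\gamma_i$; since $Q$ contains a neighborhood of $A$, the downward $v_{1/2}$-flow produces an open neighborhood of $\gamma_i$ inside $Q_0$. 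If $p$ is an index-$3$ critical point, then each $x \in Y_{p,1/2}$ has forward $v_{1/2}$-trajectory first meeting $M_{1/2}$ on the descending sphere $B_p \subseteq Q$, so $x \in Q_0$ by definition of $P$.

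The cases $t \in (0,1) \setminus \{1/2\}$ are handled by exploiting the structure of the nested eye family. After a preliminary deformation, we may assume that $(q_t, v_t)$ is independent of $t$ outside a compact neighborhood in $X \times I$ of the critical-point trajectories: on the middle plateau $t \in [1/4, 3/4]$ the critical points can be taken stationary, with $v_t$ varying only to implement the finger and Whitney moves inside a neighborhood of $V \cup W \subseteq Q$, and near each birth/death we use the standard local model inside a small $5$-ball. By choosing $Q$ (and hence $P$) large enough, we arrange all birth/death balls and all supports of finger/Whitney modifications to sit inside $P$. Then outside $P$ the flow of $v_t$ agrees with that of $v_{1/2}$, so no trajectory from a point in $(X \setminus \mathring{Q_0}) \times \{0\}$ can limit to any critical point, and hence $Y_{p,t} \subseteq \mathring{Q_0}$ for every $t$. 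The main obstacle is executing this confinement while preserving the nested eye structure and without introducing new critical points outside $P$; once accomplished, \cref{lemma:pseudo-isotopy-support} yields the required isotopy $F \simeq F'$ with $F' = \Id$ on $(X \setminus \mathring{Q_0}) \times I$.
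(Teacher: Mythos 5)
Your proposal correctly identifies the structure of the argument: reduce to \cref{lemma:pseudo-isotopy-support} by showing that the descending sets $Y_{p,t}$ of all critical points, over all $t$, land in $\mathring{Q_0}$. Your verification at $t=1/2$ is fine and essentially matches the set-up in the paper. However, the argument for $t\neq 1/2$ has a gap, and it is precisely the step that the paper's proof actually constructs. You assert that after a ``preliminary deformation'' the vector fields $v_t$ vary only to implement finger and Whitney moves inside a neighborhood of $V\cup W\subseteq Q$, but this is not a consequence of the nested-eye normal form: between the finger-move and Whitney-move times the spheres $A^t\cup B^t$ move by an isotopy of $M$ that a priori wanders far from $Q$, and the finger/Whitney moves are only the transition events, not the entire motion. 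Confining that isotopy to $Q$ is exactly what the paper's straightening isotopy $\Phi_t$ achieves --- built by making $Q_W^t$ and $Q_V^t$ constant in $M$ via isotopy extension, extending to $X\times I$ supported near $\cup_t M_t$, applying it on $[1/2,3/4]$, holding constant through the death interval $[3/4,3/4+\delta]$, and then undoing it so that the endpoints $(q_0,v_0)$ and $(q_1,v_1)$ of the family are preserved. You flag this yourself as ``the main obstacle'' but do not execute it, so the proposal describes the shape of the proof without supplying its core.

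A secondary issue: the suggestion to ``choose $Q$ (and hence $P$) large enough'' is not available. By definition $Q$ is a regular neighborhood of the spine $A\cup B\cup V\cup W$, so it cannot be enlarged to absorb an uncontrolled isotopy; any valid choice of $Q$ must retract onto that spine. The resolution is to deform the family so that the critical trajectories do not leave the region determined by $Q$, not to enlarge $Q$. Without the $\Phi_t$-type deformation (or an equivalent construction) the hypothesis of \cref{lemma:pseudo-isotopy-support} is not verified, and the proof is incomplete.
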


\begin{proof}
   Recall from \cref{construction:identification-of-the-middle-level} that $M_t$ is identified with $M$ for $1/4\leq t\leq 3/4$, and as discussed in \cref{sec: the Quinn core}, for some small $\delta$ the times $t \in (1/4-\delta,1/4)$ and $t \in (3/4,3/4 + \delta)$ correspond to standard births and deaths.

   Next we focus on the main part of the proof, stating that after a deformation of the pseudo-isotopy, the isotopy of $A$ and $B$ spheres is confined to $Q$. For $3/8  <  t  <  5/8$ the union $A\cup B$ moves by an isotopy, but the topology of $A\cup B$ changes at times $t=3/8$ and $t=5/8$ when finger and Whitney moves take place, respectively. For $1/4\leq t\leq 3/8$ it is convenient to consider the union $Q_V^t:=\mathcal{N}(A^t\cup B^t \cup \upsilon^t)$, a regular neighborhood of the union of $A^t$, $B^t$, and arcs $\upsilon^t$ guiding the finger moves that will occur at $t=3/8$. 
   Reversing time, a Whitney move becomes a finger move, so for $5/8 \leq t \leq 3/4$ there are analogous arcs which we denote by~$\omega^t$, shown in \cref{figure:Whitney_move_spine}, and we consider the regular neighbourhood of the corresponding union $Q_W^t := \mathcal{N}(A^t\cup B^t\cup\omega^t)$. 
   \begin{figure}[ht]
\centering
\includegraphics[height=2cm]{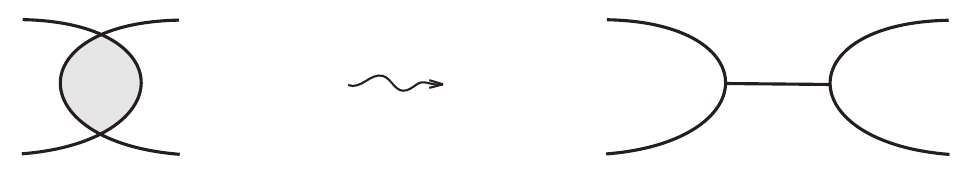}
{\small
\put(-322,0){$A^t$}
\put(-252,0){$B^t$}
\put(-288,25){$W^t$}
\put(-67,33){$\omega^t$}
\put(-133,0){$A^t$}
\put(-2,0){$B^t$}
}
\caption{Left: a schematic illustration of $A^t\cup B^t\cup W^t$ for $t$ just before the Whitney move time $5/8$. Right: $A^t\cup B^t\cup \omega^t$ for $t$ right after $5/8$.}
\label{figure:Whitney_move_spine}
\end{figure}
\begin{figure}[ht]
\centering
\includegraphics[height=4cm]{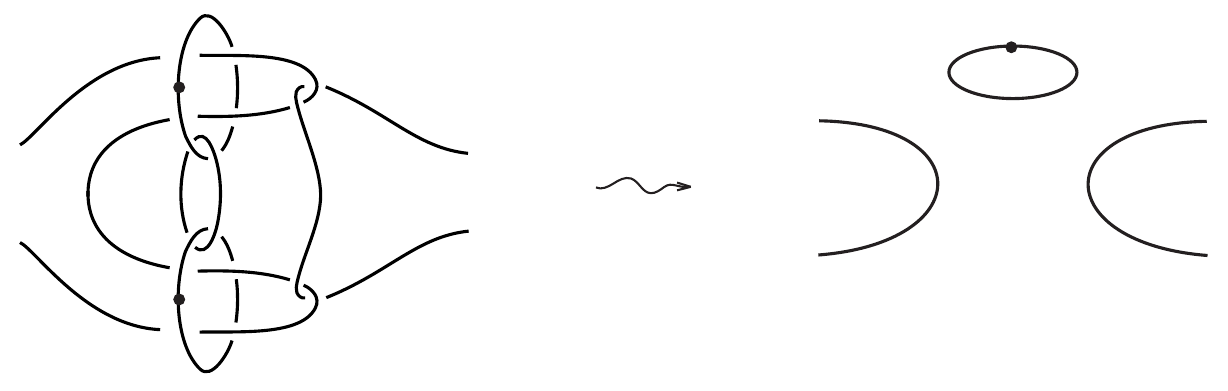}
{\small
\put(-340,54){$0$}
\put(-261,54){$0$}
\put(-291,54){$0$}
\put(-95,57){$0$}
\put(-35,57){$0$}
}
\put(-177,67){$\cong$}
\caption{The regular neighborhoods $\mathcal{N}(A^t\cup B^t\cup W^t)$, $t=5/8-\varepsilon$, and ${\mathcal N}(A^t\cup B^t\cup \omega^t)$, $t=5/8+\varepsilon$, are diffeomorphic. In terms of Kirby diagrams, a diffeomorphism is implemented by sliding the $0$-framed $2$-handle corresponding to either $A^t$ or $B^t$ (the left-most and right-most handles respectively) twice over the central $2$-handle corresponding to $W^t$, and then canceling a $1$-, $2$-handle pair.}
\label{figure:Whitney_move}
\end{figure}

For $t \in [1/2,5/8]$, we let $Q_W^t := \mathcal{N}(A^t \cup B^t \cup W^t)$, using a family of regular neighborhoods that vary smoothly with respect to~$t$. 
Similarly for $t \in [3/8,1/2]$ we define $Q_V^t := \mathcal{N}(A^t \cup B^t \cup V^t)$. 
\cref{figure:Whitney_move} shows Kirby diagrams for $Q_W^{5/8 - \varepsilon}$ and $Q_W^{5/8 + \varepsilon}$. The diagram for $Q_W^{5/8 - \varepsilon}$ on the left can be deduced, for example, from \cite{Matveyev}*{Figure~4}. In more detail, the figure in \cite{Matveyev} has two $1$-handles which may be called the accessory $1$-handle and the Whitney $1$-handle. Our diagram is obtained
by a handle slide of the accessory $1$-handle over the Whitney $1$-handle. The Whitney circle then links both of them, and it serves as the attaching circle of the zero-framed $2$-handle corresponding to the Whitney disc.

Observe that the two \emph{a priori} different versions of $Q_W^{t}$, for $t$ near the Whitney move time $5/8$, are in fact isotopic codimension zero submanifolds of $M_{5/8}$. To show this, \cref{figure:Whitney_move} indicates two handle slides and a cancellation to pass from a Kirby diagram for $Q_W^{5/8 - \varepsilon}$ to $Q_W^{5/8 + \varepsilon}$. 
These are local handle moves that can be implemented ambiently, yielding an isotopy of the codimension zero submanifold. 
By realizing this isotopy sufficiently close to $t=5/8$, we obtain a consistent family of regular neighborhoods, that we use the same notation $Q_W^t$ to describe. That is, for each $t \in [1/2,3/4]$, we have that $Q_W^t$ a codimension zero submanifold of $M_t \cong M$.  We similarly obtain such a family $Q_V^t$ in $M_t$ for $t \in [1/4,1/2]$.

We give the rest of the argument for $Q_W^t$ and $t \geq 1/2$. The same argument applies, with time reversed, for $Q_V^t$ and $t \leq 1/2$. 

Using the identification $M_t=M$ in \cref{construction:identification-of-the-middle-level}, for $t \in [1/2,3/4]$, we consider~$Q_W^t\subseteq M$. This determines an isotopy of ${\mathcal N}(A\cup B\cup W)$ in $M$, and using isotopy extension we obtain a corresponding isotopy $\varphi_t$ of $M$.  The effect of the inverse $\varphi^{-1}_t$ of this isotopy is that $Q^t_W$ becomes constant in $M=M_t$ for all $t\in[1/2, 3/4]$.

Next we extend this to an isotopy $\Phi_t$ of $X \times I$, $t \in [1/2,3/4]$, supported in a neighborhood  of $\cup_{t \in [1/2,3/4]} M_t$. Such a neighborhood is illustrated as the shaded region labeled (i) in \cref{figure:Isotopy}. 
\begin{figure}[ht]
\centering
\includegraphics[height=4.5cm]{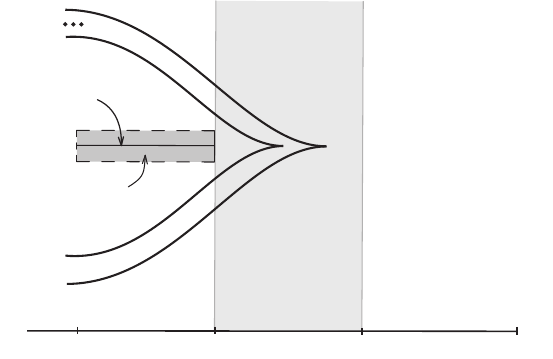}
{\scriptsize
\put(-100,40){(ii)}
\put(-171,97){$\varphi_t^{-1}$}
\put(-160,54){(i)}
\put(-50,71){(iii)}
\put(-176,-5){1/2}
\put(-122,-5){3/4}
\put(-80,-5){$3/4+\delta$}
\put(-7,-5){1}
}
\caption{The isotopy is given by $\Phi_t^{-1}$ in the preimage under $q_t$ of the shaded region labeled (i), its reverse in region (iii), and it is constant (as a function of $t$) in region (ii). The isotopy is the identity on the left, top, and bottom boundary arcs of the shaded rectangle (i), i.e.\ the dashed boundary arcs.  
The picture is symmetric for $t\leq 1/2$. }
\label{figure:Isotopy}
\end{figure}

For $t \in [3/4,3/4+\delta]$, we assume that the family $(q_t,v_t)$ consists of $n$ elementary paths of death~\cite{Cerf} (where as usual $n$ is the number of eyes), each of which is supported in an arbitrarily small neighborhood of the corresponding death point.

Apply the inverse $\Phi^{-1}_t$ of this isotopy on $X\times I$ for $t$ in $[1/2,3/4]$, apply the constant isotopy for $t\in [3/4,3/4+\delta]$, and then undo $\Phi_t^{-1}$, i.e.\ apply $\Phi_{r(t)}^{-1}$ for $t \in [3/4+\delta,1]$, where $r \colon [3/4+\delta,1] \to [1/2,3/4]$ is the unique decreasing linear bijection. We are not concerned with the effect of $\Phi_t$ for $t \in [3/4+\delta,1]$ since all deaths occur before then. Note that the overall result can be deformed to not having applied any isotopy.  
By differentiating, we obtain a deformation of the family of gradient-like vector fields $\{v_t\}_{t \geq 1/2}$. 

The outcome of this operation is that $Q_W^t$ becomes constant in $M=M_t$ for all $t \in [1/2,3/4]$, and because we made the corresponding modification of $v_t$ we still have that $A^t \cup B^t \subseteq Q^t_W$ for all $t \in [1/2,3/4]$. 
Then since $Q_W^t = Q_W^{1/2} \subseteq Q$, we have that 
\[A^t \cup B^t \subseteq Q \subseteq M = M_t\] 
for all $t \in [1/2,3/4]$. 
As stated above, by reversing time we apply the analogous deformation of $v_t$ for $t \leq 1/2$, to arrange that $Q_V^t$ is constant and $A^t \cup B^t \subseteq Q_V^t$ for all $t \in [1/4,1/2]$. 
This completes the argument for our claim that, after a deformation, we can assume that the isotopy of $A$ and $B$ spheres in $M$ is confined to the Quinn core $Q$.

These deformations ensure that for all $t \in [1/4,3/4]$ the trajectories starting or ending at the critical points of $q_t$ are such that their intersection with the middle level lies in $Q$. Hence for every critical point of $q_t$ the downward flow along $v_t$ lands in $Q_0$.  We claim that this holds for all $t \in [0,1]$, or equivalently for all $t \in [1/4-\delta,3/4 + \delta]$, since there are no critical points outside this range. 

By the above considerations, we know that for $t = 3/4$ the downwards trajectories of all critical points of $X \times I$ land in $Q_0 \subseteq X \times \{0\}$.  By choosing the neighborhood for each elementary path of death to be sufficiently small, we guarantee that these trajectories land in $Q_0$ for all $t\in [3/4,3/4+\delta]$ as well. 
By symmetry the same conclusion holds for all $t\in [1/4-\delta,1/4]$. 

We then apply  \cref{lemma:pseudo-isotopy-support} to deduce that the pseudo-isotopy $F$ is isotopic to a pseudo-isotopy $F' \colon X \times I \to X \times I$ that is supported on $Q_{0} \times [0,1]$. This concludes the proof of \cref{lem:Quinn Core Lemma}.
\end{proof}

\subsection{The homotopy type of the Quinn core and the arc condition}\label{subsection: homotopy type of core}
Consider two spheres $A_i$ and $B_j$. Choose the orientations of $A_i$ and $B_i$ to be such that the algebraic intersection number $\lambda(A_i,B_j) = \delta_{ij}$. We write $V_{ij}$ (respectively $W_{ij}$) for the collection of the finger (respectively Whitney) discs pairing intersections of $A_i$ with $B_j$. 
Assuming genericity, the intersection $(\partial V_{ij} \cup  \partial W_{ij}) \cap A_i$  is the image of a generic immersion $\sqcup^{C_{ij}} S^1 \looparrowright A_i$, for some integers $C_{ij} \in \mathbb{N}_0$, and if $i=j$ then there is also a generically immersed arc $I \looparrowright A_i$.  
Similarly $(\partial V_{ij} \cup  \partial W_{ij}) \cap B_j$ is the image of a generic immersion  $\sqcup^{C_{ij}} S^1 \looparrowright B_j$, and if $i=j$  a generically immersed arc $I \looparrowright B_i$. Note that the number of circles in $A_i$ and $B_j$ is the same, so using $C_{ij}$ to denote both quantities is justified. 

\begin{lemma}\label{lemma:Q-path-connected}
    We can assume without loss of generality that $Q$ is path-connected.
\end{lemma}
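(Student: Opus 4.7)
The plan is to enlarge the collection of finger/Whitney discs by inserting cancelling pairs of finger and Whitney moves into the Cerf family, in order to connect up distinct components of $Q$ without changing the pseudo-isotopy up to isotopy or the number of nested eyes.

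First I would observe that for each $i$, the spheres $A_i$ and $B_i$ lie in a common component of $Q$. At $t=1/4$ they meet transversely in a single point, and by choosing the Cerf family generically we may arrange this intersection to persist (as a transverse point neither created by a finger move at $t=3/8$ nor destroyed by a Whitney move at $t=5/8$) all the way to $t=1/2$. Consequently the components of $Q$ correspond to equivalence classes of $\{1,\ldots,n\}$ under the relation generated by $i\sim j$ whenever some disc in $V\cup W$ has boundary arcs meeting both $A_i\cup B_i$ and $A_j\cup B_j$.

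Next, suppose $Q$ has two distinct components $Q^{(1)}$ and $Q^{(2)}$. I would choose indices with $A_i\subseteq Q^{(1)}$ and $B_j\subseteq Q^{(2)}$ (necessarily $i\neq j$), and, using that $M_{1/2}$ is path-connected, pick an embedded arc $\gamma\subseteq M_{1/2}$ from a point of $A_i$ to a point of $B_j$ with interior disjoint from $A\cup B\cup V\cup W$. A finger move along $\gamma$ creates two cancelling intersection points between $A_i$ and $B_j$, paired by a small framed Whitney disc $D$ supported in a tubular neighborhood of $\gamma$. I would insert this finger move into the Cerf family just before $t=3/8$, together with the cancelling Whitney move along $D$ just after. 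This inserts a pair of cancelling tangencies into the Cerf graphic, and so does not alter the isotopy class of the pseudo-isotopy, nor introduce new births or deaths, nor disturb the nested-eye form. Sliding the new finger move to $t=3/8$ and the new Whitney move to $t=5/8$ places $D$ into the collection $W$ at time $t=1/2$, and since $\partial D$ meets both $A_i$ and $B_j$, the enlarged Quinn core $Q' = \mathcal{N}(A\cup B\cup V\cup W\cup\{D\})$ has $Q^{(1)}$ and $Q^{(2)}$ in the same component. Iterating at most $n-1$ times produces a path-connected Quinn core.

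The main obstacle I anticipate is verifying that inserting a cancelling finger-Whitney pair genuinely preserves the isotopy class of the pseudo-isotopy while remaining in nested-eye form, and that the new arc $\gamma$ and disc $D$ can be chosen in sufficiently general position so as not to interact with the existing fingers and Whitney discs in a way that destroys the connection we just created. Both points should follow from standard Cerf theory together with a general position argument, since cancelling tangencies contract in the Cerf graphic and $D$ lies in an arbitrarily small neighborhood of $\gamma$.
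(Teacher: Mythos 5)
Your proof is correct, but it takes a genuinely different route from the paper's, and in fact the paper anticipates your approach: the remark immediately following \cref{lemma:Q-path-connected} notes that one could instead ``arrange for $Q$ to be path-connected by adding extra trivial finger and Whitney moves between $A_i$ and $B_j$,'' which is exactly what you do. The paper's own proof instead uses Hatcher--Wagoner's independent trajectories principle to slide an eye whose data is disjoint from the rest to the start of the Cerf graphic, and then merges it with the outermost eye via the uniqueness-of-birth move, iterating until $Q$ is connected. The tradeoff is spelled out in the remark: inserting a cancelling finger/Whitney pair between $A_i$ and $B_j$ increases $C_{ij}$ by one, and hence increases both $N$ and $M$ in \cref{lemma:homotopy-type-of-Q}, enlarging the wedge $\bigvee^N S^2 \vee \bigvee^M S^1$ modelling $Q$; the paper's eye-merging reduces the number of eyes $n$ without adding any finger/Whitney data, keeping $Q$ as small as possible. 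Since the constants $N$ and $M$ feed into the construction of the diff-cork, the paper chooses the smaller model.

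Two small points on your write-up. The observation that $A_i$ and $B_i$ lie in the same component follows immediately from $\lambda(A_i,B_i)=1\neq 0$, so $A_i\cap B_i\neq\emptyset$; no genericity or persistence argument is needed. Also, your equivalence relation governing the components of $Q$ should additionally include $i\sim j$ whenever a disc of $V\cup W$ with arcs on $A_i\cup B_i$ meets, in its interior, a disc with arcs on $A_j\cup B_j$, since the interiors of $V$- and $W$-discs may intersect; this does not affect the rest of your argument, which only uses that $Q$ has finitely many components each containing at least one pair $A_i\cup B_i$. Finally, when you insert the finger move and cancelling Whitney move, remember to record \emph{both} new discs in the middle-middle level: the finger disc goes into the collection $V$ and the Whitney disc $D$ into $W$; you only mention adding $D$.
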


\begin{proof}
 If $Q$ were not connected, there would exist an index $i$ such that the spheres
 $A_i \cup B_i$ are disjoint from $A_j \cup B_j$ for all $j \neq i$. We consider the corresponding $i$th eye in the Cerf graphic. 
By the independent trajectories principle of Hatcher-Wagoner~\cite{HW}*{\S 7}, there is a deformation of the family moving this eye away from all the others. Move this eye so it appears before all the others in the Cerf graphic, and then merge it with the outermost eye, similar to the deformation depicted in \cref{figure:merge}, but assuming the leftmost family has only one eye. Repeat this process while $Q$ remains disconnected. Since the process reduces the number of eyes and $Q$ is path-connected if the family has one eye, the algorithm terminates.  
 \end{proof}

\begin{remark}
    We could also have arranged for $Q$ to be path-connected by adding extra trivial finger and Whitney moves between $A_i$ and $B_j$ (or between $B_i$ and $A_j$). In keeping with the ethos of this article, we chose the method of proof of \cref{lemma:Q-path-connected} in order to minimize the complexity of $Q$, in particular to minimize $N$ in the next lemma.  
\end{remark}

\begin{lemma}\label{lemma:homotopy-type-of-Q}
Let $n$ denote the number of eyes, i.e.\ the number of spheres $A_i$ and the number of spheres $B_j$. Let $C_{ij}$, for $i,j = 1, \dots, n$, be the number of immersed circles corresponding to intersections of  $A_i$ with $B_j$, as above. 
Let $m := |\mathring{V} \pitchfork  \mathring{W}|$ denote the number of intersection points between the interiors of the $V$ discs and the interiors of the $W$ discs. Assume, using the proof of \cref{lemma:Q-path-connected} if necessary, that $Q$ is path connected. Then
\begin{equation} \label{eq: Q homotopy type}
    Q\simeq \bigvee^{N} S^2 \vee \bigvee^M S^1
\end{equation}
where 
\begin{equation} \label{eq:numbers}
N := 2n+\sum_{i,j} C_{ij} \text{ and } M := m + \sum_{i,j} C_{ij} - n + 1. 
\end{equation} 
\end{lemma}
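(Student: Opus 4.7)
The plan is to use that $Q$ deformation retracts onto the $2$-complex $K := A \cup B \cup V \cup W$, and then to analyse $K$ by first computing its Euler characteristic and then its attaching-map structure. Set $f := \sum_{i,j} f_{ij} = |V| = |W|$, where $f_{ij}$ is the number of finger--Whitney disc pairs between $A_i$ and $B_j$. Then $|A \cap B| = n + 2f$ and $|V \cap W| = m + 2f$, counting both interior crossings ($m$) and boundary identifications at the new intersection points of $A$ and $B$ (the $2f$ points at each of which a $V$-boundary meets a $W$-boundary). A direct inclusion--exclusion over the stratified union $A \cup B \cup V \cup W$ yields $\chi(K) = 3n - m$, matching the predicted $N - M + 1$.

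Next, since $A \cup B$ is path-connected and built from $2n$ embedded spheres identified at $p := n + 2f$ points, iteratively identifying pairs of points (each identification either merges two components or adds one $S^1$ when the two points are already in the same component) yields $A \cup B \simeq \bigvee^{2n} S^2 \vee \bigvee^{p - 2n + 1} S^1$, so $\pi_1(A \cup B)$ is free of rank $2f - n + 1$.

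Then I would attach the discs of $V \cup W$. The graph $(\partial V_{ij} \cup \partial W_{ij}) \cap A_i$ consists of $C_{ij}$ cycles together with, for $i = j$, one arc between the two ``leftover'' intersection points that are unpaired by $V$ and by $W$ respectively. For a cycle of length $2k$, the $2k$ corresponding disc-boundaries sum in $H_1(A \cup B)$ to $[\gamma_A] - [\gamma_B]$, where $\gamma_A \subseteq A_i$ and $\gamma_B \subseteq B_j$ are the cycles traced on the two spheres; both classes vanish since they lie on $2$-spheres, so the span has rank exactly $2k - 1$ generically. Thus the $2k$ attachments kill $2k - 1$ free generators of $\pi_1$, and the remaining disc, now attached along a null-homotopic curve, wedges on an $S^2$; this new sphere is visibly the immersed sphere obtained by capping the cylinder formed by the $V$ and $W$ discs of the cycle with disks in $A_i$ and $B_j$ bounding $\gamma_A$ and $\gamma_B$. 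For an arc of length $\ell$, the analogous sum equals $[\gamma_A - \gamma_B]$ with $\gamma_A, \gamma_B$ endpoint-sharing arcs, which is generically a nontrivial class, so all $\ell$ discs kill $\ell$ loops without producing a new sphere. Summing over cycles and arcs kills $\sum(2k - 1) + \sum \ell = 2f - \sum C_{ij}$ loops in total and wedges on $\sum C_{ij}$ new spheres. Finally, each of the $m$ interior crossings of $V$ with $W$ identifies two interior points of discs within an already-connected space, contributing one further $S^1$ by van Kampen. This gives $N = 2n + \sum_{i,j} C_{ij}$ spheres and $M = m + \sum_{i,j} C_{ij} - n + 1$ circles, as claimed.

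The main obstacle is the rank claim used above: one must show that within each cycle the $2k$ disc-boundaries achieve the full possible rank $2k - 1$ in $H_1(A \cup B)$, and that contributions from distinct cycles and arcs are independent. This can be arranged by choosing a basis of $\pi_1(A \cup B)$ adapted to a spanning tree of the intersection graph of $A \cup B$, so that different cycles and arcs use disjoint generators of the free complement. Granting this genericity, $K$ is a $2$-complex with free $\pi_1$ of rank $M$ and $H_2 \cong \mathbb{Z}^N$ all of whose $2$-cell attachments are in standard wedge form (each either killing a free $\pi_1$-generator or attached along a null-homotopic loop), and therefore $Q \simeq K \simeq \bigvee^N S^2 \vee \bigvee^M S^1$.
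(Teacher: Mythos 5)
Your overall strategy---compute $\chi$ of the spine $K = A\cup B\cup V\cup W$, compute $\pi_1(A\cup B)$, then attach the discs---matches the paper's, and your intermediate computations of $\chi(K)=3n-m$ and of $\pi_1(A\cup B)$ (free of rank $2f-n+1$, with $f=\sum_{i,j} f_{ij}$) are correct. However, the step where you attach the discs of $V\cup W$ has a genuine gap.

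You track the attaching loops only in $H_1(A\cup B)$, and then assert---via a ``genericity'' claim about ranks---that the $2$-cell attachments are ``in standard wedge form.'' But knowing the $H_1$-classes of the attaching loops, and even that their span has the expected rank, does not determine the homotopy type of a $2$-complex: one must control the attaching maps in $\pi_1$, not merely in $H_1$. As a cautionary example, a $2$-cell attached along a commutator $[a,b]$ in $S^1\vee S^1$ represents the trivial class in $H_1$, yet it produces a torus rather than $S^1\vee S^1\vee S^2$. Choosing a basis ``adapted to a spanning tree'' controls ranks, but it does not show that the attaching words have a form amenable to Tietze elimination, and your claim that the remaining disc in each cycle is attached along a \emph{null-homotopic} (rather than merely null-homologous) curve is unjustified.

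What is missing is the structural fact the paper's proof uses: each disc $U\in V\cup W$ has boundary passing through exactly two double points $p_i, p_j$ of $A\cap B$, so the relation it imposes on $\pi_1(A\cup B)=F(g_1,\ldots,g_k)$ (with $g$'s the double-point-loop generators) is literally of the form $g_i^{}g_j^{-1}$. This is a $\pi_1$-level statement, strictly stronger than anything $H_1$ sees, and it makes the presentation $\langle g_1,\ldots,g_k,h_1,\ldots,h_m\mid g_{i_1}^{}g_{j_1}^{-1},\ldots,g_{i_\ell}^{}g_{j_\ell}^{-1},1,\ldots,1\rangle$ reducible via Andrews--Curtis moves to $\langle g_1,\ldots,g_{k'},h_1,\ldots,h_m\mid 1,\ldots,1\rangle$. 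The paper then invokes that Andrews--Curtis equivalence classes of presentations correspond bijectively to $3$-deformation types of $2$-complexes (Hog-Angeloni--Metzler, Theorem~2.4), which is exactly what upgrades a presentation-level statement to a homotopy equivalence. You need both of these ingredients---the specific $g_i^{}g_j^{-1}$ form of the relations, and the presentation $\leftrightarrow$ $3$-deformation correspondence---to close the gap in your argument.
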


\begin{proof}
To analyze the homotopy type of the Quinn core, we study its $2$-complex spine $\spineQ := A \cup B \cup V \cup W$, noting that since $Q$ is by definition a regular neighborhood of~$\spineQ$, we have that $\spineQ \simeq Q$.  Consider the construction of~$\spineQ$ as a three step process. We record a presentation of the fundamental group given at each step. First, the fundamental group of the union of the spheres $A\cup B$ is free, generated by double point loops, cf. \cite{FQ} or \cite{Freedman-notes}*{11.3}. 
Note that a single path is chosen from the base point in $M_{1/2}$ to each $2$-sphere, and the same collection of paths is used in the definition of all double point loops. Not all double points contribute generators of the fundamental groups, as some of them (one intersection point $A_i\cap B_i$ for each $i$, and one intersection point $(A_i\cup B_i)\cap(A_j\cup B_j)$ for $i\neq j$) reduce the number of connected components.
Let $p_1,\ldots, p_k$ be the remaining double points, and let $g_i$ denote the free generator corresponding to $p_i$.

Next, abstractly attach the finger and Whitney discs, with interiors disjoint from each other, to $A\cup B$. The attaching curve of each disc passes through exactly two double points, say $p_i$ and $p_j$, $i\neq j$. The effect of attaching this $2$-cell to $A\cup B$ on the fundamental group is the relation $g^{}_i g_j^{-1}=1$. Note that the boundaries of $V, W$ may intersect on $A$ and $B$, however, this is immaterial for writing down a presentation of the fundamental group. 

The result of attaching all finger and Whitney discs is a $2$-complex with free fundamental group. Finally, the spine of the Quinn core $\spineQ$ is obtained by introducing intersections between $V$ and $W$, giving rise to additional free generators $h_1,\ldots, h_m$. The resulting presentation is
\begin{equation} \label{eq:presentation1}
\pi_1(\spineQ)\cong \langle g_1,\ldots, g_k, h_1,\hdots, h_m \mid  g^{}_{i_1} g_{j_1}^{-1}, \ldots, g^{}_{i_\ell} g_{j_\ell}^{-1}, 1, \ldots, 1
\rangle \end{equation}
Here $\ell$ is the total number of finger and Whitney discs, and there are $2n$ trivial relations, corresponding to the spheres $A$ and $B$. Given two generators $g^{}_{i}, g_{j}$ and a relation $g^{}_{i} g_{j}^{-1}$, one of the generators, say $g_j$, and the relation may be removed from the presentation using Tietze moves.
Any other appearance of $g_j$ in a relation $g_jg_r^{-1}$ is replaced with $g_ig_r^{-1}$. Note that if $r=i$, this leads to the trivial relation $g_ig_i^{-1}$.
We refer to \cite{HAM} for a discussion of group presentations and $2$-complexes; to be specific the moves we used are all of the form (26) - (28) in that reference. They are called $Q^{**}$ transformations in \cite{HAM} and sometimes they are referred to as the Andrews-Curtis moves; they are all of the Tietze moves with the exception of adding a trivial relation. An inductive application of these moves reduces the presentation to 
\begin{equation} \label{eq:presentation2}
\pi_1(\spineQ)\cong \langle g_1,\ldots, g_{k'}, h_1,\hdots, h_m \mid 1, \ldots, 1
\rangle. 
\end{equation}
In addition to the $2n$ trivial relations in \cref{eq:presentation1}, a trivial relation $1$ (equal to $g_ig_i^{-1}$ for some $i$) appears as the result of Tietze moves for each immersed circle corresponding to intersections of $A_i$ with $B_j$, showing that there are a total of $N$ relations in \cref{eq:presentation2}.
The fact that the number of generators $k'+m$ equals $M$ may be deduced from the fact the Euler characteristic of $\spineQ$ equals $3n-m$, and it is unaffected by the Andrews-Curtis moves.
Note that \eqref{eq:presentation2} is also a presentation for the fundamental group of the $2$-complex on the right-hand side of \eqref{eq: Q homotopy type}. 
The equivalence classes of group presentations up to Andrews-Curtis moves are in bijective correspondence with $3$-deformation types of $2$-complexes, cf. 
\cite{HAM}*{Theorem 2.4}. Here a $3$-deformation refers to a simple homotopy equivalence given as a composition of elementary expansions and collapses through cells of dimensions at most $3$. It follows that $\spineQ$ and the $2$-complex on the right-hand side of \eqref{eq: Q homotopy type} are homotopy equivalent.
\end{proof}

The following condition will be important in our proofs. 
We see from \cref{lemma:homotopy-type-of-Q} that controlling the integers $C_{ij}$ allows us to control the homotopy type of $Q$. 

\begin{definition}\label{defn:arc-condition}
We say that \emph{Quinn's arc condition holds} if for each $i$ we have that $C_{ii}=0$, so there are no immersed circles corresponding to $A_i$, $B_i$ intersections, and if in addition for each $i$ both $(\partial V_{ii} \cup  \partial W_{ii}) \cap A_i \subseteq A_i$  and  $(\partial V_{ii} \cup  \partial W_{ii}) \cap B_i \subseteq B_i$ are embedded arcs. 
\end{definition}

In \cite{Quinn:isotopy}*{Section~4} Quinn proved the following lemma.  

\begin{lemma}[Quinn]\label{lemma:quinn-arc-condition}
There is a deformation such that Quinn's arc condition holds. 
\end{lemma}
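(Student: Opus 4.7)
The plan is to iteratively reduce a combinatorial complexity of the configuration of finger and Whitney discs by local deformations of the Cerf family. Fix an index $i$ and work at the middle-middle level $t=1/2$. With $k := |V_{ii}| = |W_{ii}|$, the $1+2k$ intersections of $A_i \cap B_i$ carry two near-perfect matchings: $M_V$, from the $V_{ii}$-boundary arcs on $A_i$, and $M_W$, from the $W_{ii}$-boundary arcs on $A_i$. The union $M_V \cup M_W$ decomposes combinatorially into a disjoint union of even-length cycles, realizing the $C_{ii}$ immersed circles in \cref{defn:arc-condition}, together with a single path, realizing the immersed arc. The identical cycle structure appears on $B_i$.

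Two deformation moves drive the reduction. Move (a): if $V \in V_{ii}$ and $W \in W_{ii}$ pair the same two intersections $\{p,q\}$ (a length-2 cycle), then the finger move guided by $V$ at $t=3/8$ creates $\{p,q\}$ which is immediately annihilated by the Whitney move guided by $W$ at $t=5/8$; by the independent trajectories principle of \cite{HW}*{\S 7}, this finger-Whitney pair can be isolated from the rest of the family and then removed by a local deformation, decreasing $k$ by one. Move (b): given a cycle of length $2\ell \geq 4$, perform a band-sum of an adjacent pair of $V$- and $W$-discs along the cycle on $A_i \cup B_i$, producing a new Whitney disc that re-pairs its endpoints to match a neighboring finger disc; this yields a length-2 cycle cancellable by (a), plus a cycle of length $2\ell - 2$. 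Iterating (a) and (b) strictly decreases the total cycle length at each step, so after finitely many iterations $C_{ii}=0$ for every $i$. Any remaining self-intersections of the resulting arc on $A_i$ (or $B_i$) are then removed by transverse pushes of the adjacent disc boundaries off $A_i$ (or $B_i$) within the 4-dimensional slice $M_{1/2}$.

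The main obstacle is move (b): the band-sum must be realized within $M_{1/2}$ while avoiding the other spheres $A_j, B_j$ for $j \neq i$ and the discs pairing intersections among other index pairs, lest the arc condition be disrupted for those indices. General position in the 4-dimensional middle-middle level allows this, though one must carefully track what happens to the analogous graphs on $A_j$ and $B_j$ under the band-sum. The move also introduces new interior $V \pitchfork W$ intersections, which increases the parameter $m$ of \cref{lemma:homotopy-type-of-Q}, but this is immaterial for the arc condition itself.
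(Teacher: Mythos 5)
The paper does not reprove this lemma: it simply cites \cite{Quinn:isotopy}*{Section~4} and notes that Quinn's argument for the innermost eye applies to each eye separately. Quinn's argument is the sum square move (recalled in \cref{subsection:sum-square-move} of this paper), which deforms the Cerf family by rearranging the boundary arcs of the finger and Whitney discs without changing their number. Each sum square move merges two circles of $\partial V_{ii}\cup \partial W_{ii}$ into one, and after finitely many iterations one is left with a single immersed arc; further sum square moves then make that arc embedded. Crucially, the validity of the sum square move as a \emph{deformation} of the Cerf family requires a separate argument, which Quinn supplies using the dual spheres of $A$ and $B$.

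Your Move~(a) has a genuine gap: removing a finger--Whitney pair $(V,W)$ pairing the same two intersection points $\{p,q\}$ is \emph{not} always achievable by a deformation. After the deformation, the isotopy of $A^t\cup B^t$ for $t$ near $[3/8,5/8]$ is replaced by a different isotopy, and the two are homotopic rel endpoints if and only if the associated element of $\pi_2(M_{1/2})$ represented by the sphere $V\cup W$ (the analogue of the $\theta_{i,j}$ classes of \cref{subsection:element-pi2}) vanishes. This is precisely the kind of secondary obstruction that pseudo-isotopy theory is built to detect; if your Move~(a) were always available one could iterate it to eliminate all critical points and conclude that every pseudo-isotopy is trivial, which is false. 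The independent trajectories principle of \cite{HW}*{\S 7} concerns separating critical points with disjoint flow trajectories in a Cerf graphic and does not licence cancellation of a finger--Whitney pair. Your Move~(b) is similarly off: the sum square move operates on two discs of the \emph{same} type (two $W$ discs, or two $V$ discs), does not reduce the number of discs, and does not produce a length-$2$ cycle to be ``cancelled''; it merges circles. Finally, your concluding step of ``transverse pushes of the adjacent disc boundaries off $A_i$'' is not coherent as stated, since $\partial V_{ii}\cap A_i$ and $\partial W_{ii}\cap A_i$ must remain on $A_i$ by definition; removing crossings between these arcs is again done by sum square moves, not by pushing off the sphere.
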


Quinn's proof just does this for the innermost eye, but we can apply the proof to each of the eyes individually to obtain the same conclusion. 
This will likely create new $A_i$, $B_j$  intersections for $i \neq j$.

\begin{corollary}
    \label{lemma:homotopy-type-of-Q-single-eye-case-embedded-arcs}
    Suppose that there is a single eye and that Quinn's arc condition holds. 
    Let $m := |\mathring{V} \pitchfork  \mathring{W}|$ denote the number of intersection points between the interiors of the $V$ discs and the interiors of the $W$ discs. Then
    \begin{equation}\label{eq: homotopy equation}
    Q\simeq S^2 \vee S^2 \vee \bigvee^m S^1.
    \end{equation}
\end{corollary}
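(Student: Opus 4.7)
My plan is to deduce this directly from \cref{lemma:homotopy-type-of-Q} by specializing to $n=1$ and invoking Quinn's arc condition. The work has essentially already been done; what remains is bookkeeping.

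First I would check the path-connectedness hypothesis of \cref{lemma:homotopy-type-of-Q}. With a single eye, the spheres $A_1$ and $B_1$ share the geometric intersection point corresponding to the canceling index $2$-$3$ critical pair (at $t = 1/4$ and $t = 3/4$ before finger moves introduce additional intersections), so $A_1 \cup B_1$ is already connected. The Quinn core $Q$, being a regular neighborhood of $A_1 \cup B_1 \cup V_{11} \cup W_{11}$, is therefore path-connected, and there is no need to invoke \cref{lemma:Q-path-connected}.

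Next I would substitute. Quinn's arc condition (\cref{defn:arc-condition}) in the single-eye case asserts in particular that $C_{11}=0$. Plugging $n=1$ and $C_{11}=0$ into the formulas
\[ N = 2n + \sum_{i,j} C_{ij}, \qquad M = m + \sum_{i,j} C_{ij} - n + 1 \]
from~\eqref{eq:numbers} yields $N = 2$ and $M = m$. Applying \cref{lemma:homotopy-type-of-Q} then gives
\[ Q \simeq \bigvee^{2} S^2 \vee \bigvee^{m} S^1 = S^2 \vee S^2 \vee \bigvee^{m} S^1, \]
which is precisely~\eqref{eq: homotopy equation}.

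There is no genuine obstacle in this argument: the corollary is a numerical specialization of \cref{lemma:homotopy-type-of-Q}, and the verification of connectedness is immediate from the geometry of a single eye. The role of the embedded arc part of the arc condition is not to affect the homotopy-type count directly (it is already encoded in the bound $C_{ii}=0$ combined with the two residual arcs), but rather to keep the spine's geometry controlled for later arguments in \cref{section:1-eyed-cork-thm}.
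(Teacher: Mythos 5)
Your proposal is correct and matches the paper's own proof, which likewise deduces the corollary by specializing \cref{lemma:homotopy-type-of-Q} to $n=1$ with all $C_{ij}=0$. Your additional remark that path-connectedness of $Q$ is automatic in the single-eye case (since $A_1$ and $B_1$ intersect) is a worthwhile observation that the paper leaves implicit.
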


\begin{proof}
In this case, $n=1$ and $C_{ij}=0$ for all $i,j$, so the corollary follows from \cref{lemma:homotopy-type-of-Q}.
\end{proof}

\subsection{A handle decomposition} \label{sec: a handle decomposition}

The following result, cf.~\cite{CFHS}*{p.~344}, will be used in the proofs of \cref{thm:corks-for-PIs-body,thm:gen-barbell-cork-thm}. 
Let $S$ be a connected, compact, codimension zero submanifold in the interior of a compact 
simply-connected $4$-manifold $M$, such that $M\sm S$ is connected, and $\pi_1(S)$ is a free group of some rank $m$. Consider a handle structure ${\mathcal H}$ on $M\sm S$ without $0$-handles, relative to $\partial S$, and let $S':=S\cup (\text{all $1$-handles of }{\mathcal H})$. The fundamental group $\pi_1(S')$ is free, of rank $n$ equal to $m$ plus the number of $1$-handles of ${\mathcal H}$; let $g_1,\ldots, g_n$ be a set of free generators. Let  ${\mathcal H}'$ be the resulting handle decomposition of $M\sm S'$, relative to $\partial (M\sm S')$, consisting of the $2$-, $3$-, and $4$-handles of $\mathcal H$. 

\begin{lemma}\label{lemma:additional2handles} 
In the set-up described above, stabilize ${\mathcal H}'$ by introducing $n$ canceling $2$-, $3$-handles pairs in a 4-ball near the boundary of $S'$. 
After a sequence of $2$-handle slides, the attaching circles of the newly introduced 2-handles $h_1,\ldots, h_n$ may be assumed to represent the conjugacy classes of the free generators $g_1,\ldots, g_n$ of $\pi_1(S')$.
\end{lemma}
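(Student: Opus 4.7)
The plan is to combine a $\pi_1$-theoretic argument (using $\pi_1(M) = 1$) with a standard Kirby-calculus handle-slide construction, following the approach of \cite{CFHS}.

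First I would produce, for each $i$, a loop $\gamma_i \subseteq \partial S'$ such that $\gamma_i$ represents the conjugacy class of $g_i \in \pi_1(S')$ under the inclusion $\partial S' \hookrightarrow S'$ and is null-homotopic in $M \sm S'$. Assume for simplicity that $\partial S'$ is connected; the Seifert--van Kampen theorem applied to $M = S' \cup_{\partial S'} (M\sm S')$, combined with $\pi_1(M) = 1$, yields
\[
\pi_1(S') *_{\pi_1(\partial S')} \pi_1(M\sm S') \cong 1.
\]
A group-theoretic analysis of this amalgamated product, using that every element of $\pi_1(S')$ is trivial in the pushout, shows that every $a \in \pi_1(S')$ equals $i_*(c)$ for some $c$ in the kernel of the map $\pi_1(\partial S') \to \pi_1(M\sm S')$, where $i_*$ is induced by the inclusion $\partial S' \hookrightarrow S'$. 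In particular, each generator $g_i$ is realized by such a loop $\gamma_i \subseteq \partial S'$; the case where $\partial S'$ is disconnected is handled analogously via a graph of groups argument.

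Next I would use the canceling 3-handle $h_i^+$ together with $2$-handle slides of $h_i$ over the $2$-handles of $\mathcal{H}'$ to transform the attaching circle of $h_i$ into a loop isotopic to $\gamma_i$ in $\partial S'$. Upon introducing the canceling pairs $(h_i, h_i^+)$ in pairwise disjoint $4$-balls near $\partial S'$, the initial attaching circle $\alpha_i$ of $h_i$ is a small unknot, hence null-homotopic in $M\sm S'$. Since $\mathcal{H}'$ has only $2$-, $3$-, and $4$-handles, attached on a collar of $\partial(M\sm S')$, the kernel of $\pi_1(\partial S') \to \pi_1(M\sm S')$ is normally generated by the attaching circles of the $2$-handles of $\mathcal{H}'$. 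Both $\alpha_i$ and $\gamma_i$ lie in this kernel, and a standard Kirby-calculus argument (as in \cite{CFHS}*{p.~344}) produces a sequence of $2$-handle slides of $h_i$ over $2$-handles of $\mathcal{H}'$ taking $\alpha_i$ to a loop isotopic to $\gamma_i$ in $\partial S'$; each slide band-sums the current attaching circle with a parallel of a $2$-handle attaching circle of $\mathcal{H}'$, and suitable sequences realize any loop in this normal closure up to isotopy.

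The main obstacle lies in the first step, specifically the group-theoretic consequence of $\pi_1(M) = 1$ that every element of $\pi_1(S')$ admits a representative loop in $\partial S'$ that is null-homotopic in $M\sm S'$. This is where the simple-connectedness of $M$ is used. Once the target loops $\gamma_i$ are in hand, the handle-slide realization in the second step is routine.
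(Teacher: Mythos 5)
Your proposal shares the key input with the paper---$\pi_1(M)=1$ drives a sequence of $2$-handle slides over the $2$-handles of $\mathcal{H}'$---but it reaches it through a detour, and that detour contains a genuine gap.

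The paper's argument is direct: the $2$-handles of $\mathcal{H}'$ give a presentation $\pi_1(M)\cong\langle g_1,\dots,g_n\mid r_1,\dots,r_k\rangle$ on the free generators of $\pi_1(S')$, and $\pi_1(M)=1$ forces each $g_i$ to lie in the normal closure of the relators $r_j$ in the free group $\pi_1(S')$. Writing $g_i$ as a product of conjugates $\prod_j w_j r_{k_j}^{\pm1}w_j^{-1}$ then prescribes, factor by factor, a sequence of handle slides of $h_i$: one never needs to prefabricate a target loop $\gamma_i\subseteq\partial S'$. Your argument instead first manufactures $\gamma_i$ and then slides $\alpha_i$ onto it, but the construction of $\gamma_i$ is the weak point. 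You assert that ``a group-theoretic analysis'' of the triviality of the van Kampen pushout shows that every element of $\pi_1(S')$ lies in the image of $\ker\bigl(\pi_1(\partial S')\to\pi_1(M\sm S')\bigr)$ under inclusion. This is not a consequence of the pushout being trivial: for example with $A=S_3$, $C=\Z$, $B=1$, $\phi(c)=(12)$ and $\psi$ trivial, the pushout $A*_C B=A/\langle\langle (12)\rangle\rangle$ is trivial, yet $\phi(\ker\psi)=\langle(12)\rangle\neq A$. The claim only follows once one knows that both $\pi_1(\partial S')\to\pi_1(S')$ and $\pi_1(\partial S')\to\pi_1(M\sm S')$ are surjective (whence the pushout becomes $\pi_1(\partial S')/(\ker\phi\cdot\ker\psi)$, and its triviality gives $\ker\phi\cdot\ker\psi=\pi_1(\partial S')$, hence $\phi(\ker\psi)=\pi_1(S')$). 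Those surjectivities do hold in this set-up---$\mathcal{H}'$ has no $0$- or $1$-handles so $\pi_1(\partial S')\onto\pi_1(M\sm S')$, and $S'$ is built from handles of index $\leq 2$ so $\pi_1(\partial S')\onto\pi_1(S')$---but your argument never invokes them, and the purely algebraic statement it leans on is false. A second, more minor issue: handle slides let you control the conjugacy class of the attaching circle in $\pi_1(\partial S')$, not its isotopy class, so ``a loop isotopic to $\gamma_i$'' should read ``a loop freely homotopic to $\gamma_i$''; fortunately the lemma only asks for the conjugacy class in $\pi_1(S')$, so this does not affect the conclusion.
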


\begin{proof} 
Consider the presentation of the trivial group $\pi_1(M)$, given by the generators $g_1,\ldots,g_n$ and relations corresponding to the $2$-handles. (Here the basepoint is connected to the attaching circle of each $2$-handle by an arc; then the attaching circle gives rise to a relation, i.e.\ a word in the free group.) Since $\pi_1(M)=\{ 1\}$, each generator $g_i$ is in the normal closure of the relations, in other words, $g_i$ equals a product of conjugates of the relations. In other words, the trivial element, multiplied by a product of conjugates of the relations, equals $g_i$. Starting with the $i$th newly introduced $2$-handle $h_i$ (whose attaching circle is trivial), implement handle slides guided by the equation in the free group described in the preceding sentence. The result is the desired collection of $2$-handles.
\end{proof}

We will apply this lemma to $S = Q$ in order to augment $Q$ with 2-handles, and obtain a simply-connected submanifold of $M_{1/2}$ containing $Q$. We will also use the lemma in the proof of \cref{thm:Zm-in-a-diff-cork}.

\section{A cork theorem for 1-stably trivial exotic diffeomorphisms}\label{section:1-eyed-cork-thm}

In this section we prove the following theorem.
Let $X$ be a compact, simply-connected smooth 4-manifold, and let $F \colon X\times I \to X \times I$ be a smooth pseudo-isotopy. We consider $X \times I$ as a trivial $h$-cobordism from $X$ to itself. 

\begin{theorem}[Corks for one-eyed pseudo-isotopies]\label{thm:corks-for-PIs-body}
    If $F$ admits a Cerf family with one eye, then there exists a compact, contractible, codimension zero, submanifold $C \subseteq X$ and a smooth isotopy of $F$ rel.\ $X \times \{0\} \cup \partial X \times I$ to a pseudo-isotopy $F' \colon X \times I \to X \times I$ that is supported on $C \times I$. 
\end{theorem}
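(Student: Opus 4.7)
The plan is to combine the Quinn Core Lemma with the handle-theoretic enlargement of \cref{lemma:additional2handles} to produce a contractible $C \subseteq X$ on which $F$ is supported up to isotopy. First, I would apply \cref{lemma:quinn-arc-condition} to put the family in Quinn's arc condition, so that by \cref{lemma:homotopy-type-of-Q-single-eye-case-embedded-arcs} the Quinn core satisfies
\[
Q \simeq S^2 \vee S^2 \vee \bigvee^m S^1
\]
for some $m \geq 0$. Then invoking \cref{lem:Quinn Core Lemma} yields a boundary-preserving isotopy from $F$ to a pseudo-isotopy $F'$ supported on $Q_0 \times I$, where $Q_0 = P \cap (X \times \{0\})$ is the bottom slice of the sub-$h$-cobordism $P$.

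Next, I would identify the homotopy type of $Q_0$ and construct a suitable enlargement inside $X$. Because $P$ is obtained from $Q \times [-\varepsilon,\varepsilon]$ by attaching $5$-dimensional $3$-handles along the $A$- and $B$-spheres (the two $S^2$ factors of $Q$), both spheres become null-homotopic in $P$, and hence $P \simeq \bigvee^m S^1$. Since $P$ is an $h$-cobordism from $Q_0$ to $Q_1$, this gives $Q_0 \simeq \bigvee^m S^1$; in particular $\pi_1(Q_0)$ is free. After (if necessary) tubing $Q_0$ to itself inside $X$ so that $X \setminus \mathring{Q}_0$ is connected, I would choose a handle structure on $X \setminus \mathring{Q}_0$ relative to $\partial Q_0$ with no $0$-handles, and set $S' := Q_0 \cup (\textrm{all 1-handles})$. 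Then $\pi_1(S')$ is free of some rank $n$, with free generators $g_1, \ldots, g_n$, and \cref{lemma:additional2handles} produces $2$-handles $h_1, \ldots, h_n$ in $X$ whose attaching circles represent the $g_i$. Put $C_0 := S' \cup h_1 \cup \cdots \cup h_n$.

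Finally, I would verify that $C_0$, after an iterative boundary-connection step if necessary, is contractible. Since $g_1, \ldots, g_n$ form a $\Z$-basis of $H_1(S') = \Z^n$ and $H_2(S') = 0$, attaching the $h_i$ kills $H_1$ without introducing $H_2$, giving $\pi_1(C_0) = H_1(C_0) = H_2(C_0) = 0$. If $\partial C_0$ is disconnected, I would add $1$-handles in $X$ between the components of $\partial C_0$ and re-apply \cref{lemma:additional2handles} to kill the resulting new $\pi_1$-generators; the result $C$ is simply-connected with $H_2(C) = 0$ and $\partial C$ connected. Lefschetz duality then forces $H_3(C) = H_4(C) = 0$, and the Hurewicz and Whitehead theorems yield contractibility of $C$. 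Since $Q_0 \subseteq C$, the pseudo-isotopy $F'$ is \emph{a fortiori} supported on $C \times I$, which is the desired conclusion. The main obstacle is this contractibility step: killing $\pi_1$ alone does not suffice, and one must separately control $H_2$ (ensured by the generator property of the $g_i$) and the connectivity of $\partial C$ (ensured by the iterative tube-and-cancel procedure).
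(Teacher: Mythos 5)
Your argument is correct, but it takes a genuinely different route from the paper's. The paper augments the Quinn core $Q$ \emph{in the middle-middle level} $M_{1/2} = X \# (S^2 \times S^2)$: it adds $1$- and $2$-handles there to obtain a simply-connected $R \simeq S^2 \vee S^2$, flows $R$ up and down to get a simply-connected sub-$h$-cobordism $U$, argues (using that the critical points cancel algebraically in the simply-connected setting) that $U \simeq D^3 \vee D^3 \simeq \{\ast\}$, and takes $C := U \cap (X \times \{0\})$; contractibility of $C$ then comes for free from $C \simeq U$. You instead flow down \emph{first}, using the $h$-cobordism $P$ to identify $Q_0 \simeq P \simeq \bigvee^m S^1$ inside $X$, and then apply \cref{lemma:additional2handles} directly to $Q_0$ to kill $\pi_1$ there, afterwards arguing contractibility of the resulting $C$ via Hurewicz, Lefschetz duality, and Whitehead. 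Both routes are valid. Your approach is a bit more direct and avoids constructing $R$ and $U$ in the middle level altogether, since the $A$- and $B$-spheres have already been killed by the flow; the trade-off is that you must separately argue $H_3(C) = H_4(C) = 0$ via duality rather than inheriting them from a contractible ambient cobordism. The paper's route carries extra structure that is exploited in \cref{thm:one-eye-thm-full-version} (e.g.\ the handle decomposition of $C$ into $0$-, $1$-, $2$-handles, which comes from the explicit description of $R$ and its surgery). One small remark: your contingency step for disconnected $\partial C_0$ is vacuous, since Lefschetz duality applied to $Q_0$ (which has $H_3(Q_0) = 0$ because $Q_0 \simeq \bigvee^m S^1$) already shows $\partial Q_0$ is connected, and attaching $1$- and $2$-handles cannot disconnect the boundary.
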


\begin{proof}[Proof of \cref{thm:corks-for-PIs-body}] 
The proof is inspired by the proof of the cork theorem for $h$-cobordisms \cites{CFHS, Matveyev}; see also the exposition in \cite{Kirby_corks}. We start with the submanifold $Q$ in the middle-middle level $M_{1/2} \cong X \# (S^{2} \times S^{2})$, defined in \eqref{eq:Q}.
By \cref{lemma:quinn-arc-condition} we may assume that $V$ and $W$ satisfy Quinn's arc condition. 
Recall that the finger discs~$V$ are disjointly embedded in $X \# S^{2} \times S^{2}$, and so are the Whitney discs $W$. If the interiors of $V, W$ are disjoint, $Q$ is simply-connected. In general, the interiors of $V, W$ intersect, and the homotopy type of the Quinn core is given in \cref{lemma:homotopy-type-of-Q-single-eye-case-embedded-arcs} to be $S^2 \vee S^2 \vee  \bigvee^m S^1$, where $m$ is the number of intersections between the interiors of the $V$ and $W$ discs. 
Next we apply the construction of \cref{sec: a handle decomposition} to $S=Q$, obtaining $S'=Q\, \cup 1$-handles, with $S'\simeq S^2\vee S^2\vee \bigvee^{m'} S^1$ for some $m'\geq m$.
Let $\{\gamma_i\}$ be a collection of loops in $S'$ corresponding to the $S^1$ wedge summands. Applying  \cref{lemma:additional2handles} to $S'$ we find a collection of $2$-handles $\{H_i\}_{i=1}^m$ in $M_{1/2} \sm S'$ with attaching curves homotopic to the $\{\gamma_i\}$. Define the submanifold \[R:=S'\cup_{i=1}^{m'} H_i.\] During the process of adding extra 2-handles, no new second homology is introduced and all of the generators of $\pi_1(S')$ are canceled. Hence it follows from \cref{lemma:homotopy-type-of-Q-single-eye-case-embedded-arcs} that $R$ is simply-connected and $R \simeq S^2 \vee S^2$.  
Flowing along $v_{1/2}$ downwards surgers $R$ along the sphere  $A$ and flowing upwards surgers $R$ along the sphere $B$. This gives a simply-connected cobordism $U \subseteq X \times I$. 
The cobordism $U$ is obtained from $R \simeq S^2 \times S^2$ by attaching two 5-dimensional 3-handles, homotopically attaching one 3-handle to each of the $S^2$ wedge summands. Hence $U \simeq D^3 \vee D^3 \simeq \{\ast\}$, i.e.\ $U$ is contractible. Note that $U$ contains the critical points of $q_{1/2}$ by construction and the critical points are algebraically canceling, so $U$ is an $h$-cobordism. Hence $C := U \cap (X \times \{0\})$ is contractible.  
In fact, $U$ is a trivial $h$-cobordism, because using the Whitney discs $W$ (or the $V$) we can arrange that all critical points are in geometrically canceling position. Hence $C =  U \cap (X \times \{0\}) \cong U \cap (X \times \{1\})$.   

By construction, $Q \subseteq U$, and $Q_0 \subseteq C \subseteq X \times \{0\}$.  We can therefore apply \cref{lem:Quinn Core Lemma} to obtain a smooth isotopy from $F$ to $F'$ such that $F' = \Id$ on $(X \sm \mathring{C}) \times I$.  We have succeeded in decomposing $X \times I$ into $(C \times I) \cup ((X \sm \mathring{C}) \times I)$, and isotoping $F$ to $F'$, such that $F'$ is supported on the contractible piece $C \times I$.  
\end{proof}

As a consequence of \cref{thm:corks-for-PIs-body} along with \cref{lemma:n-stable-isotopy-iff-n-eyed-PI} we deduce \cref{thm:one-eye-thm}. It is the special case $m=1$ of the following more general theorem.  


\begin{theorem}[Diffeomorphism cork theorem, version for finite collections]
\label{thm:one-eye-thm-full-version}
Let $X$ be a compact, simply-connected, smooth 4-manifold, and let $\{f_i\}_{i=1}^m$ be a collection of boundary-fixing diffeomorphisms of $X$ such that $f_i$ is 1-stably isotopic to $\Id$ for each $i$.  
    Then there exists a compact, contractible, codimension zero, smooth submanifold $C \subseteq X$, and for each $i=1,\dots,m$ there is a  boundary fixing isotopy of $f_i$ to a  diffeomorphism $f'_i \colon X \to X$ such that $f_i'$ is supported on $C$.  

     Moreover, $C$ can be chosen to be a 4-manifold that admits a handle decomposition into 0-, 1-, and 2-handles. 
\end{theorem}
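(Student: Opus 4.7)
The plan is to reduce to the one-diffeomorphism case, \cref{thm:one-eye-thm} (equivalently \cref{thm:corks-for-PIs-body} together with \cref{lemma:n-stable-isotopy-iff-n-eyed-PI}), and then inductively combine the resulting contractible submanifolds into a single one.

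First, I would apply \cref{thm:one-eye-thm} to each $f_i$ separately, obtaining for each $i$ a compact contractible codimension-zero submanifold $C_i \subseteq X$ and a boundary-fixing isotopy of $f_i$ to a diffeomorphism $\widetilde f_i$ supported on $C_i$. Since each $C_i$ is contractible and (from the proof of \cref{thm:corks-for-PIs-body}) arises as the base of a contractible $h$-cobordism built as a regular neighborhood of a $2$-complex, $C_i$ admits a handle decomposition with only $0$-, $1$-, and $2$-handles.

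Next, I would combine the $C_i$'s by induction on $k$. Suppose a compact contractible $C^{(k-1)} \subseteq X$ with handles only of indices $0$, $1$, and $2$ has been constructed, such that $\widetilde f_1, \dots, \widetilde f_{k-1}$ have each been further isotoped to be supported on $C^{(k-1)}$. To incorporate $f_k$, note that conjugating $\widetilde f_k$ by any ambient isotopy $\phi$ of $X$ produces an isotopic diffeomorphism $\phi \widetilde f_k \phi^{-1}$ supported on $\phi(C_k)$. Choose $\phi$ so that $\phi(C_k) \cap C^{(k-1)} = \emptyset$, and define $C^{(k)}$ to be a regular neighborhood in $X$ of $C^{(k-1)} \cup \gamma \cup \phi(C_k)$, where $\gamma$ is an embedded arc running from a boundary point of $C^{(k-1)}$ to a boundary point of $\phi(C_k)$ through the interior of their complement. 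Then $C^{(k)}$ is homotopy equivalent to the wedge $C^{(k-1)} \vee \phi(C_k)$ and so contractible; it also admits a handle decomposition with only $0$-, $1$-, and $2$-handles, since attaching the tube along $\gamma$ adds a single $1$-handle that can be used to cancel one of the two $0$-handles of $C^{(k-1)} \sqcup \phi(C_k)$. Setting $C := C^{(m)}$ completes the construction, and each $\widetilde f_i$ (in its suitably further conjugated form) is supported on $C$.

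The main obstacle is justifying the disjointness step, namely finding an ambient isotopy $\phi$ of $X$ with $\phi(C_k) \cap C^{(k-1)} = \emptyset$: two compact codimension-zero $4$-submanifolds cannot in general be separated by ambient isotopy. However, the proof of \cref{thm:corks-for-PIs-body} leaves substantial flexibility in the construction of $C_k$, which depends on the choice of pseudo-isotopy for $f_k$ (determined only up to smooth isotopy) and on the Cerf data (determined up to conjugation by ambient diffeomorphisms of $X$). Since $C^{(k-1)}$ is contractible, the open complement $X \sm C^{(k-1)}$ is a nonempty connected open $4$-submanifold carrying all the positive-degree topology of $X$, so one can perform the entire construction of the Quinn core and the associated contractible $h$-cobordism for $f_k$ within a tubular neighborhood of a suitably chosen $2$-complex in $X \sm C^{(k-1)}$, producing a copy of $C_k$ disjoint from $C^{(k-1)}$. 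Making this localization precise constitutes the technical heart of the argument.
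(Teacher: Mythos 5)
Your setup matches the paper's: you apply \cref{thm:one-eye-thm} to each $f_i$ to obtain contractible $C_i$ with $0$-, $1$-, $2$-handles, and you correctly observe that an ambient isotopy of the support $C_i$ can be absorbed into a conjugation of $f_i'$. However, your method for combining the $C_i$ has a genuine gap that you yourself flag: the disjointness step. Two compact codimension-zero $4$-submanifolds generically cannot be separated by ambient isotopy, and your proposed remedy---re-running the Quinn core construction for $f_k$ inside $X \sm C^{(k-1)}$---is not justified. The Quinn core lives in the stabilized middle level $M_{1/2} \cong X \# (S^2 \times S^2)$, not in $X$ itself, and the $2$-handle attaching circles $\gamma$ determined by the Cerf family have no reason to avoid a prescribed contractible subset of $X$. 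Nothing in \cref{thm:corks-for-PIs-body} gives control of where $C_k$ lands relative to a previously chosen submanifold, so ``flexibility of the Cerf data'' does not by itself fill the gap.

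The paper avoids disjointness entirely by exploiting the fact that each $C_i$ has only $0$-, $1$-, and $2$-handles, hence a $2$-dimensional spine. In a $4$-manifold, $2$-complexes in general position intersect in isolated points, so after ambient isotopy (absorbed into the $f_i'$ exactly as you suggest) one may assume the $C_i$ intersect only in their $2$-handles. Then $\bigcup_i C_i$ still has a handle decomposition with handles of index $\leq 2$; since each $C_i$ is contractible and the pairwise intersections consist of finitely many disjoint contractible pieces, Mayer--Vietoris and van Kampen give $\bigcup_i C_i \simeq \bigvee^k S^1$ for some $k$. One then applies \cref{lemma:additional2handles} to $S := \bigcup_i C_i$: add $1$-handles to get $S' \simeq \bigvee^{\ell} S^1$, and then find $2$-handles in $X \sm S'$ whose attaching curves kill the free generators of $\pi_1(S')$, producing a contractible $C \supseteq \bigcup_i C_i$ with only $0$-, $1$-, $2$-handles. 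This replaces the impossible codimension-zero disjointness problem with an always-available general position argument in lower dimension, and then contracts the union rather than tubing disjoint pieces together.
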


\begin{proof}
 By \cref{lemma:n-stable-isotopy-iff-n-eyed-PI}, for each $i$ there is a pseudo-isotopy $F_i \colon X \times I \to X \times I$  from $f_i$ to the identity of $X$ that admits a Cerf family with one eye. 
  We have that $F_i|_{X \times \{ 0\}} = \Id$ and $F_i|_{X \times \{ 1\}} = f_i$. 
  By \cref{thm:corks-for-PIs-body}, there is a compact, contractible submanifold $C_i \subseteq X$ such that $F_i$ is isotopic rel.\ boundary to $F_i'$, where $F_i' = \id$ on $X \sm \mathring{C}_i \times I$. Restricting this isotopy to $X \times \{1\}$ yields an isotopy from $f_i$ to $f_i'$ such that $f_i' = \id$ on $(X \sm \mathring{C}_i) \times \{1\}$. 

Next we show that for each $i$, $C_i$ can be constructed from $0$-, $1$-, and $2$-handles. Our proof is analogous to Matveyev's proof of the analogous fact for corks of $h$-cobordisms.  
  The starting point is a handle decomposition of the Quinn core $Q$
  which has handles of index at most 2. A detailed analysis of the Kirby diagram of a regular neighborhood of $A\cup B\cup W$ is given in  \cite{Matveyev}*{Figures 3-6}; see also \cref{figure:Whitney_move} above. In the present context there is a second collection $V$ of Whitney discs; it is incorporated in a Kirby diagram analogously to $W$. A key feature of the Kirby diagram for this handle decomposition is that the 0-framed unknotted 2-handle corresponding to $A$, taken together with the dotted components corresponding to all $1$-handles, forms an unlink. 
The 4-manifold $C_i$ is obtained by surgering $A$, and hence a Kirby diagram for $C_i$ is obtained by replacing the 0-framed 2-handle by a dotted circle. We obtain a handle decomposition of~$C_i$ with only $0$-, $1$-, and $2$-handles, as desired. 

Next we show that all of the $f_i'$ can be isotoped so as to be supported on a single diff-cork. We use that each $C_i$ is built out of $0$-, $1$-, and $2$-handles. By transversality we may assume that the $C_i$ only intersect in their 2-handles.
To see this note that an ambient isotopy $h_t \colon X \to X$ of the support $C_i$ of a diffeomorphism $f_i'$ can be realized by an isotopy $h_t \circ f_i' \circ h_t^{-1}$ of the diffeomorphism.  
Hence we may assume that $\bigcup_{i=1}^m C_i \simeq \bigvee^k S^1$, for some~$k$.  Note that $\bigcup_{i=1}^m C_i$ still admits a handle decomposition with $0$-, $1$-, and $2$-handles only. 

Apply the method of \cref{sec: a handle decomposition} and \cref{lemma:additional2handles}  to $S:=\bigcup_{i=1}^m C_i$. That is, consider $S'=S\cup 1$-handles, so that $S'\simeq \bigvee^\ell S^1$ for some $\ell\geq k$. Then
find a collection of 2-handles of $X \sm S'$, precisely canceling free generators of $\pi_1(\bigvee^l S^1) \cong \pi_1(S')$. The union $C := S'\, \cup\,  2\text{-handles}$ is a contractible manifold, by the same argument as in the proof of \cref{thm:corks-for-PIs-body}.  Since each $C_i$ is contained in $C$, each diffeomorphism $f_i'$ is supported on $C$, as asserted. 
\end{proof}

\section{The sum square move and the associated \texorpdfstring{$\pi_2(X)$}{second homotopy group} element}\label{section:sum-square-move-and-pi2-element}

In this section, in preparation for the proof of \cref{thm:gen-barbell-cork-thm}, we recall Quinn's sum square move, and we note its effect on certain elements of $\pi_2(X)$ determined by a Cerf family.

\subsection{The sum square move}\label{subsection:sum-square-move}

Quinn's sum square move \cite{Quinn:isotopy}*{Section~4.2} gives rise to a deformation of a pseudo-isotopy. 
We consider a 1-parameter family in nested eye form. In the middle-middle level $X \#^n (S^2 \times S^2)$ we have the data of two collections of embedded spheres $A$ and $B$, which intersect each other transversely. We have finger discs $V$ and Whitney discs $W$, such that each collection of discs cancels all the excess intersections between the $A$ and $B$ spheres. 

We describe the sum square move in the middle-middle level.
Quinn~\cite{Quinn:isotopy}*{Section~4.2}  justified why the sum square move gives a deformation of the pseudo-isotopy. 
The move alters either the finger or Whitney discs, and their boundaries. We will explain the version that alters the Whitney discs.  

To implement the sum square move, we need a framed embedded square~$S$ in the middle-middle level, with the interior of $S$ disjoint from $A \cup B \cup W$. 
The square must have two edges on two distinct $W$ discs (labeled $W_1$ and $W_2$ in Figure \ref{figure:sum_square}), one edge on $A$, and one on $B$. New $W$ discs are obtained by cutting $W_1$ and $W_2$ 
along the boundary edges of the sum square $S$, and gluing in two parallel copies of $S$. In one possible arrangement, the effect of the move on the boundaries of the discs, on $A$ and~$B$ spheres, is illustrated in Figure \ref{figure:sum_square_boundaries}. Note that $+$ and $-$ intersection points are still paired up after the move.  

\begin{figure}[ht]
\centering
\includegraphics[height=3.2cm]{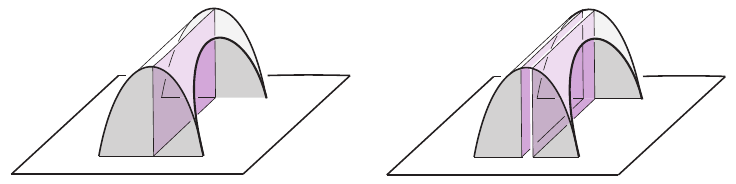}
{\scriptsize
\put(-257,75){$B$}
\put(-250,15){$A$}
\put(-305,30){$W_1$}
\put(-252,50){$W_2$}
\put(-266,50){$S$}
}
\caption{The sum square move along the sum square $S$ shown in purple.}
\label{figure:sum_square}
\end{figure}

\begin{figure}[ht]
\centering
\includegraphics[height=2.3cm]{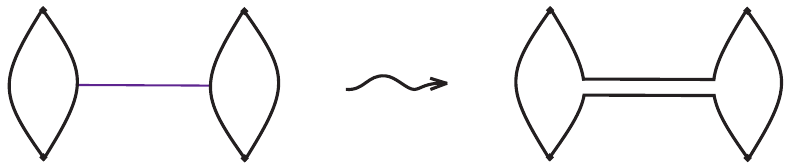}
\put(-290,63){$+$}
\put(-288,0){$-$}
\put(-212,63){$-$}
\put(-210, 0){$+$}
{\small 
\put(-281,46){$\partial W_1$}
\put(-258,21){$\partial S$}
\put(-323,46){$\partial V_1$}
\put(-246,46){$\partial W_2$}
\put(-201,46){$\partial V_2$}
}
\caption{Rearranging the boundaries of the finger and Whitney discs, on $A$ and on $B$, using the sum square move.}
\label{figure:sum_square_boundaries}
\end{figure}

Figure \ref{figure:sum_square}, closely following Quinn's figure in \cite{Quinn:isotopy}*{Section~4.2}, depicts a $3$-dimensional model for the sum square. Here we see $A$, $W_1$, and  $W_2$, together with a neighborhood of the arc of $\partial S$ in $B$ lie in  ${\mathbb R}^3\times\{ 0\}\subseteq {\mathbb R}^3\times {\mathbb R}$. The rest of $B$ extends into the past and the future. The framing of $S$ along its boundary is determined in the $3$-dimensional model by a non-vanishing vector field on $\partial S$ which is normal to $S$ and tangent to $A$, $B$, and the $W$ discs.  This framing has to admit an extension over $S$ for the move to yield embedded $W$ discs.

In applications, one has to work to find a sum square $S$ satisfying the conditions laid out above. Making use of dual spheres, which  one can always find in a pseudo-isotopy, Quinn~\cite{Quinn:isotopy}*{Section~4.2} shows how to obtain a sum square produced from an arbitrary choice of null-homotopy of the boundary square. From this one can produce an embedded sum square that is framed and whose interior is disjoint from $A \cup B \cup W$ as desired.

\subsection{Creating a single circle}\label{subsection:creating a single circle}

Here is an initial use of the sum square move. 
Consider two spheres $A_i$ and $B_j$ with $i \neq j$ fixed. We write $V_{ij}$ (respectively $W_{ij}$) for the collection of finger (respectively Whitney) discs pairing intersections of $A_i$ and $B_j$. The algebraic intersection number $\lambda(A_i,B_j)$ vanishes, since $i \neq j$. It follows, assuming genericity, that the intersection $(\partial V_{ij} \cup  \partial W_{ij}) \cap A_i$  (and similarly $(\partial V_{ij} \cup  \partial W_{ij}) \cap B_j$) is  the image of a generic immersion $\sqcup^k S^1 \looparrowright A_i$ (respectively $\sqcup^k S^1 \looparrowright B_j$), for some $k \geq 0$.

\begin{lemma}\label{lemma:assume-one-circle-using-sum-square}
After a deformation of the family, we can arrange that $k=1$. 
 \end{lemma}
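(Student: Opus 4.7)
The plan is to realize the $k$ circles as the connected components of a $2$-regular graph on $A_i$, and to use sum square moves to merge pairs of components into one, reducing $k$ by one at each step until a single circle remains.

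First I would observe that since $\lambda(A_i, B_j) = 0$ and $i\neq j$, the intersections $A_i \cap B_j$ come in canceling pairs, and each such point is the endpoint of exactly one arc in $\partial V_{ij} \cap A_i$ and exactly one arc in $\partial W_{ij} \cap A_i$. Thus $(\partial V_{ij} \cup \partial W_{ij}) \cap A_i$ is a $2$-regular graph on $A_i$ whose edges alternate between $V$- and $W$-type, and whose $k$ connected components are precisely the immersed circles of the statement. The analogous graph on $B_j$ is obtained in the same way.

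Suppose $k \geq 2$. Each component has even length and so contains at least one $W$-arc, so one can pick Whitney discs $W_1, W_2 \in W_{ij}$ with $\partial W_1 \cap A_i$ and $\partial W_2 \cap A_i$ in different components of the graph. The goal is then to produce a framed, embedded sum square $S \subseteq M_{1/2}$ whose four boundary edges lie on $A_i$, $B_j$, $W_1$, and $W_2$, whose interior is disjoint from $A \cup B \cup V \cup W$, and whose edge $\partial S \cap A_i$ is an arc from a point of $\partial W_1 \cap A_i$ to a point of $\partial W_2 \cap A_i$. Applying the sum square move of \cref{subsection:sum-square-move} along $S$ then reconnects the four endpoints of $\partial W_1 \cap A_i$ and $\partial W_2 \cap A_i$ crosswise, which amounts to a surgery on the $2$-regular graph that replaces the two chosen $W$-edges (lying in different components) by two $W$-edges crossing between them. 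This merges the two components into a single cycle, reducing $k$ by one. Iterating terminates at $k = 1$.

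The main obstacle is producing the sum square with the required framing and disjointness properties. For this I would invoke Quinn's construction in \cite{Quinn:isotopy}*{Section~4.2}, which uses the geometric dual spheres available in the middle level of a pseudo-isotopy to upgrade an arbitrary null-homotopy of the prescribed boundary square to an embedded, framed sum square with interior disjoint from $A \cup B \cup V \cup W$. Granted this, the combinatorial effect on the graph is as described above, and the lemma follows.
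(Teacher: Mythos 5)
Your proposal is correct and follows essentially the same approach as the paper: pick Whitney discs $W_1, W_2 \in W_{ij}$ whose boundary arcs lie on distinct circles, join a point in the interior of $\partial W_1 \cap A_i$ to a point in the interior of $\partial W_2 \cap A_i$ by an embedded arc $\sigma_A$ in $A_i$ (and similarly $\sigma_B$ in $B_j$), upgrade these to a framed embedded sum square by Quinn's dual-sphere argument, and observe that the resulting move merges two circles into one. The only detail the paper adds that you omit is that the arcs $\sigma_A, \sigma_B$ must be chosen so that the post-move Whitney discs still pair $+$ and $-$ intersection points, but this is a routine refinement of the same construction, and your $2$-regular graph framing of the combinatorics is a clean way of seeing that the circle count drops by one.
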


\begin{proof}
    Consider two generically immersed circles $\gamma_1$ and $\gamma_2$ on $A_i$, which are a subset of $(\partial V_{ij} \cup  \partial W_{ij}) \cap A_i$.  Each of these circles $\gamma_\ell$, for $\ell \in \{1,2\}$, comprises a union of disjointly embedded arcs $\gamma_\ell^V$ from $\partial V_{ij}$ and a union of disjointly embedded arcs $\gamma_\ell^W$ from $\partial W_{ij}$.  We also have that $\gamma_1^V \cap \gamma_2^V = \emptyset$ and $\gamma_1^W \cap \gamma_2^W = \emptyset$.  There can be an uncontrolled number of intersections between $\gamma_i^V$ and $\gamma_j^W$, for each nonempty subset $\{i,j\} \subseteq \{1,2\}$.  

By taking the union of the finger and Whitney discs corresponding to $\gamma_1 \cup \gamma_2$, and considering their intersection with $B_j$, we have an analogous situation on $B_j$, consisting of generically immersed circles $\delta_1$ and $\delta_2$, expressed as a union of arcs $\delta_\ell = \delta_\ell^V \cup \delta_\ell^W$.   
    
We will show how to combine $\gamma_1$ and $\gamma_2$ into a single circle using the sum square move. 
Since $\partial W_{ij} \cap A_i$ is a disjoint union of embedded arcs in $A_i$, we have that $A_i \sm (\partial W_{ij} \cap A_i)$ is path connected. Hence we can join a point in the interior of one arc in $\gamma_1^W$, to a point in the interior of an arc in $\gamma_2^W$, via a smoothly embedded arc $\sigma_A$ in $A_i$ whose interior lies in $A_i \sm \partial W$, and which abuts to $\gamma_1^W \cup \gamma_2^W$ transversely. 
Similarly, we can find an arc $\sigma_B$ on $B_j$, joining the other boundary arcs of the same pair of Whitney discs. 

These arcs $\sigma_A$ and $\sigma_B$ form two sides of the boundary of a sum square. By \cref{subsection:sum-square-move}, or \cite{Quinn:isotopy}*{Section~4.2}, we can complete this to a sum square $S$ that is framed and embedded with interior disjoint from $A \cup B \cup W$. 
We choose the arcs $\sigma_A$ and $\sigma_B$ in such a way that after the sum square move, we obtain Whitney discs pairing
$+$ and $-$ double points of $A_i \cap B_j$.   Then performing the sum square move yields a deformation of the family to one where $(\partial V_{ij} \cup  \partial W_{ij}) \cap A_i$  and $(\partial V_{ij} \cup  \partial W_{ij}) \cap B_j$ both consist of one fewer circles than before the move. Iterating the procedure we reduce to the case of a unique circle, i.e.~$k=1$. 
\end{proof}

\subsection{An element of \texorpdfstring{$\pi_2(X)$}{the second homotopy group} associated to each circle}\label{subsection:element-pi2}

We describe the $\pi_2$ elements associated to the circles $\gamma_i$ as described in \cref{subsection:creating a single circle} and we show how they add when we do the sum square to combine two circles. 

Each disc $U_k$ in the union $V \cup W$ has an arc $\partial_A U_k$ that lies on an $A$-sphere and an arc $\partial_B U_k$ on a $B$ sphere. The arcs $\partial_A U_k$ and $\partial_B U_k$ intersect at their endpoints, which lie in~$A \pitchfork B$.

Suppose we have a collection of discs $\{U_{k_1},\dots,U_{k_m}\}$ taken from the finger and Whitney discs $\{U_k\}$, such that \[\gamma_A:= \bigcup_{\ell=1}^m \partial_A U_{k_\ell}\] is an immersed circle $\gamma_A$ that lies on some sphere in the middle-middle level, $A_i \subseteq M_{1/2} = X \#^n (S^2 \times S^2)$.  
Recall that the arcs on $A_i$ coming from $V$ discs are mutually disjoint, as are the arcs coming from $W$ discs. However, the two collections of arcs may meet on $A_i$.  The $U_{k_\ell}$ all pair up intersections with the same $B$ sphere, $B_j$ say, and the union \[\gamma_B:= \bigcup_{\ell=1}^m \partial_B U_{k_\ell}\] is an immersed circle $\gamma_B \subseteq B_j$. 

Using that $A_i$ is simply-connected, choose a null-homotopy \[\Delta_A \colon D^2 \to A_i\] for $\gamma_A$, and a null-homotopy \[\Delta_B \colon D^2 \to B_j\] for $\gamma_B$. We consider a map 
\[\psi_{i,j} \colon S^2 \to X\#^n (S^2 \times S^2)\]
of a 2-sphere in the middle-middle level, obtained by gluing $\Delta_A \colon D^2 \to A_i$ and $\Delta_B \colon D^2 \to B_j$ to the union $\bigcup_{\ell=1}^m  U_{k_\ell}$, as described in Figure \ref{figure:Belted_sphere}.

Let us describe this in more detail. We take a union $\sqcup^m D^2$ corresponding to $\{U_{k_\ell}\}_{\ell=1}^m$, identify $(0,1)$ in the $\ell$th disc with $(-1,0)$ in the $(\ell+1)$st, for $\ell=1,\dots,m$ and do the same with the $m$th and the $1$st. Then we embed this quotient space $\sqcup^m D^2/\sim$ around the equator of $S^2$ by a map $E \colon \sqcup^m D^2 \to S^2$, in such a way that the complement of the image consists of two open discs. We define the map $\psi_{i,j}$ on the image of $E$ by the composite: first send $E(x)$ to $x \in \sqcup^m D^2$ (or some other $y \in E^{-1}(E(x))$), and then use the map \[\sqcup^m D^2 \to (\sqcup^m D^2/\sim) \to \bigcup_{\ell =1}^m U_{k_\ell} \subseteq X\#^n (S^2 \times S^2),\] 
where the last map sends the $\ell$th disc to $U_{k_\ell}$. 
We extend the map $\psi_{i,j}$ to all of $S^2$ using~$\Delta_A$ and~$\Delta_B$. This completes the description of the map $\psi_{i,j}$. 

\begin{figure}[ht]
\centering
\includegraphics[height=3.8cm]{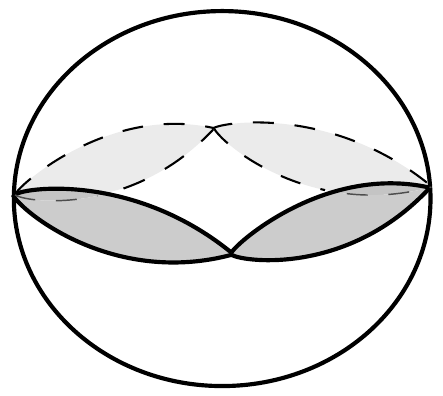}
{\small
\put(-67,86){$\Delta_A$}
\put(-67,16){$\Delta_B$}
}
\caption{$S^2=\cup_{\ell=1}^m U_{k_l}\cup \Delta_A\cup \Delta_B$. The discs $U_{k_\ell}$ are shaded.}
\label{figure:Belted_sphere}
\end{figure}

Note that there are many choices for the null-homotopies $\Delta_A$ and $\Delta_B$, and altering the choice made can change the homotopy class of $\psi_{i,j}$. 

Now consider the degree one map 
\[J \colon X \#^n (S^2 \times S^2) \to X\]
defined by sending $\#^n (S^2 \times S^2) \sm \mathring{D}^4 \to D^4$. The induced map $J_* \colon \pi_2(X \#^n (S^2 \times S^2)) \to \pi_2(X)$ has the effect of sending the homotopy classes that are supported in $\#^n (S^2 \times S^2)$ to $0$.  Define
\[\theta_{i,j} := J \circ \psi_{i,j} \colon  S^2 \to X.\]
To fully determine this map we need to fix an orientation convention. Fix once and for all an orientation of $S^2$, and as before use the orientations of each sphere $A_i$ and $B_j$, such that the intersection numbers $\lambda(A_i,B_i) = +1$ for each $i$. Each finger or Whitney disc, pairs two double points, one with $+$ intersection sign and one with $-$ intersection sign. We fix an orientation of each finger or Whitney disc $U_k$ by orienting the tangent space at the $+$ intersection point, denoted $p_k^+$. For the first tangent vector, choose a nonzero vector in $T_{p_k^+}\partial_A U_k \subseteq T_{p_k^+}A$, pointing into the interior of $\partial_A U_k$. For the second tangent vector, choose a nonzero vector in $T_{p_k^+}\partial_B U_k \subseteq T_{p_k^+}B$, pointing into the interior of $\partial_B U_k$. This determines an orientation of $TU_k$. These orientations are consistent for each disc $U_{k_{\ell}}$ used in the construction of the map $\psi_{i,j}$, and hence we fix $\theta_{i,j}$ on the nose, and not just up to sign.

\begin{lemma}
The homotopy class of $\theta_{i,j} \in \pi_2(X)$ is independent of the choice of $\Delta_A$ and~$\Delta_B$.    
\end{lemma}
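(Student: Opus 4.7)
The plan is to show that changing either null-homotopy alters $\psi_{i,j}$ by a homotopy class supported in one of the spheres $A_i$ or $B_j$, and that such classes are killed by $J_*$. The key fact is that under the identification of \cref{construction:identification-of-the-middle-level}, both $A_i$ and $B_j$ are isotopic in $M_{1/2}$ to spheres lying in a single $S^2 \times S^2$ connect-summand, which $J$ collapses to a $4$-disc.

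First I would treat the two null-homotopies separately. Fix $\Delta_B$ and let $\Delta_A' \colon D^2 \to A_i$ be another null-homotopy of $\gamma_A$. The map $\psi_{i,j}'$ built from $\Delta_A'$ agrees with $\psi_{i,j}$ on the equatorial region $\bigcup_\ell U_{k_\ell}$ and on the cap $\Delta_B$, so the difference $[\psi_{i,j}'] - [\psi_{i,j}] \in \pi_2(M_{1/2})$ is represented by the $2$-sphere obtained by gluing $\Delta_A$ and $\overline{\Delta_A'}$ along $\gamma_A$. This sphere has image in $A_i$, so the difference lies in the image of the map $\pi_2(A_i) \to \pi_2(M_{1/2})$ induced by inclusion. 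An analogous argument with $A_i$ and $B_j$ swapped handles a change of $\Delta_B$.

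Next I would locate $A_i$ inside $M = X_\gamma$. By \cref{construction:identification-of-the-middle-level}, $A_i^{1/4}$ is the belt sphere $\{\mathrm{pt}\} \times S^2$ of the $i$th surgery, lying entirely in the $i$th $S^2 \times S^2$ summand. Since the regular homotopy on $[1/4,1/2]$ restricts to an isotopy of each individual $A$-sphere, $A_i = A_i^{1/2}$ remains isotopic in $M_{1/2}$ to this belt sphere; the analogous statement holds for $B_j$. Because $J$ collapses $\#^n(S^2 \times S^2) \setminus \mathring{D}^4$ to a $4$-disc, the composites $A_i \hookrightarrow M_{1/2} \xrightarrow{J} X$ and $B_j \hookrightarrow M_{1/2} \xrightarrow{J} X$ are null-homotopic. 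Hence $J_*$ annihilates the images of $\pi_2(A_i)$ and $\pi_2(B_j)$ in $\pi_2(M_{1/2})$, which gives $J_*[\psi_{i,j}'] = J_*[\psi_{i,j}]$ in $\pi_2(X)$.

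The only point requiring any care is the identification of $A_i$ and $B_j$ with standard spheres in the connect-sum region; this is immediate from \cref{construction:identification-of-the-middle-level} together with the observation that, in the Cerf family, each individual $A$-sphere and each individual $B$-sphere moves only by isotopy between $t = 1/4$ and $t = 1/2$, while the finger and Whitney moves affect only the $A$-$B$ intersection pattern.
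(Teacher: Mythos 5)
Your proposal is correct and takes essentially the same approach as the paper's own proof. The paper's two-sentence argument observes that the difference in choices of $\Delta_A$ (resp.\ $\Delta_B$) is a multiple of $[A_i]$ (resp.\ $[B_j]$), and that these classes lie in $\ker J_*$ (the paper writes ``$\ker F_*$'', evidently a typo for $J_*$); you supply the additional detail of why this kernel membership holds, by using \cref{construction:identification-of-the-middle-level} to isotope $A_i$ and $B_j$ into the $\#^n(S^2\times S^2)$ connected-sum region that $J$ collapses.
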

  \begin{proof}
      The difference in any two choices for $\Delta_A$ represents a multiple of the class of $[A] \in \pi_2(X \#^n (S^2 \times S^2))$, 
      and similarly for $\Delta_B$ and $B$.    However $[A]$ and $[B]$ belong to $\ker J_*$, so the image $J_*(\psi_{i,j})$ is unaffected by the choices. 
  \end{proof}

Now we suppose that there are two circles $\gamma_A^1$ and $\gamma_A^2$ on $A_i$ and two circles $\gamma_B^1$ and~$\gamma_B^2$ on~$B_j$ corresponding to the boundaries of distinct collections of discs in $V \cup W$. Assume that $\gamma_A^1$ and $\gamma_B^1$ cobound a collection of discs, $\{U_{k_1}^1,\dots,U_{k_m}^1\}$, and similarly $\gamma_A^2$ and $\gamma_B^2$ cobound a collection of discs $\{U_{k_1}^2,\dots,U_{k_p}^2\}$.   We also choose null-homotopies $\Delta_A^1 \colon D^2 \to A_i$ for $\gamma_A^1 \subseteq A_i$, $\Delta_B^1 \colon D^2 \to B_j$ for $\gamma_B^1 \subseteq B_j$, $\Delta_A^2 \colon D^2 \to A_i$ for $\gamma_A^2 \subseteq A_i$, and $\Delta_B^2 \colon D^2 \to B_j$ for $\gamma_B^2 \subseteq B_j$. 
Let $\theta_{i,j,1}$ and $\theta_{i,j,2}$ denote the resulting elements of $\pi_2(X)$, with $\theta_{i,j,1}$ corresponding to  $\gamma_A^1$  and  $\gamma_B^1$, and $\theta_{i,j,2}$ corresponding to  $\gamma_A^2$  and  $\gamma_B^2$. 

Without loss of generality, suppose that we do the sum square move using a Whitney disc $W_1$ in the first collection of discs, and a Whitney disc $W_2$ in the second collection. Let $\theta_{i,j} \in \pi_2(X)$ denote the element corresponding to the single circle of finger/Whitney arcs (on each of $A_i$ and $B_j$) that arises after the sum square move.

\begin{lemma}\label{lemma:pi_2-elements-adding}
    We have that $\theta_{i,j} = \theta_{i,j,1} + \theta_{i,j,2} \in \pi_2(X)$. 
\end{lemma}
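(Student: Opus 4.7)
The plan is to recognize the new belted sphere $\psi_{i,j} \colon S^2 \to X \#^n(S^2 \times S^2)$ produced after the sum square move as a connected sum of the two belted spheres $\psi_{i,j,1}$ and $\psi_{i,j,2}$ in the simply-connected middle-middle level. Since the connected sum operation on embedded $2$-spheres in a simply-connected $4$-manifold realizes addition in $\pi_2$, this identification gives $\psi_{i,j} = \psi_{i,j,1} + \psi_{i,j,2}$ in $\pi_2(X \#^n(S^2 \times S^2))$. Applying the homomorphism $J_*$ then yields the desired equality $\theta_{i,j} = \theta_{i,j,1} + \theta_{i,j,2}$.

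To set up the identification, first I would analyze the effect of the sum square move on the disc data. Decompose $\partial S = a \cup b \cup c \cup d$, with $a \subseteq W_1$, $b \subseteq A_i$, $c \subseteq W_2$, $d \subseteq B_j$, and let $S_+$ and $S_-$ denote the two parallel copies of $S$ used in the move. The new Whitney discs $W_1'$ and $W_2'$ replace $W_1$ and $W_2$, while the remaining discs in the two collections are unchanged. On $A_i$, the two circles $\gamma_A^1$ and $\gamma_A^2$ merge into a single circle $\gamma_A$ which is their band sum along $b$, and analogously on $B_j$ the circle $\gamma_B$ is the band sum of $\gamma_B^1$ and $\gamma_B^2$ along $d$. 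Since the homotopy class of a belted sphere does not depend on the choice of null-homotopies, I would fix $\Delta_A$ to be the band sum of $\Delta_A^1$ and $\Delta_A^2$ along $b$, and similarly $\Delta_B$ to be the band sum of $\Delta_B^1$ and $\Delta_B^2$ along $d$. With these choices, the image of $\psi_{i,j}$ in $X \#^n(S^2 \times S^2)$ is, up to a deformation supported in an arbitrarily small neighborhood of the arcs $a, b, c, d$, exactly the union of the images of $\psi_{i,j,1}$ and $\psi_{i,j,2}$ joined by the thin tube $S_+ \cup S_-$. Collapsing this tube to a standard thin connecting cylinder with one end on $\psi_{i,j,1}(S^2)$ and the other on $\psi_{i,j,2}(S^2)$ exhibits $\psi_{i,j}$ as the connected sum $\psi_{i,j,1} \# \psi_{i,j,2}$.

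The main technical step is to verify that the orientations match so that the identification gives addition rather than subtraction. Each finger or Whitney disc was oriented via the tangent-space convention at its $+$ intersection point (first vector along the $A$-arc, second along the $B$-arc), and I would trace through how these orientations on $W_1$ and $W_2$ propagate to $W_1'$ and $W_2'$ after the cut-and-paste operation with the parallel copies $S_+$ and $S_-$. In particular, one must check that the framing of $S$ used to produce $S_+$ and $S_-$ induces compatible orientations on the two glued-in pieces so that the resulting class on $\psi_{i,j}$ agrees with the sum of the classes of $\psi_{i,j,1}$ and $\psi_{i,j,2}$ under the standard connected-sum identification. Once this orientation check is complete, the proof concludes by applying the group homomorphism $J_*$:
\[\theta_{i,j} = J_* \psi_{i,j} = J_* \psi_{i,j,1} + J_* \psi_{i,j,2} = \theta_{i,j,1} + \theta_{i,j,2}.\]
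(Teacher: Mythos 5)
Your proof is correct and takes essentially the same approach as the paper: both recognize the sum square move as effecting an ambient connected sum of the two belted spheres (along a tube built from the parallel copies $S_\pm$ and band-sum strips in $A_i$ and $B_j$, which you fold into the choice of $\Delta_A$, $\Delta_B$), and both defer the sign verification to an orientation check against the model in the sum-square figure before applying $J_*$.
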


\begin{proof}
We assume the null-homotopies $\Delta_A^i$ and $\Delta_B^i$ are chosen such that, near the boundary, and with respect to the model sum square in \cref{figure:sum_square}, they move away from $W_1$ and $W_2$ in the opposite direction to $S$. This can be arranged by changing the choice of null-homotopies for $\gamma_A^k$ and $\gamma_B^k$, $k = 1,2$, if necessary. 

The sum square move removes a square from the disc $W_1$ (a neighborhood of the close edge in \cref{figure:sum_square}, the one that lies in $W_1$, and removes a square from the disc $W_2$ (a neighborhood of the far edge in \cref{figure:sum_square}, the edge that lies in $W_2$). The sum square move also combines the null-homotopies $\Delta_A^1$ and $\Delta_A^2$ with a strip in $A_i$ (near the lower edge of the sum square in \cref{figure:sum_square}), and it combines the null-homotopies $\Delta_B^1$ and $\Delta_B^2$ with a strip in $B_j$ (near the upper edge of the sum square in \cref{figure:sum_square}).  The union of these two strips and two copies of the sum square form a tube (with square cross section), $\partial I^2 \times I$. The result of the sum square is therefore exactly to perform an ambient connected sum of the two 2-spheres representing $\theta_{i,j,1}$ and $\theta_{i,j,2}$. 

A careful analysis of the orientation convention and the model sum square move in \cref{figure:sum_square} yields that the classes add (rather than taking their difference). 
\end{proof}

\begin{remark}
    If $X$ were not simply-connected, then a proof of \cref{lemma:pi_2-elements-adding} would presumably need a careful analysis of basing paths. 
\end{remark}
 
\section{Many-eyed diffeomorphisms are supported in a null-homotopic homotopy wedge of 2-spheres}\label{section:many-eyed-theorem}

When $f \colon X \to X$ must be stabilized by more than one copy of $S^2 \times S^2$ in order to smoothly trivialize it, equivalently when all pseudo-isotopies for $f$ must have more than one eye, we do not know how to find a contractible diff-cork, but we can prove the following theorem.

\genbarbellthm*

\begin{remark}
    Kronheimer-Mrowka~\cite{Kronheimer-Mrowka-K3} showed that the Dehn twist on the connect-sum $S^3$ in $K_3 \# K_3$, denoted  $D \colon K_3 \# K_3 \to K_3 \# K_3$, is not isotopic to the identity, and Jianfeng Lin~\cite{Lin-dehn-twist-stabn} showed that this continues to hold even after one stabilization by $S^2 \times S^2$. This diffeomorphism is stably isotopic to the identity by~\cites{Quinn:isotopy,gabai20223spheres} and is topologically isotopic to the identity by work of Kreck, Perron, and Quinn~\cites{Kreck-isotopy-classes,Perron,Quinn:isotopy,GGHKP} (see \cref{thm:TFAE-isotopy-notations}).  But we cannot apply \cref{thm:one-eye-thm} to this diffeomorphism, whereas \cref{thm:gen-barbell-cork-thm} does apply to it.  

Here is a related observation. Suppose that $D$ becomes isotopic to the identity after connected summ with $n$ copies of $S^2 \times S^2$ (we are not sure what the minimal $n$ is). Then $D \# \Id \colon K_3 \# K_3 \#^{n-1} (S^2 \times S^2)$ admits a diff-cork. It is not clear that this diff-cork can be isotoped into the $K_3 \# K_3$ summands. 
\end{remark}

The following statement will be used in the proof of \cref{thm:gen-barbell-cork-thm};  here we use the notation $\theta_{k,\ell}\in\pi_2(X)$ introduced in \cref{subsection:element-pi2}.

\begin{lemma}\label{lemma:trivial-PI-with prescribed-pi2-element}
    Let $x \in \pi_2(X)$, let $n \in \mathbb{N}_0$, and fix $i \neq j$ in $\{1,\dots,n\}$. There exists a trivial pseudo-isotopy  with $n$ eyes such that $\theta_{i,j} =x$ and $A_k \cap B_\ell = \emptyset$ 
    $($and hence $\theta_{k,\ell}=0)$
    for  $(k,\ell) \neq (i,j)$ and $k \neq \ell$.  We also require that $A_k \pitchfork B_k$ is exactly one point, for each $k$. Moreover, we can assume that in the middle-middle level $|A_i \pitchfork B_j| =3$, and the boundaries of the $V$ and $W$ discs form a circle on $A_i$ and a circle on $B_j$.  
\end{lemma}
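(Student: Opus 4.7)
The plan is to build the Cerf family by hand in two stages: first assemble a baseline $n$-eyed trivial pseudo-isotopy with disjoint eyes, and then modify the $(A_i, B_j)$-pair in the middle-middle level to install the prescribed $\pi_2$-class.

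For the baseline, start from the constant family $(q_0, v_0)$, which is the identity pseudo-isotopy, and insert $n$ independent standard-model birth/death eyes in $n$ disjoint $5$-balls in $X \times I$. Each eye produces a cancelling pair of index-$2$ and index-$3$ critical points, whose ascending sphere $A_k$ and descending sphere $B_k$ live in a distinct $S^2 \times S^2$ summand of $M_{1/2} = X \#^n(S^2 \times S^2)$ and meet transversely at a single point. Locality forces $A_k \cap B_\ell = \emptyset$ for $k \neq \ell$, and since the Cerf family is a concatenation of local pieces sandwiched by the constant family, integrating the resulting gradient-like vector field gives a pseudo-isotopy isotopic rel.\ boundary to the identity.

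To modify the $(i,j)$-pair, represent $x \in \pi_2(X)$ by an immersed sphere $\sigma_x \colon S^2 \looparrowright X$, and push $\sigma_x$ off the stabilizing $S^2 \times S^2$ summands, arranging it disjoint from $A \cup B$ by tubing into the geometric dual spheres (using that $B_k$ is dual to $A_k$) when necessary. Near $t=1/2$, introduce finger moves of $A_i$ through $B_j$ along arcs in $M_{1/2}$, producing the prescribed number of cancelling pairs of intersections equipped with finger discs $V_{ij}$. Then, just after $t=1/2$, introduce the matching Whitney moves paired by Whitney discs $W_{ij}$, initially taken as parallel copies of the $V_{ij}$ discs, and tube one of them over $\sigma_x$ along an arc supported away from $A \cup B$ and the other discs. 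The tubing alters the homotopy class (rel.\ boundary) of this Whitney disc in $X$ by $x$ without creating new $A$- or $B$-intersections; by choosing the combinatorics of the arcs on $A_i$ and $B_j$, we arrange that the boundary arcs of $V_{ij}$ and $W_{ij}$ glue into single embedded circles on each of $A_i$ and $B_j$, realizing the specified intersection count, while all other pairs remain as in the baseline.

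Applying the formula of \cref{subsection:element-pi2}, $\theta_{i,j}$ is represented by $J_*$ applied to the $2$-sphere $V_{ij} \cup W_{ij} \cup \Delta_A \cup \Delta_B$, with small null-homotopies $\Delta_A \subseteq A_i$ and $\Delta_B \subseteq B_j$ chosen inside the $S^2\times S^2$ summands. The untubed portion collapses to a null-homotopic sphere in $X$, and the tube over $\sigma_x$ contributes precisely $x$, yielding $\theta_{i,j}=x$; all other $\theta_{k,\ell}$ vanish since those pairs are disjoint. The main obstacle is verifying that the modified Cerf family still yields a trivial pseudo-isotopy. The introduced finger moves and their matching Whitney moves come as an inverse pair in time, so before the tubing they cancel and the resulting family is visibly a deformation of the baseline. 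The tubing of $W_{ij}$ over $\sigma_x$ is supported in $M_{1/2}$ and can be absorbed into an ambient isotopy of $M_{1/2}$ in a neighborhood of $\sigma_x$; alternatively, since $\pi_1(X)=1$ the Hatcher-Wagoner obstructions vanish, so a Hatcher-Wagoner deformation from the baseline trivial family exists realizing the desired middle-level geometry. Either viewpoint confirms that the constructed pseudo-isotopy is isotopic to the identity rel.\ boundary, completing the construction.
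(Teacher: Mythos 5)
Your construction of the Cerf family closely parallels the paper's: start from a trivial $n$-eyed family with disjoint eyes, introduce a finger move/Whitney move pair between $A_i$ and $B_j$, and tube the Whitney disc over a representative of $x\in\pi_2(X)$. However, there are two gaps, one minor and one serious.

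The minor gap: after tubing $W$ over an immersed sphere $\sigma_x$, the disc $W$ is no longer embedded (the self-intersections of $\sigma_x$ become self-intersections of $W$), and the framing of $W$ is altered. You handle disjointness of $\sigma_x$ from $A\cup B$ via duals, but you do not address either of these issues. The paper corrects the framing by boundary twisting (which creates new $W$-to-$A\cup B$ intersections) and then uses the geometric duals of $A$ and $B$ — which are disjoint from $W$ but intersect $V$ — to resolve the self-intersections and the new intersections of $W$ with $A\cup B$ while preserving the framing. These steps can likely be incorporated into your approach, but they are nontrivial and need to be spelled out.

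The serious gap: your verification that the resulting pseudo-isotopy is trivial is not correct. You offer two alternatives, and neither works. First, the claim that the tubing of $W$ over $\sigma_x$ ``can be absorbed into an ambient isotopy of $M_{1/2}$'' is false: tubing changes the homotopy class of $W$ rel.\ boundary (indeed $\sigma_x$ represents a nonzero class in $\pi_2(M_{1/2})$), so no ambient isotopy of $M_{1/2}$ carries the untubed family to the tubed one, and the two Cerf families therefore determine pseudo-isotopies that cannot be assumed equal for free. Second, the appeal to vanishing of Hatcher--Wagoner obstructions is both vague and incorrect: for $\pi_1=\{1\}$ only $\Wh_2$ vanishes; the secondary Hatcher--Wagoner obstruction lives in $\Wh_1(\pi_1;\Z/2\times\pi_2)$, which is not zero in general, and in any case the existence of some deformation from the baseline family does not establish that \emph{your} constructed family yields the identity. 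What is actually needed is a direct argument that the constructed Cerf family determines a trivial pseudo-isotopy. The paper does this by canceling eyes innermost first, and the crux is an asymmetry you have not noticed: by construction $A_i$ meets $B_j$ but $A_j$ is disjoint from $B_i$. The cancellation of the $i$-th eye is supported in a $5$-ball $D^5$ built from a neighborhood of $A_i\cup B_i$; because $A_j$ has no flow lines into $D^5$ while $B_j$ may, the deformation cannot create new $A_j\cap B_j$ intersections, so the $j$-th eye can subsequently be canceled. Without an argument of this kind, the triviality of the constructed pseudo-isotopy is unestablished, and the lemma is not proved.
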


\begin{proof}
    We start by constructing a particular pseudo-isotopy satisfying the conditions of \cref{lemma:trivial-PI-with prescribed-pi2-element}. First consider a trivial pseudo-isotopy with $n$ eyes, where $n$ pairs $A_i, B_i$ are born and then die. No intersections between $A_k$, $B_l$ occur in this family, for any $k,\ell$. For the given indices $i, j$ introduce a finger move between $A_i, B_j$ and immediately reverse it with a Whitney move where the Whitney disc $W$ is a parallel copy of the finger disc $V$. 

    It is not immediately clear that the following modification is a deformation of the trivial pseudo-isotopy. This will be justified at the end of the proof.
    Consider an immersed sphere $S$ representing the element $x\in \pi_2(X)$, and implement an ambient connected sum of $W$ with $S$ along an embedded arc connecting them. We continue denoting by $W$ the resulting disc. Use boundary twisting, cf. \cite{FQ}*{Section 1.3} or \cite{Freedman-notes}*{Section 15.2.2}, to correct the framing of $W$ while introducing additional intersections between $W$ and $A$ or $B$.
    Recall from \cite{Quinn:isotopy}*{Section~4.3}
    that $A$ and $B$ have duals (framed, embedded geometrically dual spheres) that are disjoint from $W$ but intersect $V$. Use these duals to make $W$ embedded and its interior disjoint from $A \cup B$, while preserving the framing of $W$. These $V,W$ determine the desired pseudo-isotopy. Note that it satisfies the requirements of the lemma by construction, except that we still need to justify that this pseudo-isotopy is trivial. 
    
    To show the triviality of the pseudo-isotopy, we cancel the eyes, innermost first, working outwards. There are no obstructions to doing so; indeed, there are no extra intersections between $A_k$ and $B_k$ for any index $k$. 
    Suppose $i<j$, so the $i$-th eye is located in the interior of the $j$-th one. 
    Closing the $A_i$, $B_i$ eye certainly does not create any extra  intersections between $A_k$ and $B_k$ for any $k>i$, $k\neq j$, because $(A_i\cup B_i)\cap (A_k\cup B_k)=\emptyset$. 
    
    Next observe that this does not create any intersections between $A_j$ and $B_j$ either. A crucial ingredient here is the fact that while $A_i$ intersects $B_j$ in the pseudo-isotopy constructed above, $A_j$ is disjoint from $B_i$. 
    Canceling the $i$-th eye corresponds to a deformation of the $1$-parameter family of gradient-like vector fields. Consider the restriction of this deformation to the middle level $t=1/2$.
    Within the Cerf graphic at $t=1/2$, consider the $5$-dimensional cobordism supported in a neighborhood of $A_i\cup B_i$: it is a $5$-ball $D^5$ obtained from a neighborhood of $A_i\cup B_i$ by attaching two $5$-dimensional $3$-handles. 
    A deformation of the gradient-like vector field canceling the $i$-th eye is supported in this $5$-ball. While there are flow lines of the gradient-like vector field connecting $B_j$ and $D^5$, there are no such flow lines for $A_j$. 
    It follows that after the deformation, no new intersection points are created in the middle-middle level between $A_j$ and $B_j$. This shows that the constructed pseudo-isotopy is trivial, concluding the proof of the lemma.
\end{proof}

\begin{proof}[Proof of \cref{thm:gen-barbell-cork-thm}]
As in the proof of \cref{thm:one-eye-thm} in \cref{section:1-eyed-cork-thm}, the starting point is the fact that $f$ is pseudo-isotopic to $\id$ by \cref{thm:TFAE-isotopy-notations}. Using \cref{lemma:n-stable-isotopy-iff-n-eyed-PI}, for the remainder of the proof we will work with a pseudo-isotopy $F$ admitting a Cerf graphic with precisely $n$ eyes.

Apply \cref{lemma:quinn-arc-condition} to arrange that each eye satisfies Quinn's arc condition, and apply \cref{lemma:assume-one-circle-using-sum-square} to arrange that for each $1 \leq i\neq j \leq n$  the boundary arcs of the finger and Whitney discs pairing up intersections between $A_i$ and $B_j$ form a single circle on each of $A_i$ and $B_j$. 

We consider the elements $\theta_{i,j} \in \pi_2(X)$ introduced in \cref{subsection:element-pi2}. 
Given a pair $(i,j)$ with~$i \neq j$ and $\theta_{i,j} \neq 0$, use \cref{lemma:trivial-PI-with prescribed-pi2-element} to construct a trivial pseudo-isotopy with $x:=-\theta_{i,j}\in\pi_2(X)$ and $\theta_{k,\ell}=0$ for any $(k,\ell)\neq (i,j)$, $k\neq \ell$. 
Concatenate it with the given pseudo-isotopy using the ``uniqueness of birth'' move from \cite{HW}*{Chapter V} or \cite{Cerf}*{Chapter III}. As illustrated on the right in \cref{figure:merge}, after this merge there are still two instances of finger and Whitney move times: $t_f$, $t_w$ corresponding to the original pseudo-isotopy, and $t'_f$, $t'_w$ for the constructed one. 
By general position, finger move times~$t'_f$ can be pushed to the left in the Cerf diagram and Whitney move times $t_w$ to the right. The result is a single time when finger moves take place, shortly after the birth of all $A,B$ spheres, and a single Whitney move time shortly before their death. By \cref{lemma:assume-one-circle-using-sum-square}, we can again use the sum square move to arrange that the boundaries of finger and Whitney discs form at most a single circle in $A_i$ and in $B_j$, i.e.\ $C_{ij} \leq 1$. Moreover, by \cref{lemma:pi_2-elements-adding} we have arranged that $\theta_{i,j} =0$.
  
Implementing this step sequentially, we achieve
$\theta_{i,j} =0$ and $C_{ij} \leq 1$ for all pairs $(i,j)$ with $i\neq j$.

\begin{figure}[ht]
\centering
\includegraphics[height=2cm]{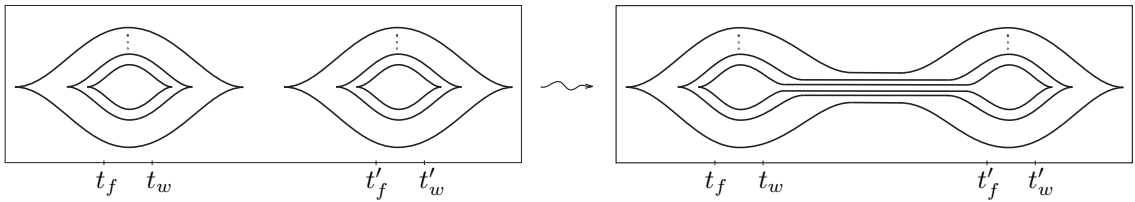}
\scriptsize{
\put(-340,-7){$t_f$}
\put(-323,-7){$t_w$}
\put(-250,-7){$t'_f$}
\put(-235,-7){$t'_w$}
\put(-141,-7){$t_f$}
\put(-126,-7){$t_w$}
\put(-53,-7){$t'_f$}
\put(-37,-7){$t'_w$}
}
\caption{Concatenation of pseudo-isotopies}
\label{figure:merge}
\end{figure}

Recalling that $n$ denotes the number of eyes, the middle-middle level contains the spheres $A_i, B_i$, for $1\leq i\leq n$.  The homotopy type of the Quinn core $Q$ was determined in \cref{lemma:homotopy-type-of-Q}. 

An application of \cref{lemma:additional2handles} shows that $1$- and $2$-handles may be added to $Q$ in $M_{1/2}$ to make the resulting submanifold $R$ simply-connected. 
Moreover, $H_2(R)\cong H_2(Q)$ since each new $2$-handle produced in the proof of \cref{lemma:additional2handles} cancels a corresponding generator of the free fundamental group.

The spine of $R$ is obtained from $\spineQ$ by adding an $S^1$ wedge summand for each new $1$-handle of $R$, and then adding a $2$-cell to each $S^1$ wedge summand of the result. So $R\simeq \bigvee^{N} S^2$. 
The number of $2$-spheres is
$N = 2n+\sum_{i,j} C_{ij}$. Here $2n$ corresponds to the $n$ pairs of $2$-spheres $A_i, B_i$, and $C_{ij}$ is the number of circles formed by the boundaries of the finger and Whitney discs in $A_i$ and $B_j$, equal to $0$ or $1$ for each pair $i,j$ in the present context. Since Quinn's arc condition holds, note that $C_{ii}=0$ for $i=1,\dots,n$. 

By construction, the homotopy classes $\theta_{i,j}$ represented by the $S^2$ summands corresponding to the intersections $A_i\cap B_j$, $i\neq j$ are trivial in $\pi_2(X)$. Therefore the image of $\pi_2(R)$ in $\pi_2(M_{1/2})$ consists of the hyperbolic pairs represented by the spheres~$A, B$.

Consider the $5$-dimensional $h$-cobordism $X\times I\times \{ 1/2\}$. The $4$-manifold $X$ at the top is obtained by attaching $5$-dimensional $3$-handles to the middle-middle level $M_{1/2}$ along the spheres $B$; the copy of $X$ at the bottom is obtained by attaching $5$-dimensional $3$-handles (upside-down $2$-handles) along the spheres $A$. We consider the sub-$h$-cobordism~$Y$ obtained by attaching these $3$-handles to~$R\times [1/2-\varepsilon, 1/2+\varepsilon ]$. 

Denote the resulting $4$-manifolds $Y \cap (X \times \{1\}$ and $Y \cap (X \times \{1\})$ at the top and at the bottom both by $\mathcal{B}$. 
Because the $2$- and $3$-handles of $Y$  geometrically cancel, $Y$ is a product cobordism and the manifolds at either end are diffeomorphic, hence it makes sense to denote both by $\mathcal{B}$. 

The sub-$h$-cobordism $Y$ is obtained by attaching $2n$ 3-handles to $R \times [1/2-\varepsilon, 1/2+\varepsilon ] \simeq \bigvee^N S^2$, with attaching maps homotopic in $\bigvee^N S^2$ to the first $2n$ wedge summands. Hence~$Y$ has homotopy type $Y \simeq \bigvee^{N-2n} S^2$. Thus since $Y$ is an $h$-cobordism, we also have \[\mathcal{B} \simeq \bigvee^{N-2n} S^2 \simeq \bigvee^{\sum_{i,j=1}^n C_{ij}} S^2.\] 
Since $C_{ii}=0$ and $C_{ij} \leq 1$ for $i \neq j$, it follows that $\sum_{i,j} C_{ij} \leq n(n-1)$, so taking $k := \sum_{i,j} C_{ij}$ we have   $\mathcal{B} \simeq \bigvee^{k} S^2$ for some $k \leq n(n-1)$, as desired. 

Since $\mathcal{B}$ is obtained from $R$ by surgering out the spheres $A, B$, the remaining copies of $S^2$ in the wedge sum are those with homotopy classes determined by $\theta_{i,j} =0 \in \pi_2(X)$. 
It follows that the inclusion $\mathcal{B}\to X$ is null-homotopic, as claimed in the theorem.

The conclusion of the proof mirrors that of \cref{thm:corks-for-PIs-body,thm:one-eye-thm} in \cref{section:1-eyed-cork-thm}. In more detail, by \cref{lem:Quinn Core Lemma} our pseudo-isotopy $F$ is isotopic to $F'$ such that $F' = \Id$ on $(X \sm \mathring{\mathcal{B}}) \times I$.
Restricting this isotopy to $X \times \{1\}$ gives an isotopy from $f$ to $f'$ such that~$f' = \id$ on $(X \sm \mathring{\mathcal{B}}) \times \{1\}$. 
\end{proof}

\section{Examples of diffeomorphisms that are 1-stably trivial}\label{section:examples-1-stable}

We give an exposition of examples of exotic diffeomorphisms of simply-connected 4-manifolds that are 1-stably isotopic to the identity. The first examples of exotic diffeomorphisms are due to Ruberman~\cite{ruberman-isotopy}, in 1998. A year later, in \cite{Ruberman-polynomial-invariant}, he produced an infinitely generated  subgroup of $\pi_0 \Diff(Z_n)$, for each $n \geq 2$ where $Z_n := \#^{2n} \CP^2 \#^{10n+1} \ol{\CP}^2$ for $n$ odd and $Z_n := \#^{2n} \CP^2 \#^{10n+2} \ol{\CP}^2$ for $n$ even. Ruberman used Donaldson invariants to prove that his diffeomorphisms are nontrivial. In 2020, using Seiberg-Witten theory Baraglia, and Konno~\cite{Baraglia-Konno}, constructed more examples of exotic diffeomorphisms on closed 4-manifolds. Later, Iida, Konno, Taniguchi, and the second-named author~\cite{iida2022diffeomorphisms} detected exotic diffeomorphisms on 4-manifolds with nonempty boundaries using Kronheimer-Mrowka's invariant.  

The material in this section is known to the experts, however it has not all appeared in writing, and we want to give a self-contained description of the examples to which one can apply \cref{thm:one-eye-thm}. 
We will adapt an argument of Auckly-Kim-Melvin-Ruberman~\cite{AKMR-stable-isotopy}*{Theorem~C} (cf.~\cite{auckly:stable-surfaces}*{p.~6}) to check that the examples are 1-stably smoothly isotopic to the identity, and hence \cref{thm:one-eye-thm} applies nontrivially to all of these exotic diffeomorphisms.

Our exposition of the construction of the diffeomorphisms will be similar to that in Baraglia-Konno~\cite{Baraglia-Konno}, but we need a description of the `reflection' maps in terms of surfaces, in order to prove the existence of 1-stable isotopies. 

\subsection{Construction of diffeomorphisms}

Let $X_0,X_1,\dots$ be a (possibly infinite) family of closed, smooth, simply-connected 4-manifolds. Let $W$ be another closed, smooth, simply-connected 4-manifold.  Suppose that for all $p > 0$ there are orientation preserving diffeomorphisms
\[\varphi_p \colon X_p \# W \xrightarrow{\cong} X_0 \# W.\]
We suppose also that there are smoothly embedded 2-spheres $\xi_+$ and $\xi_-$ in $W$, with normal Euler number either $\pm 1$ or $\pm 2$. We can consider these spheres in $X_p \# W$, and then we denote them as $\xi^p_{\pm}$, for any $p \geq 0$. 

For each $p>0$ we require that
\[[\varphi_p(\xi_{\pm}^p)] = [\xi^0_{\pm}] \in H_2(X_0\# W;\Z).\]
In practice this can usually be arranged using Wall's results in \cite{Wall-diffeomorphisms-4-manifolds}, because the explicit $X_0 \# W$ we use will satisfy the hypotheses of Wall's theorem, and in particular will have an $S^2 \times S^2$ connected summand. 

Given a smoothly embedded $\pm 1$- or $\pm 2$-sphere $\zeta$ in a 4-manifold $M$, there is a diffeomorphism 
\[R^M_\zeta \colon M \to M\]
whose definition we explain now.  The integer $\pm 1$- or $\pm 2$ is the Euler number of the normal bundle of the sphere.  
In all cases, we will define an orientation-preserving diffeomorphism of a closed regular neighborhood $\ol{\nu}\zeta$. The boundary $\partial\ol{\nu}\zeta$ is either $S^3$ or $\RP^3$, for normal Euler number $\pm1$ or $\pm 2$ respectively.

In both cases, the diffeomorphism of $\ol{\nu}\zeta$ will restrict to a diffeomorphism isotopic to the identity on the boundary by Cerf~\cite{Cerf1959-3sphere} and Bonahon~\cite{bonahon} respectively.
Hence, after an isotopy  in a collar neighborhood of the boundary the diffeomorphism 
may be assumed to be the identity in a neighborhood of the boundary of $\ol{\nu}\zeta$. The choice of isotopy to achieve this is not unique, because $\pi_1 \Diff^+(S^3) \cong \Z/2$ and $\pi_1\Diff^+(\RP^3) \cong \Z/2 \times \Z/2$, by ~\cites{HKMR-diffeos-elliptic-3-mflds,bamler2019ricci, smaleconj}.
In the case of a $\pm 1$ sphere, this choice will not affect the isotopy class of the resulting diffeomorphism of $\ol{\nu}\zeta$.  In the case of a $\pm 2$ sphere we will carefully choose an isotopy to uniquely specify a diffeomorphism. 
To define $R^M_\zeta$, we then extend by the identity on $M \sm \nu \zeta$. 

\begin{enumerate}
    \item Suppose the normal  Euler number of $\zeta$ is $\pm 1$. Then $\ol{\nu} \zeta$ is diffeomorphic to a punctured $\CP^2$ or $\ol{\CP}^2$, via a diffeomorphism identifying $\zeta$ with $\CP^1$.  
    We focus on the Euler number $-1$ case. Complex conjugation on all homogeneous coordinates defines a diffeomorphism $\ol{\CP}^2 \to \ol{\CP}^2$, that restricts to the antipodal map on $\CP^1$. Isotope this to fix a ball, remove the ball, and we obtain a diffeomorphism of $\ol{\CP}^2 \sm \mathring{D}^4$. This determines the desired diffeomorphism of $\ol{\nu}\zeta$, uniquely up to isotopy.  A priori the choice of isotopy to fix a ball matters, but in fact this choice of isotopy is irrelevant, because there is a circle action on $\ol{\CP}^2$ that undoes a Dehn twist on the $S^3$ boundary of $\CP^2 \sm \mathring{D}^4$; see~\cite{Giansiracusa-stable-MCG}*{Theorem~2.4}, \cite{AKMR-stable-isotopy}*{Theorem~5.3}.  
    
    \item Suppose the normal Euler number of $\zeta$ is $\pm 2$. Then $\nu \zeta$ is diffeomorphic to $TS^2$ or to $T^*S^2$ respectively. Now we will apply a \emph{symplectic Dehn twist}~\cite{Arnold1995}. The symplectic version is defined on the cotangent bundle $T^*S^2$, but we focus on  $TS^2$,  following the smooth description by Auckly~\cite{auckly:stable-surfaces}.  Let $\alpha \colon S^2 \to S^2$ be the antipodal map, which is degree $-1$. Then $d\alpha \colon TS^2 \to TS^2$ restricts to a diffeomorphism $d\alpha| \colon DTS^2 \to DTS^2$ of the unit disc bundle.  Consider the sphere bundle $STS^2$, which can be identified with $SO(3)$ by considering $STS^2 = \{(u,v) \in \R^3 \times \R^3 \mid \|u\| = \|v\| =1, u \cdot v =0\}$, and sending $(u,v) \in STS^2$ to the orthonormal frame $(u,v,u \times v)$.  We can describe the action of the map $d\alpha|_{STS^2}$ via an action on $SO(3)$, and it acts as multiplication by the $3 \times 3$ diagonal matrix $\operatorname{Diag}(-1,-1,1)$.  Acting by 
    \[\left(\begin{smallmatrix} \cos (\pi(1+t)) & \sin (\pi(1+t)) & 0 \\ -\sin(\pi(1+t)) & \cos(\pi(1+t)) & 0 \\ 0 & 0 & 1 \end{smallmatrix}\right), \]
for $t \in [0,1]$, interpolates between acting by $\operatorname{Diag}(-1,-1,1)$ and by the $3 \times 3$ identity matrix~$I_3$. We insert this isotopy into an interior collar of $DTS^2$, to obtain a diffeomorphism $DTS^2 \to DTS^2$ that acts as the antipodal map on the zero section and is the identity on $STS^2 = \partial DTS^2$. 

A similar construction using the pullback $d^*\alpha \colon T^*S^2 \to T^*S^2$ yields the symplectic Dehn twist when the normal Euler number is $-2$.
\end{enumerate}

This completes our description of the reflection maps $R^M_\zeta \colon M \to M$. The induced maps on second homology are as follows~\cites{ruberman-isotopy,Ruberman-polynomial-invariant,auckly:stable-surfaces}. 
\begin{enumerate}
    \item For $\zeta$ a $\pm 1$ sphere, we have that 
    \begin{align*}
        (R^M_\zeta)_* \colon H_2(M;\Z) & \to H_2(M;\Z) \\
        x &\mapsto x \mp 2(x\cdot \zeta) \zeta.
    \end{align*}
     \item For $\zeta$ a $\pm 2$ sphere, we have that 
    \begin{align*}
        (R^M_\zeta)_* \colon H_2(M;\Z) & \to H_2(M;\Z) \\
        x &\mapsto x \mp (x\cdot \zeta) \zeta.
    \end{align*}
\end{enumerate}
Now we consider these maps for $M = X_p \# W$ and $\zeta = \xi_{\pm}^p$. For each $p \geq 0$ we define
\[\rho^p := R^{X_p\# W}_{\xi_{+}^p} \circ R^{X_p\# W}_{\xi_{-}^p} \colon X_p \# W \to X_p \# W.\]
Note that $\rho^p$ is supported in the $W$ summand of $X_p \# W$. 
For $p>0$ we define 
\[f_p := \varphi_p \circ \rho^p \circ \varphi_p^{-1} \circ (\rho^0)^{-1} \colon X_0 \# W \to X_0 \# W.\]
For suitable choices of $\{X_p\}_{p \geq 0}$, we will show that these provide examples of 1-stably trivial exotic diffeomorphisms, and hence \cref{thm:one-eye-thm} applies nontrivially to them.

\subsection{Topological and 1-stable isotopy}

The following computation will be useful in this section. 
\begin{align}\label{eqn:useful-computation-diffeos}
\begin{split}
    \varphi_p \circ \rho^p \circ \varphi_p^{-1} 
    &= \varphi_p  \circ  R^{X_p\# W}_{\xi_{+}^p} \circ \varphi_p^{-1}  \circ \varphi_p \circ  R^{X_p\# W}_{\xi_{-}^p}  \circ   \varphi_p^{-1} \\ & \sim R^{X_0\# W}_{\varphi_p(\xi_{+}^p)}  \circ  R^{X_0\# W}_{\varphi_p(\xi_{-}^p)} \colon X_0 \# W \to X_0 \# W.
 \end{split}
   \end{align}
This relies on the observation that conjugating a reflection map $R^{X_p\# W}_{\xi_{\pm}^p}$ defined using the tubular neighborhood of an embedded sphere by the diffeomorphism $\varphi_p$, is the same as applying the analogous reflection map to the image of that tubular neighborhood, which by uniqueness of tubular neighborhoods is isotopic to applying the reflection map using any preferred tubular neighborhood.

\begin{lemma}\label{lemma:top-isotopic-Id}
    For each $p > 0$, $f_p \colon X_0 \# W \to X_0 \# W$ acts as the identity on $H_2(X_0\#W;\Z)$, and hence is homotopic,  pseudo-isotopic, topologically isotopic, and smoothly stably isotopic to the identity.   
\end{lemma}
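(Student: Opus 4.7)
The plan is to reduce everything to the single homological computation that $(f_p)_*$ acts as the identity on $H_2(X_0 \# W; \Z)$. Once this is established, the three isotopy conclusions follow from \cref{thm:TFAE-isotopy-notations} together with Kreck's theorem (as cited in its proof), which asserts that a self-diffeomorphism of a closed simply-connected $4$-manifold acting trivially on integral homology is smoothly pseudo-isotopic to the identity. The homotopic-to-identity statement is then automatic, since a smooth pseudo-isotopy to $\Id$ restricts to a homotopy to $\Id$ on the top face.

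To carry out the $H_2$ calculation, I would first apply the isotopy \eqref{eqn:useful-computation-diffeos} to the definition of $f_p$ to obtain
\[
f_p \;\sim\; R^{X_0\# W}_{\varphi_p(\xi_{+}^p)} \circ R^{X_0\# W}_{\varphi_p(\xi_{-}^p)} \circ (\rho^0)^{-1} \colon X_0 \# W \to X_0 \# W.
\]
Since $\sim$ here denotes an isotopy, it suffices to check that the right-hand side acts trivially on $H_2$. The explicit formulas for $(R^M_\zeta)_*$ recorded earlier in the section show that the action of a reflection on second homology depends on $\zeta$ only through the class $[\zeta] \in H_2(M;\Z)$. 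Combining this with the standing hypothesis $[\varphi_p(\xi_\pm^p)] = [\xi_\pm^0]$, the two compositions $\varphi_p \circ \rho^p \circ \varphi_p^{-1}$ and $\rho^0$ induce the same map on $H_2(X_0 \# W;\Z)$, so $(f_p)_* = (\rho^0)_* \circ ((\rho^0)^{-1})_* = \Id$.

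The only real obstacle is the bookkeeping needed to confirm that the relation in \eqref{eqn:useful-computation-diffeos} is genuinely an isotopy (so that it descends to equality on homology), and to check that $f_p$ is orientation-preserving so that Kreck's theorem applies. Neither is difficult: each $R^M_\zeta$ is orientation-preserving by construction, and the isotopy assertion is precisely the consequence of uniqueness of tubular neighborhoods noted immediately after \eqref{eqn:useful-computation-diffeos}. With these two points in hand, the argument is complete.
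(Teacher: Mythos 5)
Your proposal is correct and takes essentially the same route as the paper: the $H_2$ computation via \eqref{eqn:useful-computation-diffeos} and the hypothesis $[\varphi_p(\xi_\pm^p)] = [\xi_\pm^0]$ is identical to what the paper does. The paper cites Cochran--Habegger, Perron--Quinn, and Quinn--Gabai directly for the homotopy/isotopy conclusions rather than routing through \cref{thm:TFAE-isotopy-notations} and Kreck's theorem, but this is only an organizational difference, and your added observations (orientation-preservation, and that a pseudo-isotopy restricts to a homotopy on the top face) are correct.
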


\begin{proof}
    Since $[\varphi_p(\xi_{\pm}^p)] = [\xi^0_{\pm}] \in H_2(X_0\# W;\Z)$, we have that
    \[(R^{X_0\# W}_{\varphi_p(\xi_{\pm}^p)})_* = (R^{X_0\# W}_{\xi_{\pm}^0})_* \colon H_2(X_0\# W;\Z) \to H_2(X_0\# W;\Z),\]
and hence using \eqref{eqn:useful-computation-diffeos} 
\[ (\varphi_p \circ \rho^p \circ \varphi_p^{-1})_* = ( R^{X_0\# W}_{\varphi_p(\xi_{+}^p)})_*  \circ  (R^{X_0\# W}_{\varphi_p(\xi_{-}^p)})_* = (R^{X_0\# W}_{\xi_{+}^0})_* \circ (R^{X_0\# W}_{\xi_{-}^0})_* = \rho^0_*.\]
It follows that $(f_p)_* = \Id_{H_2(X_0\# W;\Z)}$.  
Thus $f_p$ is homotopic to the identity by~\cite{Cochran-Habegger}, topologically isotopic to the identity by Perron-Quinn~\cites{Perron,Quinn:isotopy}, and smoothly stably isotopic to the identity by Quinn-Gabai~\cites{Quinn:isotopy,GGHKP,gabai20223spheres}.
\end{proof}

\begin{definition}
For a closed, orientable 4-manifold $M$, we say that a homology class $\zeta \in H_2(M;\Z)$ is \emph{characteristic} if $\zeta \cdot x \equiv x \cdot x \mod{2}$ for all $x \in H_2(M;\Z)$. If $\zeta$ is not characteristic, then we say that $\zeta$ is \emph{ordinary}. 
\end{definition}

 For any diffeomorphism $g \colon M \to M$, let \[\wh{g} := g \# \Id_{S^2 \times S^2} \colon M \# (S^2 \times S^2) \to M \# (S^2 \times S^2).\] 
This notation will be useful in the proof of the next lemma, which gives hypotheses implying our diffeomorphisms are 1-stably isotopic to $\Id$. The proof is essentially the same as that in~\cite{auckly:stable-surfaces}.

\begin{lemma}\label{lemma:1-stably-isotopy}
Suppose that $[\xi^0_{\pm}] =[\varphi_p(\xi_{\pm}^p)]   \in H_2(X_0\# W;\Z)$ is ordinary, and that $\pi_1(W \sm \xi_{\pm}) = \{1\}$. 
Then for each $p > 0$, $f_p \colon X_0 \# W \to X_0 \# W$ is 1-stably isotopic to the identity. 
\end{lemma}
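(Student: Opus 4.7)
The plan is to combine \eqref{eqn:useful-computation-diffeos} with a smoothing-after-stabilization result for embedded 2-spheres, so that $f_p\#\Id$ becomes isotopic to a composite of four reflections along identical pairs of spheres which then cancel algebraically. Abbreviating $R_\pm^0 := R^{X_0\# W}_{\xi_\pm^0}$ and $R_\pm^p := R^{X_0\# W}_{\varphi_p(\xi_\pm^p)}$, equation \eqref{eqn:useful-computation-diffeos} gives
\[ f_p \sim R_+^p \circ R_-^p \circ (R_-^0)^{-1}\circ (R_+^0)^{-1}, \]
so it suffices to construct smooth isotopies $R_\pm^p\#\Id \sim R_\pm^0\#\Id$ in the stabilized manifold $M := X_0\# W\#(S^2\times S^2)$.

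First I would verify that in $M$ the embedded sphere $\varphi_p(\xi_+^p)$ is smoothly ambient isotopic to $\xi_+^0$, and analogously for the $-$ versions. These two spheres represent the same homology class by hypothesis, have the same normal Euler number since $\varphi_p$ is a diffeomorphism, and have simply-connected complements in $M$: choosing the connect-sum sites disjoint from the spheres, van Kampen combines $\pi_1(W\setminus \xi_\pm) = \{1\}$ with the simple-connectivity of $X_0\setminus D^4$ and $(S^2\times S^2)\setminus D^4$ to yield the claim. The common homology class is ordinary by hypothesis. Under these conditions, two embedded $2$-spheres become smoothly ambient isotopic after a single $S^2\times S^2$ stabilization; this is essentially the content of Auckly-Kim-Melvin-Ruberman~\cite{AKMR-stable-isotopy}*{Theorem~C} and its elaboration in Auckly~\cite{auckly:stable-surfaces}, and the ordinariness hypothesis is precisely what permits the untwisted $S^2\times S^2$ stabilization rather than a twisted $S^2\ttimes S^2$.

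Given a smooth ambient isotopy $h_t\colon M\to M$ with $h_0 = \Id$ and $h_1(\varphi_p(\xi_+^p)) = \xi_+^0$, uniqueness of tubular neighborhoods (together with the choice of boundary isotopy built into the definition of $R$) produces a smooth isotopy from $h_1\circ(R_+^p\#\Id)\circ h_1^{-1}$ to $R_+^0\#\Id$; combined with the conjugation family $t\mapsto h_t\circ(R_+^p\#\Id)\circ h_t^{-1}$, this gives a smooth isotopy $R_+^p\#\Id \sim R_+^0\#\Id$, and analogously for the $-$ versions. Substituting into the formula for $f_p\#\Id$ and using $(g\#\Id)\circ(h\#\Id) = (g\circ h)\#\Id$ then gives
\[ f_p\#\Id \sim \bigl(R_+^0\circ R_-^0\circ (R_-^0)^{-1}\circ (R_+^0)^{-1}\bigr)\#\Id = \Id. \]

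The hard part is the smoothing-after-stabilization assertion for the spheres. A regular homotopy between two homologous spheres with common Euler number always exists, but realizing it as a smooth ambient isotopy generally fails before stabilization --- this failure is exactly why $f_p$ is exotic in the first place. A single $S^2\times S^2$ summand provides just enough room to perform Whitney moves that eliminate self-intersections in the trace of this regular homotopy, and the simply-connected complement is what allows the requisite Whitney discs to be framed and embedded after further modification. This mechanism is the core of the Auckly-Kim-Melvin-Ruberman argument, which the proof must adapt to the spheres $\varphi_p(\xi_\pm^p)$ and $\xi_\pm^0$ in the present setting.
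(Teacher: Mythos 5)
Your proof is correct and takes essentially the same route as the paper: rewrite $f_p\#\Id$ as a composite of four reflections via \eqref{eqn:useful-computation-diffeos}, invoke a stabilized sphere-isotopy theorem to swap $\varphi_p(\xi_\pm^p)$ for $\xi_\pm^0$ after one $S^2\times S^2$ summand, and observe that the reflections then cancel. One small imprecision: the paper cites Auckly--Kim--Melvin--Ruberman--Schwartz~\cite{akmrs:one-is-enough} for the stabilized isotopy of $2$-spheres (homologous, ordinary class, simply-connected complements $\Rightarrow$ smoothly isotopic after one $S^2\times S^2$ stabilization), whereas you attribute it to \cite{AKMR-stable-isotopy}*{Theorem~C}; the latter is the 1-stable triviality of Ruberman's diffeomorphisms themselves, which is a consequence of (not the source of) the sphere result, so the citation should be to~\cite{akmrs:one-is-enough}.
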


\begin{proof}
    By the main result of \cite{akmrs:one-is-enough}, and since $[\xi^0_{\pm}] =[\varphi_p(\xi_{\pm}^p)]$ is ordinary and the complement of these spheres is simply-connected, we have that 
    $\xi^0_{\pm}$ and $\varphi_p(\xi_{\pm}^p)$ are smoothly isotopic in $X_0 \# W \# (S^2 \times S^2)$.  (Special cases appeared earlier in \cite{AKMR-stable-isotopy} and \cite{Akbulut-isotoping-2-spheres}.)
Hence using~\eqref{eqn:useful-computation-diffeos} 
\begin{align*}
    f_p \# \Id_{S^2 \times S^2} &= \wh{f}_p  = \wh{\varphi}_p \circ \wh{\rho^p} \circ \wh{\varphi}_p^{-1} \circ (\wh{\rho^0})^{-1} \\
    &= \wh{R}^{X_0 \# W}_{\varphi_p(\xi^p_+)} \circ \wh{R}^{X_0 \# W}_{\varphi_p(\xi^p_-)} \circ (\wh{R}^{X_0 \# W}_{\xi^0_-})^{-1}  \circ (\wh{R}^{X_0 \# W}_{\xi^0_+})^{-1} \\
    & \sim \wh{R}^{X_0 \# W}_{\xi^0_+} \circ \wh{R}^{X_0 \# W}_{\xi^0_-} \circ (\wh{R}^{X_0 \# W}_{\xi^0_-})^{-1}  \circ (\wh{R}^{X_0 \# W}_{\xi^0_+})^{-1} = \wh{\Id}_{X_0 \# W} \\ &= \Id_{X_0 \# W \# (S^2 \times S^2)} \colon X_0 \# W \# (S^2 \times S^2) \to X_0 \# W \# (S^2 \times S^2).
\end{align*}
\end{proof}

\subsection{Examples}

The first examples of exotic diffeomorphisms of 4-manifolds were given in \cite{ruberman-isotopy}. This paper does not specify particular 4-manifolds, so we consider the examples in \cite{Ruberman-polynomial-invariant} instead. 

\begin{example}[Ruberman]\label{Example:ruberman}
    For some $n \geq 2$, let $X_0 := \#^{2n-1} \CP^2 \#^{10n-1} \ol{\CP}^2$ if $n$ is odd, and let $X_0 := \#^{2n-1} \CP^2 \#^{10n} \ol{\CP}^2$ if $n$ is even. Let $W := \CP^2 \# \ol{\CP}^2 \# \ol{\CP}^2$.  Let $\xi_{\pm} \subseteq W$ be the standard embedded sphere representing $(1, \pm 1,1) \in H_2(W;\Z) \cong \Z^3$, with the basis corresponding to the connected sum decomposition. We have that $\xi_{\pm} \cdot \xi_{\pm} =-1$, and that $\pi_1(W \sm \xi_{\pm}) = \{1\}$. Since $X_0$ is not spin,  it follows that $\xi_{\pm}$ is ordinary.  
    For $p \geq 1$, let $X_{p} := E(n;p+1)$, the result of a multiplicity $p$ log transform on the elliptic surface~$E(n)$.  Then for $Z_n :=  \#^{2n} \CP^2 \#^{10n+1} \ol{\CP}^2$ if $n$ is odd, and $Z_n := \#^{2n} \CP^2 \#^{10n+2} \ol{\CP}^2$ if $n$ is even, we have, as shown by Ruberman~\cite{Ruberman-polynomial-invariant}, an infinitely generated subgroup of $\pi_0 \Diff^+(Z_n)$, generated by $\{f_p\}_{p=1}^{\infty}$.  All of these diffeomorphisms are topologically and 1-stably isotopic to $\Id_{Z_n}$ by \cref{lemma:top-isotopic-Id,lemma:1-stably-isotopy}; note that this was already known and due to \cite{AKMR-stable-isotopy}.  Hence by \cref{thm:one-eye-thm} they each admit a diff-cork. The same strategy works for the examples of exotic diffeomorphisms on compact 4-manifolds with nonempty boundaries from~\cite{iida2022diffeomorphisms}. 
\end{example}


\begin{example}[Baraglia-Konno]\label{Example:BK}
We present the Baraglia-Konno examples of exotic diffeomorphisms~\cite{Baraglia-Konno}.  For some $n \geq 2$, let $X_0 := \#^{n-1} (S^2 \times S^2) \#^n E(2)$. Let $X_1 := E(2n)$. 
Let $W:= S^2 \times S^2$, and let $\xi_{\pm} \subseteq W$ be an embedding representing $(1,\pm 1)$ in $H_2(S^2 \times S^2;\Z) \cong \Z^2$ with the standard basis. Then $\xi_{\pm} \cdot \xi_{\pm} = \pm 2$. We can choose $\xi_{\pm}$ such that $S^2 \times \{\pt\}$, which is embedded with trivial normal bundle, intersects $\xi_{\pm}$ exactly once, and hence $\xi_{\pm}$ is ordinary and has $\pi_1(W \sm \xi_{\pm})=\{1\}$. With this data, $f_1 \colon \#^{n} (S^2 \times S^2) \#^n E(2) \to \#^{n} (S^2 \times S^2) \#^n E(2)$ is a diffeomorphism that is topologically and 1-stably isotopic to the identity by \cref{lemma:top-isotopic-Id,lemma:1-stably-isotopy}. Baraglia-Konno proved that $f_1$ is not smoothly isotopic to the identity. By \cref{thm:one-eye-thm} they each admit a diff-cork. 
\end{example}

\begin{example}[Auckly]\label{Example:Auckly}
Further examples  were given by Auckly~\cite{auckly:stable-surfaces}, also making use of \cite{Baraglia-Konno}*{Theorem~4.1}, 
using $X_0 = E(2)$ and $X_p := E(2;2p+1)$ for $p \geq 1$. This yields a family of exotic and 1-stably isotopic diffeomorphisms, similarly to  \cref{Example:ruberman}, that generate an infinite rank subgroup in the abelianization of the mapping class group. By \cref{thm:one-eye-thm} they each admit a diff-cork. 
\end{example}

We note that our description of the $R_{\zeta}$ diffeomorphisms differs from that in \cite{Baraglia-Konno}. However this does not affect our ability to apply the results of \cite{Baraglia-Konno}.  
The key property of the $R_{\zeta}$ diffeomorphisms is their action on homology, which is such that the Stiefel-Whitney class $w_1(H^+)$ appearing in the  Baraglia-Konno gluing formula is nonzero~\cite{Baraglia-Konno}*{Theorem~4.1}. This nonvanishing in the Baraglia-Konno formula enables one to  express the Family Seiberg-Witten invariants of the diffeomorphisms $f_p$ in terms of the Seiberg-Witten invariants of the manifolds $\{X_p\}_{p \geq 0}$. Since the manifolds $\{X_p\}_{p \geq 0}$ have pairwise distinct $\Z/2$-Seiberg-Witten invariants, we can deduce that the diffeomorphisms in question are pairwise not smoothly isotopic.  However \cref{thm:one-eye-thm} applies to them all.

\Zmdiffcork*

\begin{proof}
Consider $f_1,\dots,f_m \colon Z_n \to Z_n$, the first $m$ of Ruberman's family of diffeomorphisms from \cref{Example:ruberman}. Let $\operatorname{TDiff}(Z_n) \subseteq \Diff^+(Z_n)$ be the Torelli subgroup of diffeomorphisms acting trivially on integral homology. Ruberman defined a group homomorphism $\mathbb{D} \colon \pi_0\operatorname{TDiff}(Z_n) \to \R[\![H_2(Z_n)^*]\!]$ valued in a power series ring, and showed that $\{\mathbb{D}(f_p)\}_{p=1}^m$ is a linearly independent set in $\R[\![H_2(Z_n)^*]\!]$. 

By~\cref{thm:one-eye-thm-full-version}, there is a compact, contractible, smooth, codimension zero submanifold $\mathcal{C}_m$ such that each $f_i$, for $i=1,\dots,m$, can be isotoped to a diffeomorphism $g_i$ that is  supported on $\mathcal{C}_m$. 
We consider~$g_i$ as a diffeomorphism of~$\mathcal{C}_m$.  Now consider the composition \[\pi_0 \Diff_{\partial}(\mathcal{C}_m) \xrightarrow{\iota} \pi_0\operatorname{TDiff}(Z_n) \xrightarrow{\mathbb{D}} \R[\![H_2(Z_n)^*]\!].\]
Since $\iota(g_i) \sim f_i$, the set $\{\mathbb{D} \circ \iota(g_i)\}_{i=1}^m$ is linearly independent. 
It follows that $\{g_i\}_{i=1}^m$ generates a subgroup of  $\pi_0 \Diff_{\partial}(\mathcal{C}_m)$ that abelianizes to $\Z^m$. 
\end{proof}

\begin{remark}
    One can also use the diffeomorphisms from \cref{Example:Auckly} to prove \cref{thm:Zm-in-a-diff-cork}, by combining with \cite{konno-lin2023hom-instab}*{Proposition~5.4}. 
\end{remark}

\section{Monopole Floer homology, family Seiberg-Witten invariants, and applications}\label{section:FSW-applications}

In this section we recall the monopole Floer cobordism maps, the family Seiberg-Witten invariant, and a gluing formula of Lin~\cite{lin2022family}. 
We then prove \cref{thm:inclusion-not-he}, and we show that the family cobordism maps associated with the diff-corks arising from  \cref{Example:ruberman,Example:BK,Example:Auckly} are nontrivial.  

\subsection{Monopole Floer cobordism maps}\label{section:monopole-floer}

Our exposition here follows that of Jianfeng Lin in \cite{lin2022family}.

First, recall that given a closed, oriented 3-manifold $Y$ with a $\spin^c$ structure, Kronheimer and Mrowka~\cite{monopole-KM} defined abelian groups called the \emph{monopole Floer (co)homology} of $(Y,\mathfrak{s})$. These groups come in various flavors. 
We will use the homology $\widehat{HM}_*(Y,\mathfrak{s})$ and $\HMt_*(Y,\mathfrak{s})$ and the corresponding cohomology groups $\widehat{HM}^*(Y,\mathfrak{s})$ and  
$\HMt^*(Y,\mathfrak{s})$. 
Following Lin, since we need to use his gluing formula, we will work over $\Q$. So these groups are $\Q$-vector spaces. 

From now on assume that $Y$ is an integral homology 3-sphere. Then these groups are $\Z$-graded $\Q$-vector spaces \cite{monopole-KM}*{28.3.3}, 
and moreover there is a graded module structure over the polynomial ring $\Q[U]$. For homology the action of $U$ has degree $-2$, while for cohomology the action of $U$ has degree~$2$.  

An integral homology sphere $Y$ admits a unique $\spin^c$ structure, and so in such cases we will omit $\mathfrak{s}$ from the notation. With this in mind, we have an isomorphism of graded $\Q[U]$-modules
$\widehat{HM}_*(S^3) \cong \Q[U]\langle -1 \rangle$, where $\langle -1 \rangle$ denotes a grading shift, and implies that the constants live in degree~$-1$. Since $U$ has degree $-2$, it follows that $\widehat{HM}_*(S^3)$ consists of a copy of $\Q$ in each odd negative degree. We let $\widehat{1}$ denote the canonical generator in degree~$-1$.  
%
On the other hand, we have an isomorphism of graded $\Q[U]$-modules 
$\HMt_*(S^3) \cong \Q[U,U^{-1}]/ U\cdot\Q[U]$,
so there is a copy of $\Q$ in each nonnegative even degree.

The cohomology $\HMt^*(S^3)$ is isomorphic to $\HMt_*(S^3)$ as a graded vector space, but now the action of $U$ has degree $2$, and so instead we have an isomorphism of graded $\Q[U]$-modules $\HMt^*(S^3) \cong \Q[U]$. We let $\check{1}$ denote the canonical generator of $\HMt^*(S^3)$, which lies in degree $0$.
Similarly, the cohomology $\widehat{HM}^*(S^3)$ is isomorphic to $\Q[U,U^{-1}]\langle -1 \rangle/U \cdot \Q[U]$, with a copy of $\Q$ in each negative odd degree.

\begin{remark}\label{remark:degree-nonnegative-implies-zero}
  Every graded homomorphism $\HMt^*(S^3) \to \widehat{HM}^*(S^3)$ of nonnegative degree is trivial, because every nonzero element of $\HMt^*(S^3)$ lies in positive grading, hence the image in $\wh{HM}^*(S^3)$ is positively graded. But $\wh{HM}^*(S^3)$ only has nontrivial groups in negative degrees. 
\end{remark}

Now let $M$ be a smooth, closed, oriented 4-manifold with $b_2^+(M) > 2$, and let $g \in \Diff(M)$ be a diffeomorphism that induces the identity map on homology $H_*(M)$, and that fixes a 4-ball $\Delta \subseteq M$ pointwise.   
Consider the $M$-bundle $p \colon \widetilde{M} \to S^1$ obtained by taking the mapping torus of $g$. 
Assume that the bundle $\wt{M}$ admits a decomposition \[\wt{M} = \wt{M}_0 \cup_{Y \times S^1} M_1 \times S^1\]
into an $M_0$-bundle over $S^1$ and a trivial $M_1$-bundle over $S^1$, where $\partial M_0 = Y$ is an oriented integral homology 3-sphere, $b_2^+(M_1) > 1$, and $\partial M_1 =-Y$.  Assume additionally that $\Delta \subseteq M_0$, so $\wt{M}_0$ has a trivial sub-bundle $\Delta \times S^1$.  Removing $\mathring{\Delta} \times S^1$ from  $\wt{M}_0$ yields a bundle of cobordisms~$\wt{W}_0$ from $S^3$ to $Y$, over~$S^1$. Removing a 4-ball from $M_1$ yields a cobordism $W_1$ from $Y$ to $S^3$. 

We consider a family $\spin^c$ structure $\wt{\mathfrak{s}}$ on $\wt{M} \to S^1$, and restrict it to a family $\spin^c$ structure $\wt{\mathfrak{s}}_0$ on $\wt{W}_0$ and to a $\spin^c$ structure on $\mathfrak{s}_1$ on $W_1$.  
 There is an induced family cobordism map
\[\widehat{HM}_*(\wt{W}_0,\widetilde{\mathfrak{s}}_0) \colon \widehat{HM}_*(S^3)\to \widehat{HM}_*(Y)\]
on monopole Floer homology \cite{lin2022family}*{Proposition 4.5}, and there is an induced cobordism map 
\[\vv{HM}^*(W_1,\mathfrak{s}_1) \colon \HMt^*(S^3)\to \widehat{HM}^*(Y)\]
on monopole Floer cohomology \cite{monopole-KM}*{Section~3.5}.  
For the proof of \cref{thm:inclusion-not-he}, we need to investigate the effect of the latter map on gradings.

\begin{remark}\label{remark:dependence-on-choice-of-isotopy}
Note that given a diffeomorphism of $g' \colon M \to M$, there is a choice of an isotopy of $g'$  to a diffeomorphism $g$ fixing a 4-ball $\Delta \subseteq M$. The family cobordism map $\widehat{HM}_*(\wt{W}_0,\widetilde{\mathfrak{s}}_0)$ \emph{a priori} depends on these choices. While the analysis of this dependence is outside the scope of this paper, the results below hold for any choice. 
\end{remark}

\begin{lemma}\label{lemma:degree-of-map}
   The map $\vv{HM}^*(W_1,\mathfrak{s}_1)$ is a graded homomorphism  of degree $-d(W_1,\mathfrak{s}_1)$, where 
    \[d(W_1,\mathfrak{s}_1) := \frac{c_1(\mathfrak{s})^2[W_1] -2\chi(W_1) -3\sigma(W_1)}{4}.\]
\end{lemma}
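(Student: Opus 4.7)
The plan is to invoke the standard degree shift formula for monopole Floer cobordism maps, which is established in Kronheimer--Mrowka \cite{monopole-KM}. Since the cobordism $W_1$ (with a $\spin^c$ structure $\mathfrak{s}_1$) runs from $Y$ to $S^3$ after puncturing, the induced homomorphism on monopole Floer cohomology is a graded map whose degree shift is governed by the expected dimension of the Seiberg--Witten moduli space on the cobordism.

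First, I would recall that the cobordism map $\vv{HM}^*(W_1,\mathfrak{s}_1)$ is defined by counting (in the appropriate sense) solutions to the $4$-dimensional Seiberg--Witten equations on $W_1$ with asymptotic boundary conditions matching monopoles on the ends, where $\mathfrak{s}$ extends $\mathfrak{s}_1$ over the capped-off version of $W_1$. The degree of this map, in the $\mathbb Z$-graded setting provided by integral homology sphere ends, is exactly the negative of the expected $($formal$)$ dimension of the moduli space, since each rigid solution contributes in degree equal to the grading of the generator it pairs with the generator at the other end.

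Next, I would quote the formal dimension formula: for a $\spin^c$ $4$-manifold cobordism $(W_1,\mathfrak{s}_1)$ between integral homology spheres, the expected dimension of the moduli space is $d(W_1,\mathfrak{s}_1) = \tfrac{1}{4}\bigl( c_1(\mathfrak{s})^2[W_1] - 2\chi(W_1) - 3\sigma(W_1) \bigr)$. This is precisely \cite{monopole-KM}*{Proposition~28.2.2} $($or the formula $(28.3)$ in the grading discussion of Section~28.3$)$, suitably specialized to the case of homology sphere boundaries so that no correction terms from boundary gradings enter. Combining this with the sign convention that the induced map lowers $($or in cohomology, shifts$)$ gradings by $d(W_1,\mathfrak{s}_1)$ yields the claimed degree of $\vv{HM}^*(W_1,\mathfrak{s}_1)$.

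There is no real obstacle here beyond carefully matching conventions: the main check is to confirm the sign of the degree shift in cohomology versus homology, and to confirm that with our conventions $($in which $U$ acts with degree $+2$ on cohomology and $\check{1} \in \HMt^*(S^3)$ sits in degree $0)$, the map indeed decreases grading by $d(W_1,\mathfrak{s}_1)$. This is a bookkeeping exercise, and the proposal is essentially to cite \cite{monopole-KM}*{Chapter~28} for the full justification.
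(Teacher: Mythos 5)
Your overall strategy — reduce to the grading shift formula in Kronheimer--Mrowka and specialize to homology sphere ends — is the same as the paper's, and you identify the right citation, namely Equation~(28.3) of \cite{monopole-KM}. However, there is a wrinkle you elide that the paper handles explicitly: the map in question, $\vv{HM}^*(W_1,\mathfrak{s}_1)$, is the ``from--to'' flavor of cobordism map, whereas the degree formula (28.3) is stated for the three standard flavors $\HMt$, $\widehat{HM}$, $\overline{HM}$. To extract the degree of $\vv{HM}^*$ from that formula one needs to relate it to one of the standard flavors. The paper does this by first passing from cohomology to homology (which just negates the degree), and then invoking the factorization $\vv{HM}_*(W_1,\mathfrak{s}_1) \circ j_* = \HMt_*(W_1,\mathfrak{s}_1)$ from \cite{monopole-KM}*{Theorem~3.5.3}, where $j_* \colon \HMt_*(Y) \to \widehat{HM}_*(Y)$ has degree $0$. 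This is the step that lets one apply (28.3) directly. Your proposal jumps from the formula for the standard flavors to the conclusion about $\vv{HM}^*$ without this bridge, filing it under ``bookkeeping''; it is not wrong, but an explicit appeal to $j_*$ (or the dual factorization through $p_*$) is what actually closes the argument.

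One further remark: the formula (28.3) expresses the degree via $\smfrac{1}{4}c_1(\mathfrak{s}_1)^2[W_1] - \iota(W_1) - \smfrac{1}{4}\sigma(W_1)$ with the quantity $\iota(W_1)$ from \cite{monopole-KM}*{Definition~25.4.1}, not directly in the form $d(W_1,\mathfrak{s}_1)$ used in the lemma; a short computation (using that both ends are rational homology spheres, so $\beta_1$ terms drop out) is needed to convert between the two expressions. Your proposal asserts this specialization happens ``so that no correction terms from boundary gradings enter,'' which is the right intuition, but it would be worth writing out the algebra to verify the match with the statement's normalization of $d(W_1,\mathfrak{s}_1)$.
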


Here $c_1(\mathfrak{s}_1)$ is the first Chern class, $\chi(W_1)$ is the Euler characteristic, and $\sigma(W_1)$ is the signature of the intersection pairing.

\begin{proof}
This fact is contained in \cite{monopole-KM}; for non-experts we explain how to extract it.  
The degree of the map on cohomology is the negative of the map on homology, so it suffices to see that $\vv{HM}_*(W_1,\mathfrak{s}_1) \colon \widehat{HM}_*(Y) \to \HMt_*(S^3)$ has degree $d(W_1,\mathfrak{s}_1)$.   By \cite{monopole-KM}*{Theorem~3.5.3}, there is a map $j_* \colon \HMt_*(Y) \to \widehat{HM}_*(Y)$ of degree 0~\cite{monopole-KM}*{p.~52} such that 
\[\vv{HM}_*(W_1,\mathfrak{s}_1) \circ j_* = \HMt_*(W_1,\mathfrak{s}_1) \colon \HMt_*(Y) \to \HMt_*(S^3).\]
So it suffices to see that $\HMt_*(W_1,\mathfrak{s}_1)$ has degree $d(W_1,\mathfrak{s}_1)$.  

By \cite{monopole-KM}*{Equation~(28.3), p.~588}, the degree of $\HMt_*(W_1,\mathfrak{s}_1)$  is $d:= \smfrac{1}{4}c_1(\mathfrak{s}_1)^2[W_1] - \iota(W_1) - \smfrac{1}{4}\sigma(W_1)$, where $\iota(W_1) := \smfrac{1}{2}(\chi(W_1) + \sigma(W_1) + \beta_1(S^3) - \beta_1(Y))$ (see \cite{monopole-KM}*{Definition~25.4.1}).  A straightforward calculation shows that $d = d(W_1,\mathfrak{s})$, as required.  
\end{proof}

\begin{corollary}\label{cor:vanishing-cobordism-map}
    If $Y=S^3$ and $d(W_1,\mathfrak{s}_1) \leq 0$, then $\vv{HM}^*(W_1,\mathfrak{s}_1) \colon \HMt^*(S^3) \to \HMf^*(S^3)$ is the zero map.
\end{corollary}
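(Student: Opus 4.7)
The plan is to observe that this corollary is an immediate syllogism combining \cref{lemma:degree-of-map} with \cref{remark:degree-nonnegative-implies-zero}, and the proof requires no substantial new input. By \cref{lemma:degree-of-map}, the map $\vv{HM}^*(W_1,\mathfrak{s}_1)$ is a graded homomorphism of degree $-d(W_1,\mathfrak{s}_1)$. The hypothesis $d(W_1,\mathfrak{s}_1) \leq 0$ then gives $-d(W_1,\mathfrak{s}_1) \geq 0$, so the map has nonnegative degree.

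Since the additional hypothesis $Y = S^3$ places us in the setting of \cref{remark:degree-nonnegative-implies-zero}, namely a graded homomorphism $\HMt^*(S^3) \to \widehat{HM}^*(S^3)$, the remark immediately yields vanishing: every nonzero element of $\HMt^*(S^3)$ lies in nonnegative degree, and is therefore sent to an element of $\widehat{HM}^*(S^3)$ in nonnegative degree; but the latter group is supported entirely in negative odd degrees. There is no real obstacle here; the only bookkeeping item is to confirm the sign convention — namely that a map of homology of degree $d$ induces a map on cohomology of degree $-d$ — so that \cref{lemma:degree-of-map} (phrased for cohomology) indeed produces a nonnegative-degree map under our assumption. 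Having verified this, the corollary follows.
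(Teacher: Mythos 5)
Your proof is correct and follows exactly the paper's own argument: apply \cref{lemma:degree-of-map} to get a graded map of nonnegative degree, then invoke \cref{remark:degree-nonnegative-implies-zero} to conclude vanishing. (As a small aside, your phrasing that nonzero elements of $\HMt^*(S^3)$ lie in \emph{nonnegative} degree is actually more precise than the remark's ``positive grading,'' since the generator $\check{1}$ sits in degree $0$; the conclusion is unaffected either way since $\widehat{HM}^*(S^3)$ is supported in negative odd degrees.)
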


\begin{proof}
    If $d(W_1,\mathfrak{s}_1) \leq 0$ then $-d(W_1,\mathfrak{s}_1) \geq 0$, so by \cref{lemma:degree-of-map} the degree of $\vv{HM}^*(W_1,\mathfrak{s}_1)$ is nonnegative. The corollary then follows from \cref{remark:degree-nonnegative-implies-zero}. 
\end{proof}

\subsection{The family Seiberg-Witten invariant}\label{section:family-SW-intro}

We continue with the notation and assumptions from the previous subsection.
In addition, assume that the family expected dimension of $(\wt{M},\wt{\mathfrak{s}})$ vanishes, that is:
\begin{equation}\label{eqn:expected-dimension}
    d(\wt{M},\wt{\mathfrak{s}}) := \frac{c_1(\wt{\mathfrak{s}}|_M)^2[M] -2\chi(M) -3 \sigma(M)}{4} + 1 = 0. 
\end{equation} 
In this context, in particular given $M$ with $b_2^+(M)>2$, a diffeomorphism $g \colon M \to M$ that acts trivially on homology, and a family $\spin^c$ structure $\wt{\mathfrak{s}}$ with $d(\wt{M},\wt{\mathfrak{s}})=0$, one can define the \emph{family Seiberg-Witten invariant}, \cites{ruberman-isotopy,Li-Liu}, \cite{lin2022family}*{Section~2}, 
\[\FSW(g,\wt{\mathfrak{s}}) \in \Z. \]
If $g$ is smoothly isotopic to the identity in $\operatorname{Diff}(M)$, then $\FSW(g,\widetilde{\mathfrak{s}})= 0$ for any family $\spin^c$ structure $\widetilde{\mathfrak{s}}$ with $d(\widetilde{M},\widetilde{\mathfrak{s}})=0$ by~\cite{ruberman-isotopy}*{Lemma~2.7}. 
(If $d(\widetilde{M},\widetilde{\mathfrak{s}}) \neq 0$, then this also holds, by definition.) 
Our applications of family Seiberg-Witten theory are based on the following gluing formula of Jianfeng Lin. Let  $\langle -, - \rangle \colon \widehat{HM}_*(Y)  \times \widehat{HM}^*(Y) \to \Q$ denote the Kronecker pairing.  After an isotopy we assume that $g$ fixes a 4-ball $\Delta$, giving rise to a family cobordism map $\widehat{HM}_*(\wt{W}_0, \widetilde{\mathfrak{s}}_0)$; see \cref{remark:dependence-on-choice-of-isotopy}. 

\begin{theorem}[{\cite{lin2022family}*{Theorem~L}}]\label{thm:lin-gluing}
Considering $\FSW(g,\wt{\mathfrak{s}})$ as a rational number, we have 
    \[\FSW(g,\wt{\mathfrak{s}}) = \langle 
    \widehat{HM}_*(\wt{W}_0, \widetilde{\mathfrak{s}}_0)(\widehat{1}), \vv{HM}^*(W_1,\mathfrak{s}_1)(\check{1})   \rangle \in \Q.\]
\end{theorem}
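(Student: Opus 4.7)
The plan is to establish the gluing formula by a neck-stretching argument, which is the standard TQFT-style technique for Seiberg--Witten invariants, lifted to the family setting along the base $S^1$. Endow the bundle $\wt{M} \to S^1$ with a family metric that is a Riemannian product on a long tubular neighborhood $Y \times [-T,T] \times S^1$ of the separating submanifold $Y\times S^1$, along with generic family perturbations of the SW equations. By definition, $\FSW(g,\wt{\mathfrak{s}})$ is the signed count of solutions of the family SW equations in the zero-dimensional cut-down moduli space; for any fixed $T$ this count is invariant, so it suffices to analyze the limit as $T \to \infty$.

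Next, I would run a family version of the neck-stretching compactness theorem of Kronheimer--Mrowka. As $T \to \infty$, a sequence of family solutions extracts a broken trajectory: a family configuration on $\wt{M}_0$, a (possibly broken) Floer trajectory on $\R \times Y$, and an ordinary SW configuration on $M_1 \times S^1$. Because the base bundle restricts trivially to $M_1 \times S^1$, the last piece is parameter-independent, so the neck-limit data on the $M_1$ side is encoded by the restriction of a single $W_1$-configuration to $Y$; on the $\wt{W}_0$-side, the neck-limit traces out an element of $\wh{HM}_*(Y)$ by the family version of the usual definition of the cobordism map (\cite{lin2022family}*{Proposition~4.5}).

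To identify the two sides of the pairing, use the trivialized 4-ball subbundle $\Delta \times S^1 \subseteq \wt{M}_0$. Removing it produces the family cobordism $\wt{W}_0$ from $S^3$ to $Y$. The canonical reducible solution on $D^4$ defines the generator $\wh{1}\in \wh{HM}_*(S^3)$, and the neck-limit on the $\wt{W}_0$-side becomes $\wh{HM}_*(\wt{W}_0,\wt{\mathfrak{s}}_0)(\wh{1}) \in \wh{HM}_*(Y)$. Dually, the $M_1$-side contributes $\vv{HM}^*(W_1,\mathfrak{s}_1)(\check{1}) \in \wh{HM}^*(Y)$, where $\check{1}\in\HMt^*(S^3)$ is the canonical degree-zero class used to seal off the removed 4-ball in $M_1$. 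The constraint that the limiting configurations at $\{+T\} \times Y\times S^1$ in $\wt{W}_0$ agree with those at $\{-T\} \times Y$ in $W_1$ becomes exactly the Kronecker pairing between the homology and cohomology elements, yielding the stated formula.

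The main obstacle is the treatment of reducibles in the family setting. Because the base $S^1$ has $b_1=1$, as the parameter traverses $S^1$ one may cross walls in the space of perturbations where reducible solutions appear; the correct bookkeeping, which is exactly the point of using the \emph{to}-version $\wh{HM}_*$ on the $\wt{W}_0$-side and the \emph{from}-version $\HMt^*$ on the $W_1$-side, requires a careful model of the blown-up family configuration space and the proof that the canonical classes $\wh 1,\check 1$ represent the contribution of the trivial $4$-ball bundle on each side. Secondary technical points include verifying family transversality for the cut-down moduli space, the need for $b_2^+(M_1)>1$ to rule out bubbling on the non-family side, and showing the construction is insensitive to the choice of isotopy of $g$ to a diffeomorphism fixing $\Delta$ (cf.\ \cref{remark:dependence-on-choice-of-isotopy}).
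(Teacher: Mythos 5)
This statement is not proved in the paper at all: it is quoted as Theorem~L of Lin's work \cite{lin2022family}, and the authors simply invoke it as an external black box in \cref{subsection:proof-inclusion-not-weak} and \cref{subsection:nontrivial-FSW}. So there is no internal proof to compare against. That said, your sketch is a reasonable high-level account of the neck-stretching strategy that Lin's argument (like the non-family Kronheimer--Mrowka gluing theorems it extends) is built on: degenerate the metric along $Y\times S^1$, run a family compactness theorem to extract broken trajectories, use the triviality of the bundle over $M_1$ to make that side parameter-independent, interpret the two halves as the cobordism maps $\widehat{HM}_*(\wt{W}_0,\wt{\mathfrak{s}}_0)$ and $\vv{HM}^*(W_1,\mathfrak{s}_1)$, and read off the Kronecker pairing from the matching condition across the neck. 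You have also correctly flagged the genuinely delicate points --- the treatment of reducibles in the blown-up family configuration space (which dictates why the $\widehat{HM}$/$\HMt$ flavors appear on the two sides), transversality for the cut-down family moduli space, the role of $b_2^+(M_1)>1$, and independence of the isotopy of $g$ to a disc-fixing representative. If you intended to \emph{reprove} Lin's theorem, the sketch would need to be filled out with the actual analytical content (gluing/ungluing, orientations, signs, and the precise blow-up model), but as an explanation of why the formula holds and where the difficulty lies, it is accurate. In the context of this paper, though, the correct move is simply to cite Lin, as the authors do.
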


\subsection{Proof of \texorpdfstring{\cref{thm:inclusion-not-he}}{Theorem 1.7}}\label{subsection:proof-inclusion-not-weak}

  Consider a compact, contractible $n$-manifold $C$.  Fix an embedding $D^n \hookrightarrow \mathring{C}$, and let $E_n \colon \Diff_\partial(D^n)\to \Diff_\partial(C)$ be the map given by extending diffeomorphisms of $D^n$ by the identity over $C \sm D^n$. 
   Galatius and Randal-Williams  \cite{galatius2023alexander}*{Theorem B} showed that $E_n$ is a weak equivalence for $n \geq 6$. 
 Our next result, whose statement we recall for the convenience of the reader, shows that $E_4$ is not a weak equivalence for suitable choices of $C$. 

\inclusionnotweak*

   \begin{proof}
   Let $f \colon X \to X$ be a diffeomorphism of a closed, simply connected 4-manifold that is 1-stably isotopic to $\Id_X$, together with a family $\spin^c$ structure $\widetilde{\mathfrak{s}}$ with $d(\widetilde{X},\widetilde{\mathfrak{s}})=0$, where $\wt{X}$ is the mapping torus of $f$, and $\FSW(f,\wt{\mathfrak{s}}) \neq 0$.  Baraglia-Konno~\cite{Baraglia-Konno} and Auckly~\cite{auckly:stable-surfaces} proved that all of the examples from \cref{Example:ruberman,Example:BK,Example:Auckly} satisfy these conditions. 

   By \cref{thm:one-eye-thm}, $f$ is isotopic to a diffeomorphism supported on a contractible codimension zero submanifold $C$. Let $h \colon C \to C$ be the restriction.  Suppose for a contradiction that there is an embedding $D^4 \subseteq C$ such that $h$ is isotopic to a diffeomorphism supported on $D^4$. Then $f$ is isotopic to a diffeomorphism  $f' \colon X \to X$ such that $f'$ is supported on $D^4$. Let $g := f'| \colon D^4 \to D^4$.   
 
Now, consider the decomposition  $X = D^4\cup_{S^3} X'$, where $X' := X \sm \mathring{D}^4$.  Let $W_0$ denote $D^4$ with a further $\mathring{D}^4$ removed from the interior, the closure of which we may assume is fixed by $g$. 
Let $W_1$ denote $X \sm (\mathring{D}^4 \sqcup \mathring{D}^4)$, namely $X'$ with a further puncture.  
Let $\wt{\mathfrak{s}}_0$ be the restriction of $\wt{\mathfrak{s}}$ to $\wt{W}_0$ and let $\mathfrak{s}_1$ be the restriction to $W_1$. 
Then the gluing formula for the family Seiberg-Witten invariant (\cref{thm:lin-gluing}) implies that 
 \[\FSW(f',\wt{\mathfrak{s}}) = \langle 
    \widehat{HM}_*(\wt{W}_0, \widetilde{\mathfrak{s}}_0)(\widehat{1}), \vv{HM}^*(W_1,\mathfrak{s}_1)(\check{1})   \rangle \in \Q.\]
Let  $\mathfrak{s}$ be the restriction of $\wt{\mathfrak{s}}$ to $X$, and note that in our case $b_2^+(X) > 2$ , so in particular $b_2^+(W_1) > 1$.   
Since $d(\widetilde{X},\widetilde{\mathfrak{s}})=0$, it follows from 
\eqref{eqn:expected-dimension} that the ordinary expected dimension
\[d(X,\mathfrak{s}) = \frac{c_1(\mathfrak{s})^2[X] -2\chi(X) -3 \sigma(X)}{4} = -1. \]
Replacing $X$ with $W_1$, we have that $c_1(\mathfrak{s})^2[X] = c_1(\mathfrak{s_1})^2[W_1]$ and $\sigma(X) = \sigma(W_1)$, but $\chi(W_1) = \chi(X) -2$. Hence $d(W_1,\mathfrak{s}_1) = d(X,\mathfrak{s})+1 =0$.   
By \cref{cor:vanishing-cobordism-map}, $\vv{HM}^*(W_1,\mathfrak{s}_1)$ is the zero map.
Hence by the gluing formula, $FSW(f', \widetilde{\mathfrak{s}})= 0$. Isotopy invariance of $\FSW$ and the fact that $\FSW(f,\wt{\mathfrak{s}}) \neq 0$ yields the desired contradiction. 
\end{proof}

\begin{remark}
It is a fact known to experts in gauge theory that an exotic diffeomorphism detected by 1-parameter family Seiberg-Witten invariants cannot be isotopic to one supported in a 4-ball.
    We thank David Auckly and Danny Ruberman for mentioning this fact to us, and particularly Hokuto Konno for a detailed discussion of a proof.
Since a proof has not yet appeared in the literature, we used a different, more specific argument in the previous proof that suffices for our purposes.  
    See also~\cite{linm2021family}*{Theorem~1.6} for a related statement on the Bauer-Furuta invariant.  
\end{remark}

\subsection{A diff-cork with nontrivial family monopole Floer cobordism map}\label{subsection:nontrivial-FSW}

In this section we prove the following result on the nonvanishing of a closely related monopole Floer family cobordism map. 

\begin{theorem}\label{Thm:FSW-body}
    There exists a compact, contractible, smooth 4-manifold $C$ and a diffeomorphism $g' \colon C \to C$ with $g'|_{\partial C}=\Id$, such that for any choice of isotopy of $g'$ to a diffeomorphism $g$ fixing a 4-ball $($see \cref{remark:dependence-on-choice-of-isotopy}$)$,   
    the family  cobordism map
     \[\widehat{HM}_*(\wt{W}_0, \widetilde{\mathfrak{s}_0}) \colon \wh{HM}_*(S^3) \to \wh{HM}_*(\partial C)\]
is nontrivial. 
    Here $W_0 := C \sm \mathring{D}^4$, $\wt{W}_0$ is the mapping torus of $g|_{W_0}$, and $\wt{\mathfrak{s}}_0$ is the family $\spin^c$ structure on $\wt{W}_0$ coming from restricting the unique  $\spin^c$ structure on $C$ and gluing using $g$.  
\end{theorem}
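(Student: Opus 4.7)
The plan is to combine the Diffeomorphism Cork Theorem (\cref{thm:one-eye-thm}) with Lin's gluing formula for the family Seiberg--Witten invariant (\cref{thm:lin-gluing}) applied to one of the examples from \cref{section:examples-1-stable}. Since the right-hand side of the gluing formula is a pairing of a homology and a cohomology class, any example with $\FSW \neq 0$ automatically forces the homology side (the family cobordism map) to be nonzero on $\widehat{1}$, and in particular nonzero as a map.

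More precisely, I would take $f\colon X \to X$ to be one of the Ruberman, Baraglia--Konno, or Auckly diffeomorphisms (\cref{Example:ruberman,Example:BK,Example:Auckly}) equipped with a family $\spin^c$ structure $\wt{\mathfrak{s}}$ with $d(\wt{X},\wt{\mathfrak{s}}) = 0$ and $\FSW(f,\wt{\mathfrak{s}}) \neq 0$; this nonvanishing is the Baraglia--Konno/Auckly computation that underpins those constructions. By \cref{thm:one-eye-thm}, $f$ is smoothly isotopic to a diffeomorphism $f'\colon X \to X$ supported on a compact contractible submanifold $C \subseteq X$. Set $g' := f'|_C \colon C \to C$, which fixes $\partial C$ pointwise. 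Since $\FSW$ is an isotopy invariant we have $\FSW(f',\wt{\mathfrak{s}}) = \FSW(f,\wt{\mathfrak{s}}) \neq 0$.

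Now fix any isotopy of $g'$ to a diffeomorphism $g$ that fixes some $4$-ball $\Delta \subseteq \mathring{C}$; as noted in \cref{remark:dependence-on-choice-of-isotopy}, the family cobordism map may depend on this choice, but the argument below works for any choice. This isotopy extends to an isotopy of $f'$, so we may assume $f'$ itself fixes $\Delta \subseteq \mathring{C} \subseteq \mathring{X}$. Writing $W_0 := C \sm \mathring{\Delta}$ (a cobordism from $S^3$ to $\partial C$) and $W_1 := X \sm \mathring{C}$ punctured by a further $4$-ball (a cobordism from $\partial C$ to $S^3$, after reversing orientation on $\partial W_1$), we get the decomposition $\wt{X} = \wt{W}_0 \cup_{\partial C \times S^1} W_1 \times S^1$ of the mapping torus of $f'$ into the mapping torus $\wt{W}_0$ of $g|_{W_0}$ and the trivial bundle $W_1 \times S^1$, precisely as required for Lin's formula. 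The hypothesis $b_2^+(W_1) > 1$ holds because $b_2^+(X) > 2$ in all of our examples and $C$ is contractible. Let $\wt{\mathfrak{s}}_0$ and $\mathfrak{s}_1$ denote the restrictions of $\wt{\mathfrak{s}}$.

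Applying \cref{thm:lin-gluing},
\[
0 \neq \FSW(f',\wt{\mathfrak{s}}) = \bigl\langle \widehat{HM}_*(\wt{W}_0,\wt{\mathfrak{s}}_0)(\widehat{1}),\ \vv{HM}^*(W_1,\mathfrak{s}_1)(\check{1}) \bigr\rangle.
\]
The nonvanishing of this pairing forces $\widehat{HM}_*(\wt{W}_0,\wt{\mathfrak{s}}_0)(\widehat{1}) \neq 0$, so the family cobordism map $\widehat{HM}_*(\wt{W}_0,\wt{\mathfrak{s}}_0)\colon \widehat{HM}_*(S^3) \to \widehat{HM}_*(\partial C)$ is nontrivial, proving the theorem. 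The main subtlety I anticipate is bookkeeping around the choice of isotopy used to fix a $4$-ball and the identification of $\wt{W}_0$ with the mapping torus of $g|_{W_0}$ (so that the map computed by the gluing formula is literally the one in the statement); once these identifications are made transparent, the argument reduces to the two-line computation above.
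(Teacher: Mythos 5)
Your proposal is correct and follows essentially the same route as the paper: apply the Diffeomorphism Cork Theorem to one of the Ruberman/Baraglia--Konno/Auckly examples with nonvanishing family Seiberg--Witten invariant, then feed the decomposition $\wt{X} = \wt{W}_0 \cup_{\partial C \times S^1} W_1 \times S^1$ into Lin's gluing formula and observe that a nonzero pairing forces the family cobordism map to be nonzero. The extra bookkeeping you flag around the choice of isotopy to fix a $4$-ball is precisely what the paper handles via \cref{remark:dependence-on-choice-of-isotopy}, and your treatment of it matches theirs.
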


\begin{proof}
    In \cref{Example:ruberman,Example:BK,Example:Auckly}, we observed that there exists, due to Ruberman~\cite{Ruberman-polynomial-invariant}, Baraglia-Konno~\cite{Baraglia-Konno}, Auckly~\cite{auckly:stable-surfaces}, and \cite{AKMR-stable-isotopy}*{Theorem~C}, a smooth, closed, simply-connected 4-manifold $X$ and a diffeomorphism $f \colon X \to X$ that becomes smoothly isotopic to the identity after a single stabilization with $S^2 \times S^2$. In fact there are many possible choices for $X$ and $f$.
    
    Thus by \cref{thm:one-eye-thm}, there exists a contractible codimension zero submanifold $C \subseteq X$ such that $f$ is smoothly isotopic to a diffeomorphism $f'$ supported on $C$. Let $g:= f'|_C \colon C\to C$. Baraglia-Konno~\cite{Baraglia-Konno}*{Theorem 9.7} proved that there exists a family $\spin^c$ structure~$\widetilde{\mathfrak{s}}$ on the mapping torus $\widetilde{X}$ of $f$ such that the virtual dimension $d(\widetilde{X},\widetilde{\mathfrak{s}})=0$ and such that~$\FSW(f, \widetilde{\mathfrak{s}}) \neq 0$, and hence $\FSW(f', \widetilde{\mathfrak{s}}) \neq 0$.  

 Let $\wt{W}_0$ and $\wt{\mathfrak{s}}_0$ be as in the statement of the theorem. Let $W_1 := X \sm (\mathring{C} \sqcup \mathring{D}^4)$ and let~$\mathfrak{s}_1$ denote the restriction of $\wt{\mathfrak{s}}$ to $W_1$.  Note that for these examples $b_2^+(X)>2$ and~$b_2^+(W_1) >1$.  
    \cref{thm:lin-gluing} implies that
     \[\FSW(f', \widetilde{\mathfrak{s}})= \langle 
    \widehat{HM}_*(\wt{W}_0, \widetilde{\mathfrak{s}}_0)(\widehat{1}), \vv{HM}^*(W_1,\mathfrak{s}_1)(\check{1})   \rangle.\]
    Since  $\FSW(f', \widetilde{\mathfrak{s}})\neq 0$, it follows  that the family cobordism map $\widehat{HM}_*(\wt{W}_0, \widetilde{\mathfrak{s}}_0)$ is nontrivial, as desired.
    \end{proof}

\begin{remark}
 One might attempt to cap off the examples from this theorem to create new examples of closed 4-manifolds with exotic diffeomorphisms, using \cref{thm:lin-gluing}. However, this capping-off process often leads to vanishing family Seiberg-Witten invariant. It would be intriguing to be able to realise this, or to make analogous arguments  using family Bauer-Furuta theory \cite{linm2021family}*{Theorem 1.8}, but we have not been able to achieve either.
\end{remark}

\section{Barbell diffeomorphisms}\label{section:generalised-barbell}

The aim of this section is to prove the following result, which contrasts with \cref{thm:inclusion-not-he}, since it gives an example of a 4-manifold where every exotic diffeomorphism (if any exist, which we do not know) can be isotoped to one supported in a 4-ball. We thank David Gabai for suggesting to us to try to prove this statement.

\barbellgenerates*

Before proving the theorem, we describe  barbell diffeomorphisms, and investigate their Poincar\'{e} variations. 
Let $x_1,\dots,x_n \in D^3$ be disjoint points in the interior and let $B_n$ be $D^3$ with $n$ disjoint open 3-balls, with centres the $x_i$, removed.  
After smoothing corners, $B_n \times I \cong \natural^n S^2\times D^2= X_n$.
We can think of $B_n \times I$ as obtained by removing disjoint open tubular neighborhoods of the  arcs $\gamma_i := x_i\times I \subseteq D^3 \times I$, for $i=1,\dots,n$.  We also note that $\partial X_n \cong \#^n S^2 \times S^1$. 
Consider $n$ pairwise disjoint arcs $d_1,\dots,d_n$ in $D^3$, where $d_i$ connects $x_i$ to $\partial D^3$.  Consider the discs $D_i := (d_i \cap B_n) \times I \subseteq B_n \times I \cong X_n$. The relative homology is $H_2(X_n, \partial X_n) \cong \Z^n$ generated by the classes $[D_i]$. Also $H_2(X_n) \cong \Z^n$, generated by the linking spheres $[S_i]$ to the arcs $\gamma_i$, i.e.\ $S_i$ is the boundary of the $i$th 3-ball removed from $D^3$ when forming $B_n$.

For $1 \leq i \neq j \leq n$, we now recall the barbell diffeomorphisms $\phi_{i,j} \colon X_n \to X_n$, defined by Budney and Gabai in \cite{Budney-Gabai}. Fill in the neighborhood of the $i$th arc $\gamma_i$ to obtain $B_{n-1} \times I$.   
Consider the loop  $\Upsilon_{i,j} \in \pi_1 (\Emb_{\partial}(I, B_{n-1} \times I),\gamma_i)$ obtained by taking $\gamma_i$ and lassooing the $j$th linking sphere $S_j$. The connecting homomorphism in the long exact sequence in homotopy groups of the fibration  
\[\Diff_{\partial}(B_{n} \times I) \to \Diff_{\partial}(B_{n-1} \times I) \xrightarrow{-\circ \gamma_i} \Emb_{\partial}(I, B_{n-1} \times I)\]
is a homomorphism $\delta \colon \pi_1 (\Emb_{\partial}(I, B_{n-1} \times I),\gamma_i) \to \pi_0 (\Diff_{\partial}(B_{n} \times I))$. 
It can be defined directly using the parametrized isotopy extension theorem.
We define 
\[\phi_{i,j} := \delta(\Upsilon_{i,j}) \colon X_n \to X_n.\] 
In $H_2(X_n)$ we have 
\[[D_i - \phi_{i,j}(D_i)] = [S_j],\,\, [D_j - \phi_{i,j}(D_j)] = -[S_i], \text{ and }
[D_k - \phi_{i,j}(D_k)] = 0\] 
for $k \geq i,j$.  
This determines the Poincar\'{e} variation in $\Hom(H_2(X_n,\partial X_n),H_2(X_n))$ associated with~$\phi_{i,j}$.  

For readers not familiar with it, let us recall some of the theory of Poincar\'{e} variations. 
Given a boundary-fixing homeomorphism $f \colon X \to X$ of a compact 4-manifold $X$, there is a \emph{Poincar\'{e} variation} \cite{Saeki}, which is represented by an element of $\Hom(H_2(X,\partial),H_2(X))$ given by $[y] \mapsto [y-f(y)]$. Saeki~\cite{Saeki} defined a group structure on a specified subset of $\Hom(H_2(X,\partial),H_2(X))$, giving the group of  Poincar\'{e} variations of $X$. We will not recall it here; see also \cite{Orson-Powell}.   If $f$ acts trivially on $H_2(X)$, then the group of Poincar\'{e} variations is isomorphic to  $\wedge^2 H_1(\partial X)^*$ i.e.\ the group of skew-symmetric forms $\kappa \colon H_1(\partial X) \times H_1(\partial X) \to \Z$.  
Let $\kappa^\dag$ denote the adjoint of $\kappa$. Then the Poincar\'{e} variation associated with $\kappa$ is the composite
\begin{equation}\label{eq:variation-composite}
       H_2(X,\partial X) \to H_1(\partial X) \xrightarrow{\kappa^\dag} H_1(\partial X)^* \xrightarrow{\operatorname{ev}^{-1}} H^1(\partial X) \xrightarrow{PD} H_2(\partial X) \to H_2(X).
\end{equation}
By \cite{Orson-Powell}, for compact simply-connected 4-manifolds with connected boundary, the topological boundary-fixing mapping class group $\pi_0 \Homeo_{\partial}(X)$ is isomorphic to the group of Poincar\'e variations.

\begin{proof}[Proof of \cref{thm:barbell-generate}]
In the case of $X_n$, since $H_2(\partial X_n) \to H_2(X_n)$ is onto, every boundary-fixing homeomorphism of $X_n$ acts trivially on $H_2(X_n)$, and hence the group of Poincar\'e variations, and as a consequence $\pi_0 \Homeo_{\partial}(X_n)$, is isomorphic to $\wedge^2 H_1(\partial X_n)^*$.  
One can check using \eqref{eq:variation-composite} that the variation of the barbell diffeomorphism $\phi_{i,j}$ is described by \[e_i \wedge e_j  \in \wedge^2 H^1(\partial X_n)^* \cong \wedge^2 \Z^n,\]
where $e_1,\dots,e_n$ are the standard generators of $H_1(\partial X_n)^* \cong \Z^n$.
It follows that the topological mapping class group $\pi_0 \Homeo_{\partial}(X_n) \cong \wedge^2 \Z^n$ is generated by the barbell diffeomorphisms  $\{\phi_{i,j}\}$, proving the last statement of the theorem.

Now let $f\in \ker (\pi_0\Diff_{\partial}(X_n) \to \pi_0 \Homeo_{\partial}(X_n))$.  
It follows that $f$ has trivial Poincar\'e variation. 
Using the Hurewicz theorem, this implies that $D_i$ is homotopic rel.\ boundary to $f(D_i)$ for $i=1,\dots,n$.   

Note that  $D_1$ and $f(D_1)$ have a common dual sphere in the boundary, $S^2 \times \{x\}$, for some  $x\in \partial D^2$. By the light bulb theorem~\cites{gabai:lightbulb,Kosanovic-Teichner},   
we obtain a smooth isotopy between $f(D_1)$ and $D_1$. By the isotopy extension theorem and the fact that boundary-fixing diffeomorphisms of the 2-disc are all isotopic rel.\ boundary to one another, we can isotope $f$ so that it fixes $D_1$ pointwise. By uniqueness of tubular neighborhoods, we can assume that $f$ fixes a tubular neighborhood of $D_1$ setwise.  For a disc in a 4-manifold with a framing of the normal bundle restricted to its boundary, if the framing extends to the entire disc then it does so essentially uniquely, because $\pi_2(O(2))=0$. Thus we can assume after a further isotopy that $f$ fixes a tubular neighborhood of $D_1$ pointwise.  

Cutting along this tubular neighborhood of $D_1$ leaves a diffeomorphism of $X_{n-1}$ restricting to the identity on the boundary. By induction, and writing $X_0 \cong D^4$, we obtain a diffeomorphism of $X_n$ that is supported on a 4-ball $D^4$, as desired.  
\end{proof}

\def\MR#1{}
\bibliography{bib}

\end{document}